\documentclass[11pt, reqno]{amsart}
\usepackage{amsmath}
\usepackage{amsthm}
\usepackage{amsfonts}
\usepackage{amssymb}
\usepackage{mathtools}
\usepackage{stackengine}
\usepackage[mathscr]{euscript}
\usepackage{bm}
\usepackage{hyperref}
\usepackage[table]{xcolor}
\usepackage[all,cmtip]{xy}
\usepackage{enumerate}
\usepackage{enumitem}
\usepackage{tikz}  
\usepackage{tikz-cd}
\usetikzlibrary{shapes}
\tikzstyle{dot}=[circle,draw,fill=black,inner sep=0pt, minimum width=2pt]
\usepackage{graphicx}
\usepackage{scalerel}
\usepackage{microtype}
\usepackage{verbatim}
\usepackage{libertine}
\usepackage[a4paper,top=3cm,bottom=2.5cm,left=3cm,right=3cm,marginparwidth=1.75cm]{geometry}
\usepackage{pgfplots}
\pgfplotsset{compat=1.15}
\usepackage{mathrsfs}
\usetikzlibrary{shapes.geometric, calc, arrows}
\usepackage[font=small,labelfont=bf]{caption}
\definecolor{xdxdff}{rgb}{0.49019607843137253,0.49019607843137253,1.}
\definecolor{uuuuuu}{rgb}{0.26666666666666666,0.26666666666666666,0.26666666666666666}
\definecolor{ududff}{rgb}{0.30196078431372547,0.30196078431372547,1.}
\definecolor{cite}{RGB}{44,123,182}
\definecolor{ref}{RGB}{215,25,28}
\theoremstyle{plain}
\newtheorem{thm}{Theorem}[section]
\newtheorem{cor}[thm]{Corollary}
\newtheorem{corol}[thm]{Corollary}

\newtheorem{lem}[thm]{Lemma}

\newtheorem{prop}[thm]{Proposition}

\newtheorem*{thm*}{Theorem}
\newtheorem*{corollary*}{Corollary}
\newtheorem*{lemma*}{Lemma}
\newtheorem*{ld*}{Lemma/Definition}
\newtheorem*{proposition*}{Proposition}

\theoremstyle{definition}
\newtheorem{dfn}[thm]{Definition}
\newtheorem{rem}[thm]{Remark}
\newtheorem{example}[thm]{Example}

\newtheorem*{def*}{Definition}
\newtheorem*{rem*}{Remark}
\newtheorem*{example*}{Example}
\newtheorem*{xca*}{Exercise}
\newtheorem*{claim*}{Claim}
\newtheorem*{fact*}{Fact}
\newtheorem*{notation*}{Notation}
\newtheorem*{construction*}{Construction}
\newtheorem*{ack*}{Acknowledgements}
\newtheorem*{que*}{Question}
\newtheorem*{problem*}{Problem}
\newtheorem*{con*}{Conjecture}
\newtheorem*{assumption*}{Assumption}
\newcommand{\PP}{\mathbb{P}}
\newcommand{\NN}{\mathbb{N}}
\newcommand{\CC}{\mathbb{C}}
\newcommand{\OO}{\mathcal{O}}
\newcommand{\Z}{\mathbb{Z}}
\newcommand{\ZZ}{\mathbb{Z}}
\newcommand{\QQ}{\mathbb{Q}}
\newcommand{\RR}{\mathbb{R}}

\newcommand{\HH}{\mathbb{H}}
\newcommand{\Ps}{\mathbb{P}}

\newcommand{\TT}{\mathbb{T}}
\newcommand{\ga}{\gamma}
\newcommand{\be}{\beta}
\newcommand{\Gm}{{\mathbb{C}^\times}}

\docsvlist{A,B,C,D,E,F,G,H,I,J,K,L,M,N,O,P,Q,R,S,T,U,V,W,X,Y,Z} 

\docsvlist{A,B,C,D,E,F,G,H,I,J,K,L,M,N,O,P,Q,R,S,T,U,V,W,X,Y,Z} 

\docsvlist{A,B,C,D,E,F,G,H,I,J,K,L,M,N,O,P,Q,R,S,T,U,V,W,X,Y,Z} 

\docsvlist{A,B,C,D,E,F,G,H,I,J,K,L,M,N,O,P,Q,R,S,T,U,V,W,X,Y,Z} 

\docsvlist{A,B,C,D,E,F,G,H,I,J,K,L,M,N,O,P,Q,R,S,T,U,V,W,X,Y,Z,
a,b,c,d,e,f,g,h,i,j,k,l,m,n,o,p,q,r,s,t,u,v,w,x,y,z} 

\DeclareMathOperator{\rk}{rk}

\DeclareMathOperator{\GL}{GL}

\DeclareMathOperator{\Spec}{Spec}

\DeclareMathOperator{\gr}{gr}

\newcommand{\bs}{}

\begin{document}
\title{Geometric realisation of hypergeometric local systems}

\author[A. Abdelraouf]{Asem Abdelraouf}
\address{AA: Kyiv School of Economics, Mykoly Shpaka St, 3, Kyiv, Ukraine, 02000} 
\email{aabdelraouf@kse.org.ua}

\author[G. Gugiatti]{Giulia Gugiatti}
\address{GG: School of Mathematics, University of Edinburgh, Edinburgh EH9 3FD, United Kingdom} 
\email{giulia.gugiatti@ed.ac.uk}

\subjclass[2020]{Primary 33Cxx, 
14D06,   
32S40; 	
secondary 14D07, 
32S30, 
14J33.} 
\keywords{}

\begin{abstract} 
We prove a realisation theorem for irreducible hypergeometric local systems defined over the rational numbers in terms of families of affine varieties in algebraic tori.
The families we consider have been studied extensively in the literature and appear in mirror symmetry. 
Our result holds unconditionally for families with one-dimensional or even-dimensional fibres. It holds under a monodromy assumption for families with fibres of odd dimension greater than one.
\end{abstract}
\maketitle

\setcounter{tocdepth}{1}
\tableofcontents
\section{Introduction}
Let $n$ be a positive integer, and let $\alpha=(\alpha_1,\dots,\alpha_n)$ and $\beta=(\beta_1,\dots,\beta_n)$ be two multisets of rational numbers. The hypergeometric differential operator associated to $\alpha,\beta$ is 
\begin{equation}
H=H(\alpha,\beta) \coloneqq \prod_{i=1}^n (\theta+\beta_i -1) - t \prod_{i=1}^n (\theta+\alpha_i)
\end{equation}
where $t$ is a coordinate on $\PP^1$ and $\theta:= t\frac{d}{dt}$. It is a differential operator on $\mathbb{P}^1$ with regular singularities at $0$, $1$, and $\infty$. The local system $\HH$ of solutions of $H \varphi=0$ is a complex local system on $U \coloneqq \mathbb{P}^{1}(\mathbb{C}) \setminus\{0,1, \infty\}$. We call such a local system a \emph{hypergeometric local system}. 

In this paper, we consider the subclass of irreducible hypergeometric local systems that are defined over $\QQ$.
In this case, $\alpha_i - \beta_j \notin \Z$ for all $i, j$. Moreover, the isomorphism class of $\HH$ can be given by a vector of nonzero integers $\gamma \in \Z^{l}$ with  $\gcd(\ga_1,\dots, \ga_l)=1$ and whose sum of entries is zero. 
The relationship between $\gamma$ and the parameters $\alpha, \beta$ is given by 

\begin{equation}
\label{eq:gamma-alpha-beta}
\frac{\prod_{\ga_j <0} \left( x^{-\ga_j} -1 \right) }{\prod_{\ga_j >0} \left( x^{\ga_j} -1 \right)} = \frac{ \prod_{j=1}^{n} (x- e^{2 \pi i \alpha_j} ) }{ \prod_{j=1}^{n} (x- e^{2 \pi i \beta_j} ) }
\end{equation}
\vspace{0.05 cm}

\noindent where irreducibility forces the numerator and denominator of the right-hand side of Equation \eqref{eq:gamma-alpha-beta} to have no common factors.

Associated to a vector $\gamma$ as above is a one-parameter family of affine varieties in $(\CC^\times)^{l-1}$, as follows.
Let $z_1, \dots, z_l$  be homogeneous coordinates on $(\CC^\times)^{l-1}$, and let $t$ be a coordinate on $ \CC^\times$.
Consider the affine variety $Z \subset (\CC^\times)^{l-1} \times \CC^\times$ defined by the homogeneous equations
\begin{equation}
     \begin{split}
    z_1+z_2+\dots +z_{l} &= 0 \\
    z_1^{\gamma_1}z_2^{\gamma_2} \cdots z_{l}^{\gamma_{l}}  &= \Gamma t
\end{split}
\end{equation}
where $ \Gamma\coloneqq \prod_{j=1}^l \gamma_j^{\gamma_j}$ and let $\pi \colon Z \to \CC^\times$ be the projection onto $t$. 
The fibres $Z_t \coloneqq \pi^{-1}(t)$ of the family $\pi$ are smooth unless $t=1$. We denote their dimension by $\kappa=l-3$.

The local system $\HH$ is expected to be realisable in terms of the family $(Z, \pi)$. More precisely, let $\pi_U$ be the restriction of $\pi$ to its regular locus $U$ and let $PR^\kappa \pi_{U !} \QQ$ be the sub-local system of the local system $R^\kappa \pi_{U !} \QQ$ consisting of primitive cohomology classes (see Section \ref{sec:local-systems} for details). Then, one expects an isomorphism of local systems over $U$:
\begin{equation}
\label{eq:H-(Z, pi)}
\HH    \simeq  \gr_\kappa^W PR^\kappa \pi_{U !}  \CC
\end{equation}
We expand on why \eqref{eq:H-(Z, pi)} is expected in Section \ref{sec:motivation-works}. 

In this paper, we approach \eqref{eq:H-(Z, pi)} by considering the local monodromy of the two local systems at $t=1$.
The local system $\HH$ has non-trivial local monodromy at $t=1$ (see Section \ref{sec:hgm-ls}). Hence, the isomorphism \eqref{eq:H-(Z, pi)} can hold only if $R^\kappa \pi_{U !}  \CC$ also has non-trivial local monodromy at $t=1$. 
Our first result shows that, if this is the case, then \eqref{eq:H-(Z, pi)} holds: 
\begin{thm}
\label{thm:non-trivial-isomorphism}
Suppose that the local system $R^\kappa \pi_{U !} \QQ$ has non-trivial local monodromy at $t=1$. 
Then the isomorphism \eqref{eq:H-(Z, pi)}
holds.
\end{thm}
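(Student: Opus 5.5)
The plan is to use the standard rigidity mechanism of Beukers--Heckman--Katz: an irreducible hypergeometric local system on $U$ is determined up to isomorphism by its local monodromies at $0$ and $\infty$, together with the fact that its local monodromy at $1$ is a pseudo-reflection. Equivalently, any irreducible local system $\mathbb{L}$ of rank $n$ on $U$ whose monodromy at $0$ (resp. $\infty$) has eigenvalues $e^{-2\pi i\beta_j}$ (resp. $e^{2\pi i\alpha_j}$) with the right Jordan structure, and whose monodromy at $1$ is a pseudo-reflection, is isomorphic to $\HH$. I will verify these conditions for $\mathbb{L}=\gr_\kappa^W PR^\kappa\pi_{U!}\CC$.

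\textbf{Step 1 (rank, local monodromy at $0,\infty$).} I compute $R^\kappa\pi_{U!}$ through the torus structure. The map $z\mapsto z^\gamma$ restricted to the hyperplane complement $\{z_1+\dots+z_l=0\}\subset(\CC^\times)^{l-1}$ fibres the space over $\CC^\times$, and the Euler characteristic and weight filtration of the fibres $Z_t$ follow from Newton-polytope and Danilov--Khovanskii computations. From these, $\gr^W_\kappa$ of the primitive part has rank $n$. For the local monodromy at $t=0,\infty$, I use the $\mu_N$-actions (with $N$ the lcm of the $|\gamma_j|$) that rescale the coordinates, or equivalently the known Gauss--Manin/GKZ description of period integrals $\int \omega/(z_1+\dots+z_l)$. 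These periods satisfy the GKZ system whose restriction to the $t$-line is $H(\alpha,\beta)$. This identifies the characteristic polynomials of the monodromy at $0$ and $\infty$ with the right-hand side of \eqref{eq:gamma-alpha-beta}, and it shows that the periods of $\gr^W_\kappa$ are annihilated by $H$ (modulo lower-weight pieces, which are killed in $\gr_\kappa^W$).

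\textbf{Step 2 (monodromy at $1$).} The singular fibre $Z_1$ has a single ordinary double point, at $z_j=\gamma_j$ up to scaling; this is the critical point of the restriction of $\log z^\gamma$ to the hyperplane. Picard--Lefschetz then gives that the local monodromy of $R^\kappa\pi_{U!}\QQ$ at $1$ is $T(x)=x\pm\langle x,\delta\rangle\delta$, with $\delta$ the vanishing cycle. In particular $T-1$ has rank at most one. The hypothesis says that $T\neq 1$, so $T$ is a pseudo-reflection. Since the vanishing cycle is a top-weight primitive class, it survives in $\gr^W_\kappa P$, and the induced monodromy there is again a nontrivial pseudo-reflection.

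\textbf{Step 3 (irreducibility and conclusion).} I expect this step to be the main obstacle: showing that $\mathbb{L}$ is irreducible, or at least that it has an irreducible constituent with the required monodromy. My approach is Levelt/Beukers--Heckman. Let $\mathbb{L}'$ be the smallest sub-local system containing $\delta$. It still has pseudo-reflection monodromy at $1$, and its monodromy at $0$ and $\infty$ have characteristic polynomials dividing the two sides of \eqref{eq:gamma-alpha-beta}. Since numerator and denominator share no common factor, Levelt's argument forces $\mathbb{L}'$ to have rank $n$, so $\mathbb{L}'=\mathbb{L}$ is irreducible and hypergeometric with the parameters $\alpha,\beta$. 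The delicate point is ruling out any extra invariant summand orthogonal to $\delta$, which the weight-graded rank count from Step 1 is meant to exclude. Beukers--Heckman rigidity then yields $\HH\simeq\mathbb{L}$.
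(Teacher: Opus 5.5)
There is a genuine gap in Step~1, and the rest of the argument depends on it. The GKZ system restricted to the $t$-line is not $H(\alpha,\beta)$. By Proposition~\ref{prop:reduc-of-GKZ} and Remark~\ref{rem:H-GKZ}, a monomial form is annihilated by the \emph{reducible} operator $H^{\mathrm{GKZ}}_\omega$ of order $\sum_{\gamma_i>0}\gamma_i$. Its local monodromies at $\infty$ and $0$ have the unreduced characteristic polynomials $\prod_{\gamma_i<0}(T^{-\gamma_i}-1)$ and $\prod_{\gamma_i>0}(T^{\gamma_i}-1)$; for $\gamma=(-5,-2,3,4)$ the order is $7$, against $\rk\HH=4$.

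So GKZ only tells you that the $n$ eigenvalues of the local monodromy of $\gr^W_\kappa PR^\kappa\pi_{U!}\CC$ at $0$ and at $\infty$ form \emph{some} sub-multisets of these root sets. It does not say which ones. You have therefore neither identified them with the roots of $q_0,q_\infty$ nor shown that weight-$\kappa$ periods are killed by $H(\alpha,\beta)$. The paper stresses that constructing such a period for general $\gamma$ seems out of reach, and the appeal to $\mu_N$-actions does not supply it.

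Step~3 cannot compensate. By Levelt, a sub-local system with pseudo-reflection monodromy at $1$ and characteristic polynomials dividing $q_\infty,q_0$ is simply hypergeometric with sub-multisets of the parameters. Coprimality of $q_\infty$ and $q_0$ does not force its rank to be $n$. Conversely, if you already knew the rank-$n$ system had characteristic polynomials exactly $q_\infty,q_0$, Levelt would give the result directly and $\mathbb{L}'$ would be unnecessary.

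The missing idea is how the paper pins down the eigenvalues:
\begin{enumerate}
\item Take a monomial form $\omega$ in the lowest weight piece $\mathcal{W}_{d+1}$ whose minimal operator $L$ has a genuine singularity at $t=1$.
\item Then $L$ right-divides $H=H^{\mathrm{GKZ}}_\omega$, so $\rho_L\subset\rho_H$.
\item Since $\rho_{H,1}$ is a pseudoreflection and $\rho_{L,1}\neq\mathrm{id}$, the quotient $\bar\rho=\rho_H/\rho_L$ has trivial monodromy at $1$. Hence $\bar\rho_\infty=\bar\rho_0^{-1}$.
\item So $\bar\rho$ can only use eigenvalues common to $\rho_{H,\infty}$ and $\rho_{H,0}^{-1}$, i.e.\ parameter pairs that cancel modulo $\ZZ$. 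This gives $\dim\bar\rho\le\mathrm{ord}\,H-n$, hence $\mathrm{ord}\,L\ge n$.
\item The rank count gives $\mathrm{ord}\,L\le\rk\mathcal{W}_{d+1}=n$.
\end{enumerate}
Only at this point are the local exponents of $L$ known to be those of $\HH$, and Levelt applies.

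Producing such an $\omega$ is also where your Step~2 is too quick. You assert, rather than prove, that the nontrivial monodromy at $1$ survives in $\gr^W_\kappa$ (``the vanishing cycle is a top-weight primitive class''). The paper shows that the graded pieces $\mathcal{W}_{d+k}/\mathcal{W}_{d+k-1}$, $k\ge 2$, are sums of rank-one modules $\CC(t)[\theta]/(\theta-c)$ (Proposition~\ref{lem:quotients-Gm}). A genuine singularity at $1$ can therefore be pushed down into $\mathcal{W}_{d+1}$ (Proposition~\ref{prop:coh-form-top}). You would also need the tameness coming from Theorem~\ref{thm:compactifying-delta} to run Picard--Lefschetz for the non-proper map $\pi$.
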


Our second result proves that the monodromy assumption of Theorem \ref{thm:non-trivial-isomorphism} holds for families  $(Z, \pi)$ of curves or of even-dimensional varieties: 
 
\begin{thm}
\label{thm:curves-even}
Assume that $(Z, \pi)$ is:
\begin{enumerate}
    \item a family of curves, or
    \item a family of even-dimensional varieties.
    \end{enumerate}
    Then the local monodromy of the local system $R^{\kappa} {\pi_U}_{ !}  \QQ$ at $t=1$ is non-trivial. 
\end{thm}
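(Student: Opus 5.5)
We will analyse the singular fibre $Z_1$ and apply the Picard--Lefschetz formalism. The first step is to locate the singularities of the total family near $t=1$. On the torus, $Z_t$ is the intersection of the hyperplane $\sum z_j=0$ with the level set $\prod z_j^{\gamma_j}=\Gamma t$. Critical points of $\pi$ on the hyperplane are obtained by Lagrange multipliers: $\gamma_j/z_j=\lambda$, so $z_j=\gamma_j/\lambda$. This point lies on the hyperplane because $\sum\gamma_j=0$, and at it the value of the monomial is $\prod \gamma_j^{\gamma_j}=\Gamma$, i.e. $t=1$. So $Z_1$ has a single singular point $p=[\gamma_1:\dots:\gamma_l]$. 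We then check that the Hessian of $\log\prod z_j^{\gamma_j}$ restricted to the hyperplane is nondegenerate. In affine coordinates it is the diagonal form $-\sum \gamma_j u_j^2/z_j^2$ restricted to $\sum u_j=0$, modulo scaling. Nondegeneracy is equivalent to $\sum_j z_j^2/\gamma_j=\sum_j \gamma_j/\lambda^2\neq 0$ failing appropriately; this is a short linear-algebra computation using $\sum\gamma_j=0$ and all $\gamma_j\ne0$. Hence $p$ is an ordinary double point (Morse singularity) of $\pi$.

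Next we apply the Picard--Lefschetz formula for compactly supported cohomology. This step requires care because $\pi$ is not proper; the fibres are affine and noncompact. The plan is to compactify the family, e.g. by closing in a toric compactification of $(\CC^\times)^{l-1}$ adapted to the Newton polytope, and to check that near $t=1$ the family is topologically locally trivial at infinity. The check consists of showing that the boundary strata (intersections with the lower-dimensional tori, which are families of the same type for sub-vectors, or simply fibres independent of $t$) are transverse to the fibres of $t$ near $t=1$. Given this, the local monodromy $T$ on $H^\kappa_c(Z_t)$ (dually on $H_\kappa$) is
\[
T(x)=x\pm\langle x,\delta\rangle\,\delta ,
\]
where $\delta$ is the vanishing cycle of the node. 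The Picard--Lefschetz formula works on $H_\kappa(Z_t)$ with pairing against $H_\kappa^{c}$; we will state it for the pair (vanishing cycle in $H_\kappa$, intersection with $H^{lf}_\kappa$) and transfer it by Poincaré duality.

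Finally, monodromy is nontrivial if and only if the vanishing cycle $\delta$ pairs nontrivially with some locally finite cycle. In case (2), $\kappa$ is even and $\delta\cdot\delta=\pm2\neq0$. Moreover $\delta$ is a compact cycle, so it defines a class in $H^{lf}_\kappa$ as well. Hence $T\delta=\delta\mp2\delta\ne\delta$ and the monodromy is nontrivial, provided $\delta\ne0$ in $H_\kappa$; this follows because its self-intersection is nonzero. In case (1), $\kappa=1$ and $\delta\cdot\delta=0$, so we must instead produce a locally finite $1$-cycle meeting $\delta$ once. The vanishing circle on the punctured curve $Z_t$ is non-separating, or at least separates the punctures nontrivially. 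We would show this either by computing the genus and puncture count of $Z_t$ for $l=4$ and comparing with $Z_1$ via the Euler characteristic ($\chi(Z_1)=\chi(Z_t)+1$), or by exhibiting an arc between punctures crossing $\delta$ once. The main obstacle is this curve case: one must rule out that $\delta$ is null-homologous in $H_1$ of the open curve. The plan there is to use the real structure. When the $\gamma_j$ and $t$ are real, the vanishing circle is approximated by the real locus near $p$, and a real arc of $Z_t(\RR)$ running between two punctures crosses it transversally once. The argument checks that this arc exists using the explicit sign pattern of $\gamma$.
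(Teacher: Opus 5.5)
Your even-dimensional argument works and is a legitimate variant of the paper's. Once Picard--Lefschetz is justified on $H^\kappa_c(Z_t)\cong H_\kappa(Z_t)$ by tameness at infinity, the identity $\delta\cdot\delta=\pm2$ gives $T\delta=-\delta\neq\delta$. The tameness is supplied by the relative normal crossing compactification of Theorem \ref{thm:compactifying-delta}, as you anticipate. This is the same information the paper extracts from the Milnor monodromy $-\mathrm{id}$ in the sequence of Proposition \ref{pro:pro-sequence}. Your Hessian sentence is garbled, since $\sum_j\gamma_j/\lambda^2=0$ always; the real point is that the radical of $\sum_j u_j^2/\gamma_j$ on $\{\sum_j u_j=0\}$ is exactly the scaling line spanned by $\gamma$.

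The curve case, however, has a genuine gap, and it starts with your criterion.

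\begin{itemize}
\item The stalk of $R^\kappa\pi_{U!}\QQ$ is $H^\kappa_c(Z_t)\cong H_\kappa(Z_t)$, where $T(x)=x\pm\langle x,\delta\rangle\delta$ for \emph{compact} $x$. So $T\neq\mathrm{id}$ if and only if $\delta$ pairs nontrivially with some compact cycle, equivalently the image of $\delta$ in $H^{lf}_\kappa(Z_t)$ is nonzero.
\item Pairing nontrivially with a locally finite cycle only shows $\delta\neq0$ in $H_\kappa(Z_t)$, which is strictly weaker.
\item For curves, the radical of the intersection form on $H_1(Z_t)$ is spanned by the loops around punctures. So what you must prove is that $\delta$ is non-separating on the smooth compactification $\bar Z_t$.
\item Your fallback ``or at least separates the punctures nontrivially'' is precisely the case of trivial monodromy. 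Then $Z_1$ has two components through the node, $\mathrm{can}=0$, and $H^1_c(Z_1)\to H^1_c(Z_t)$ is a monodromy-equivariant isomorphism.
\item Consequently, a real arc between two punctures crossing $\delta$ once proves nothing about $T$. The identity $\chi(Z_1)=\chi(Z_t)+1$ holds for every one-node degeneration, separating or not, so the Euler characteristic comparison cannot decide either.
\end{itemize}

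This is not hypothetical. Take the prime, non-reduced vector $\gamma=(-2,-1,1,2)$. Then $Z_t\cong\PP^1$ minus five points. $Z_1$ is a $\CC^\times$ and a thrice-punctured $\PP^1$ meeting at the node, so $\delta$ separates two punctures from three. It is therefore nonzero in $H_1(Z_t)$, and an arc joining the two sides meets it once. Yet the monodromy is trivial, because the intersection form on a punctured sphere vanishes.

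The missing ingredient is the irreducibility of $Z_1$ and $Z_t$, Proposition \ref{prop:irreduciblity}. Irreducibility of $Z_1$ is equivalent to $\delta$ being non-separating, to $\mathrm{can}\neq0$, and to $H^2_c(Z_1)\to H^2_c(Z_t)$ being an isomorphism (both sides have rank one). The paper then concludes with Corollary \ref{cor:if-iso}. This is where the real work of case (1) lies, and it is not soft:

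\begin{itemize}
\item When $\Delta$ is a triangle, the paper uses Ostrowski's theorem together with the fact that a triangle has only homothetic Minkowski summands. That is excluded because the $m_j-m_1$ span $\ZZ^2$.
\item When $\Delta$ is a quadrilateral, the paper passes to an étale cover $h_1(y_1)+h_2(y_2)=0$, with $h_1,h_2$ indecomposable binomials of different degrees. It then applies a Fried-type irreducibility theorem. This step uses $\gamma_i+\gamma_j\neq0$, which is exactly what fails in the example above.
\end{itemize}

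Your proposal has no substitute for this step.
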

Therefore, if $(Z, \pi)$ is a family of curves or of even-dimensional varieties, then \eqref{eq:H-(Z, pi)} holds.

Directly verifying the monodromy assumption of Theorem \ref{thm:non-trivial-isomorphism} for families $(Z, \pi)$ of varieties of odd dimension $\kappa>1$ is an interesting problem on its own and requires new ideas.

We sketch the main steps of the proofs of the two theorems in Section \ref{sec:notes-proof}. 

\subsection{Motivation and relation to other works} \label{sec:motivation-works}
Irreducible hypergeometric local systems are motivic by the work of André and Katz \cite{Andre-G-functions, Katz}. 
This means that they arise as subquotients of the variation of cohomology of families of algebraic varieties.
When defined over $\QQ$, they also support a pure rational variation of Hodge structure (VHS). 
This was first conjectured by Corti and Golyshev \cite{AG} and then proven by Fedorov \cite{Fedorov}.

The isomorphism \eqref{eq:H-(Z, pi)} considered in this paper yields a realisation of $\HH$ in terms of the family $(Z, \pi)$. This is particularly appealing as, unlike previous results, \eqref{eq:H-(Z, pi)} identifies $\HH$ with the (pure) variation of the \emph{entire top graded weighted piece} of the middle compactly supported cohomology of the family. 

The family $(Z, \pi)$ and its connection to the parameters $\alpha, \beta$ appear extensively in the literature. 
The family is a specialisation of a family of Gelfand--Kapranov--Zelevinsky (GKZ)'s circuits \cite{GKZ}. 
The work \cite{BCM} relates the finite analogs of the hypergeometric function \mbox{$F(\alpha, \beta\mid t)$} (see \cite[Definition 1.1]{BCM})  with the point count over finite fields of a suitable fibrewise compactification of $(Z, \pi)$. Their result can be regarded as
the arithmetic counterpart of isomorphism \eqref{eq:H-(Z, pi)} at the level of point count. 
Further, in \cite{FRV-duke}, Rodriguez Villegas showed that the Hodge numbers of the stalks of $\gr_\kappa^W PR^\kappa \pi_{U !} \CC$ agree with those of $\HH$ as given by the formula in \cite{Fedorov} (see Section \ref{sec:hodge}), providing further evidence for \eqref{eq:H-(Z, pi)}.  
The paper \cite{RV} develops a systematic framework for ``hypergeometric motives" defined over $\QQ$, which unifies these aspects.
Finally, we conclude this discussion by mentioning the connection to mirror symmetry. We focus on the case of Fano varieties.

To a quasismooth and wellformed Fano weighted complete intersection $X=X_{d_1, \dots, d_c} \subset \PP(a_1, \dots, a_m)$ of Fano index $\lambda=\sum a_i -\sum d_j$ and dimension $n$, one can associate the gamma vector
\[
\gamma_X=(-d_1, \dots, -d_c, -\lambda, a_1, \dots, a_m)
\]
On the one hand, a specialisation of the regularised quantum operator of $X$ is expected to be the pullback along $t=s^{-\lambda}$ of the hypergeometric operator $H$ associated to $\gamma_X$. We refer the reader to \cite{T-stacks, Wang, AlessioG} for known results in this direction. 
On the other hand,  the Givental's  $(n+c)$-dimensional Landau--Ginzburg model $(Y,w)$ associated to $X$ \cite{GIV3} is the pullback along $t=s^{-\lambda}$ of the family $(Z, \pi)$ associated $\gamma_X$. 
Therefore, for gamma vectors arising from Fano weighted complete intersections, the isomorphism \eqref{eq:H-(Z, pi)} can be viewed as a Hodge--theoretic mirror statement relating $X$ and $(Y,w)$. 

The first explicit formulation of the isomorphism \eqref{eq:H-(Z, pi)} is in the aforementioned paper \cite{AG}, which is motivated by mirror symmetry.
In the language of the present paper, the authors of \cite{AG} prove that, for vectors $\gamma=(\gamma_1, \dots, \gamma_l)$ with only one negative entry $\gamma_1$  and such that 
\begin{equation}  \label{eq:wellformed}   \gcd(\gamma_2, \dots ,\widehat{\gamma}_j ,\dots, \gamma_l)=1 \quad \text{ for } j=2, \dots,l \end{equation}
the isomorphism \eqref{eq:H-(Z, pi)} holds \cite[Theorem 1.1]{AG}. 
The restriction \eqref{eq:wellformed} is taken to ensure that $\gamma$ defines a well-formed weighted projective space $\PP=\PP(\gamma_2, \dots, \gamma_l)$.
The proof of \cite[Theorem 1.1]{AG} is based on previous work by Golyshev \cite{Vasily} and a computation of the Hodge numbers of $\gr_\kappa^W PR^\kappa \pi_{U !} \CC$
in terms of the geometry of the affine anticanonical cone of $\PP$. 

In Section \ref{sec:multinomial}, we give a direct proof of  \eqref{eq:H-(Z, pi)} for any gamma vector $\gamma$ such that $\pm \gamma$ has only one negative entry (Theorem \ref{thm:one-negative-gamma})
by exhibiting a non-trivial period of $(Z, \pi)$ 
which is annihilated by the operator $H$. 
Proving the existence of such a period relies on $\gamma$ having only one negative entry. Showing the same for a general gamma vector seems out of reach.  

We note that hypergeometric differential operators associated to general gamma vectors include classical examples, such as the operators annihilating the hypergeometric series 
\[
\sum_{n=0}^\infty \frac{(2n)!^2}{(n!)^4!}\left(\frac{t}{16}\right)^n
\quad\text{or}\quad
\sum_{n=0}^\infty
\frac{(30n)!\,n!}{(6n)!(10n)!(15n)!}
\left(\frac{6^6 10^{10} 15^{15}}{30^{30}}\, t\right)^n 
\]
We also note that, for a general gamma vector, we do not expect the hypergeometric function defined by the corresponding parameters to be a period of the family $(Z,\pi)$. This observation motivates our focus on local systems.  

We conclude by mentioning that, prior to the present paper, the isomorphism \eqref{eq:H-(Z, pi)} was proven \emph{ad hoc} for special cases of gamma vectors with two and three negative entries arising from mirror symmetry \cite{Orbi, AlessioG, ThesisG, ACGGFRV}.

\subsection{Sketch of the proofs}
\label{sec:notes-proof}
As mentioned above, for a general gamma vector, it does not seem possible to construct a period of the family $(Z,\pi)$ which is annihilated by the operator $H$. To prove \eqref{eq:H-(Z, pi)}, we take a different approach. 

To establish Theorem \ref{thm:non-trivial-isomorphism}, we prove that, under the monodromy assumption in the statement, the minimal differential operator of a non-trivial relative $\kappa$-form of weight $\kappa$ is equivalent to $H$. 
This is accomplished in several steps. First, via the theory of GKZ differential systems, we show that distinguished relative forms associated to the family are solutions of \emph{reducible} hypergeometric operators.
This implies that the minimal differential operators of such forms appear as factors of these reducible operators. Next, by a novel analysis of the weight filtration, we show that there exists a relative form of the lowest weight whose minimal differential operator has a genuine singularity at $t=1$. 
Combining these two steps and the equality of ranks of the two sides, we obtain the statement. 

To prove Theorem \ref{thm:curves-even}, we study the singularity of the fibre \(Z_1\). Using the formalism of nearby and vanishing cycles, we obtain the following specialisation sequence in compactly supported cohomology
\begin{equation*}
0 \to H^{d-1}_c(Z_1, \QQ) \to H^{d-1}_c(Z_t, \QQ) \to \QQ \to H^d_c(Z_1, \QQ) \to H^d_c(Z_t, \QQ) \to 0
\end{equation*}
which is compatible with monodromy. Moreover, the action of monodromy on $H^{d-1}_c(Z_t, \QQ)$ is non-trivial if and only if the map $H^d_c(Z_1, \QQ) \to H^d_c(Z_t, \QQ)$ is an isomorphism. 
When $Z_t$ is even-dimensional, we obtain the isomorphism using the fact that the action of monodromy on the Milnor fiber $\QQ$ is non-trivial.
When $Z_t$ is one-dimensional, we obtain the isomorphism from the irreducibility of $Z_1$ and $Z_t$, a non-obvious fact which we prove in Proposition \ref{prop:irreduciblity}.

\subsection*{Structure of the paper}
In Section \ref{sec:families}, we recall general facts about families of affine hypersurfaces in algebraic tori. One of the main goals of the section is to prove that, under a regularity assumption, one can build well-behaved fibrewise compactifications of the family. 
In Section \ref{sec:set-up}, we set up our notation for hypergeometric local systems and the family $(Z, \pi)$, and prove several facts about $(Z, \pi)$ which we use in the proofs of the theorems. 
In Section \ref{sec:multinomial}, we give a direct proof of \eqref{eq:H-(Z, pi)} for gamma vectors with one negative entry. 
In Section \ref{sec:proof-thm-1}, we prove Theorem \ref{thm:non-trivial-isomorphism}. 
In Section \ref{sec:proof-thm-2}, we prove Theorem \ref{thm:curves-even}.

\subsection*{Acknowledgments}
We thank Alessio Corti and Fernando Rodriguez Villegas for attracting our attention to this problem and for many inspiring conversations.
We thank Vadym Kurylenko and Matt Kerr for many helpful discussions. 
We thank Will Sawin for helpful email exchanges regarding nearby and vanishing cycles. 
AA was supported by the Simons Foundation through Award 284558FY19 to the ICTP.
GG acknowledges support from a Simons Investigation Award (n. 929034).

\section{Families of affine hypersurfaces in algebraic tori}
\label{sec:families}
In this section, we study families of affine hypersurfaces in algebraic tori.  

Section \ref{sec:hypersurfaces} collects some standard facts about such hypersurfaces, with \cite{Batyrev-tori} as our main reference. We also introduce a slight generalisation of the classical notion of $\Delta$-regularity \cite[Definition 3.3]{Batyrev-tori}, which we call quasi-$\Delta$-regularity (Definition \ref{def:quasi-Delta}).

In Section \ref{subsec:families}, we turn to families of such hypersurfaces. We first show that families of quasi-$\Delta$-regular hypersurfaces admit well-behaved compactifications (Theorem \ref{thm:compactifying-delta}).
We then focus on the locus of $\Delta$-regularity and collect some key facts about relative cohomology and local systems over the base of the family.

\subsection{Affine hypersurfaces in algebraic tori}
\label{sec:hypersurfaces}
Let $d \geq 1$ be a positive integer. 
Let $L\coloneqq \CC[x_1^{\pm 1},\dots,x_d^{\pm 1}]$ be the ring of Laurent polynomials in $d$ variables over $\CC$. 
We write $x\coloneqq (x_1, \dots, x_d)$ and $x^m \coloneqq x_1^{m_1} \cdots x_d^{m_d}$ for any  $m=(m_1, \dots, m_d) \in \ZZ^d$. Given $f \in L$, we write $Z_f$ for the zero locus of $f$ in the algebraic torus $\TT^d=\Spec L$. 

Let $f = \sum a_m x^{m} \in L$ be a Laurent polynomial, and suppose that $Z_f \subset \TT^{d}$ is a hypersurface.

\subsubsection{Quasi--$\Delta$-regularity}
\label{sec:regularity}
Let $\mathrm{Newt}(f)$ be the Newton polytope of $f$. 
For $F$ a face of $\mathrm{Newt}(f)$, denote by $f_{|F}$ the Laurent polynomial given by the restriction of $f$ to $F$, namely,
\begin{equation}
\label{eq:f-F}
f_{|F} \coloneqq \sum_{m \in F} a_m x^m
\end{equation}

Let $\Delta \subset \RR^d$ be 
a lattice polytope. 
\begin{dfn}{\cite[Definition 3.3]{Batyrev-tori}} 
\label{def:Delta} The polynomial $f$  (and the  
hypersurface $Z_f$) is called \emph{$\Delta$-regular} if $\Delta=\mathrm{Newt}(f)$ and, for each face $F$ of $\Delta$ 
of positive dimension, 
the zero locus $Z_{f_{|F}} \subset \TT^d$ is smooth.  
\end{dfn}

\begin{rem}
Note that, if $\Delta=\mathrm{Newt}(f)$, then, for any face $F$ of $\Delta$ 
of positive dimension, $Z_{f_{|F}}$ is a hypersurface. 
Moreover, if $f$ is $\Delta$-regular, then $Z_f$ is smooth. 
Since, by definition, $f$ can be $\Delta$-regular only with respect to $\Delta=\mathrm{Newt}(f)$, one often drops the reference to $\Delta$ and simply says that $f$ is regular. We use the term $\Delta$-regular to align with \cite{Batyrev-tori}. 
\end{rem}

We introduce a slight generalisation of $\Delta$-regularity: 

\begin{dfn} 
\label{def:quasi-Delta}
We say that  $f$ (and $Z_f$) is \emph{quasi-$\Delta$-regular} if  $\mathrm{Newt}(f)=\Delta$, and, for each \emph{proper} face $F$ of $\Delta$  of positive dimension,
$Z_{f_{|F}} \subset \TT^d$ is smooth. 
\end{dfn}

\begin{rem}
Note that $Z_f$ is $\Delta$-regular if and only if it is quasi-$\Delta$-regular and smooth.
\end{rem}

From now on, we set $\Delta=\mathrm{Newt}(f)$. Moreover, we assume for simplicity that $\Delta$ is full-dimensional: if $\dim(\Delta)=d^\prime < d$, then $Z_f$ is a product  $\TT^{d-d^\prime} \times (Z_{f^\prime} \subset \TT^{d^\prime})$, where $f^\prime$ is $f$ viewed as a polynomial in  $d^\prime$ variables, which brings us back to the full-dimensional case. 
We conclude the section by recalling an alternative characterisation of (quasi-)$\Delta$-regularity. 

It is well known that $\Delta$ determines a proper toric variety $ \mathbb{P}_\Delta$ of dimension $d$ which contains the torus $\TT^d$. We denote by $\bar{Z}_f \subset \mathbb{P}_\Delta$ the closure of $Z_f$ in $\mathbb{P}_\Delta$. The toric variety $\mathbb{P}_\Delta$ is a compactification of  $\TT^d$ by algebraic tori $\TT_F$ which are in one-to-one correspondence with the faces $F$ of $\Delta$. 
The dimension of the torus $\TT_F$  equals the dimension of $F$. The tori $\TT_F$ form a partition of $\mathbb{P}_{\Delta}$:
\begin{equation}
\label{eq:TF}
\Ps_{\Delta}= \bigcup_{F \preceq \Delta } \TT_F
\end{equation}
For each face $F$ of dimension $\ell>0$, the hypersurface $Z_{f_{|F}} \subset \TT^d$ is a product of a torus dimension $d-\ell$ and a hypersurface in an $\ell$-dimensional torus which is isomorphic to  $\bar{Z}_f \cap \ \TT_F \subset \TT_F$. 
Then $f$ is (quasi-)$\Delta$-regular if and only if, for each (proper) face $F$ of $\Delta$ of positive dimension, the hypersurface $\bar{Z}_f \cap \  \TT_F \subset \TT_F$ is smooth.
For further details, see \cite[Sections 2.2 and 3.6] {DK} or \cite[Remark 4.5 and Proposition 4.6]{Batyrev-tori}. 

\begin{rem}
\label{rem:Pdelta-Zf-properties}
The toric variety $\mathbb{P}_\Delta$ is, in general, not smooth. It is if and only if the normal fan $\Sigma_\Delta$ of $\Delta$ is smooth. 
Moreover, the smoothness of $Z_f$ does not, in general, imply that $\bar{Z}_f$ is smooth. However, if $f$ is $\Delta$-regular and $\mathbb{P}_\Delta$ is smooth, then $\bar{Z}_f$ is also smooth. 
More precisely, if $f$ is $\Delta$-regular and $\mathbb{P}_\Delta$ is smooth at a point $p \in \bar{Z}_f$, then $\bar{Z}_f$ is also smooth at $p$. This follows from the description of $\bar{Z}_f$ above and \cite[Lemma 2.3]{DL}. 

More generally, the fan $\Sigma_\Delta$ always admits a smooth refinement $\Sigma$. 
If we denote by $\mathbb{P}$ the smooth toric variety associated to the fan $\Sigma$, then the induced toric morphism $\mathbb{P} \to \mathbb{P}_\Delta$ is a resolution of singularities. 
The closure $V_f\subset \mathbb{P}$ of $Z_f$ has a similar description to the one given above for $\bar{Z}_f \subset \mathbb{P}_\Delta$ (we refer to \cite[Section 3.6]{DK}, or \cite[Section 2.5]{DL}, for more details). 
If $f$ is $\Delta$-regular, then $V_f$ is smooth. Moreover, the boundary divisor $V_f \setminus Z_f$ is a normal crossing divisor. Indeed, since $\Sigma$ is smooth, the boundary divisor $\mathbb{P} \setminus \TT^d$ is a normal crossing divisor and $V_f \setminus Z_f$ is (by $\Delta$-regularity) the transverse intersection of $V_f$ and $\mathbb{P} \setminus \TT^d$. 

Note that if $f$ is quasi-$\Delta$-regular but not $\Delta$-regular, then $V_f$ is no longer smooth, but its singular locus is contained in $Z_f$, and the boundary divisor $V_f \setminus Z_f$ is still a normal crossing divisor.  
\end{rem}

\begin{rem}
The notion of quasi-$\Delta$-regularity is not standard and is introduced for our purposes. Starting from Section \ref{subsec:families}, we will consider families of affine hypersurfaces in algebraic tori that have singular members. By restricting to the locus of quasi-$\Delta$-regularity, the family can be compactified by a relative normal crossing divisor, without introducing singularities in its members (see Theorem \ref{thm:compactifying-delta}). 
\end{rem}

\begin{rem}
\label{rem:ring}
The content of this section remains unchanged if, in the definition of $L$, we replace the field $\mathbb{C}$ with a commutative ring $A$: the torus $\TT^d$ becomes $\Spec A[x_1^{\pm 1},\dots, x_{d}^{\pm 1}]$, and smoothness is interpreted as smoothness over $\Spec A$ \cite{DL}. 
We will require this generality in the proof of Theorem \ref{thm:compactifying-delta}. When $A$ is not specified, we assume $A=\CC$.
\end{rem}

\begin{example}
\label{exa:curve-ft}
Let $d=2$. For $t \in \CC^\times$ fixed, consider the polynomial 
\begin{equation*}
\label{eq:curve}
f_t=-5x_1^2 -2t  x_2^3+ 3tx_1^2x_2^2+4x_1
\end{equation*}
The Newton polytope of $f_t$ is the quadrilateral $\Delta$ in Figure \ref{fig:curve}. One checks that, if $t \neq 1$, the curve $Z_{f_t}$ is $\Delta$-regular. 
The curve $Z_{f_1}$ has an ordinary double point (ODP) at $(1,1)$ and is quasi-$\Delta$-regular.

The toric surface $\mathbb{P}_\Delta$ has three singular points (of type $1/5{(1,3)}, 1/3{(1,1)},$ and $ 1/2{(1,1)}$), corresponding to three of the vertices of $\Delta$. 
For all $t \in \CC^\times$, the boundary $\bar{Z}_{f_t} \setminus Z_{f_t}$ consists of five points.
The curve $\bar{Z}_{f_t}$ does not contain the points of $\mathbb{P}_\Delta$ corresponding to the vertices of $\Delta$ and is smooth at the points of the boundary. 
Thus, if $t \neq 1$, $\bar{Z}_{f_t}$ is smooth, while, if $t=1$, $\bar{Z}_{f_t}$ has a unique ODP at $(1,1)$.
\end{example}

\begin{example}
    \label{exa:nonexa}
    Let $d=3$. Consider the polynomial
    \[
    f=((y-1)^2-(x-1)^3)z-1
    \]
    The Newton polytope of $f$ is the $3$-dimensional simplex $\Delta$ in Figure \ref{fig:nonexa}. 
    Let $F$ be the face of $\Delta$ at height $1$. 
    Then $Z_{f_{|F}} \subset \TT^3$ is 
    the product \[
    \left(((y-1)^2-(x-1)^3=0) \subset \TT^2 \right)\times \TT^1 
    \]
    and the first factor has a cusp at $(1,1)$. Therefore, $f$ is not $\Delta$-regular.
\end{example}

\begin{example} 
\label{exa:chebyshev-ft}
Let $d=3$. For $t \in \CC^\times$ fixed, consider the polynomial 
\begin{equation*}
\label{eq:chebyshev}
f_t=-30 x_1x_2x_3 -\frac{1}{t}+6x_1^5+10 x_2^3 +15 x_3^2
\end{equation*}
The Newton polytope of $f_t$ is the  
$3$-dimensional polytope $\Delta$ in Figure \ref{fig:chebyshev}. One checks that, if $t \neq 1$, the surface $Z_{f_t}$ is $\Delta$-regular. The surface $Z_{f_1}$ has an ODP at $(1,1,1)$ and is quasi-$\Delta$-regular.
\end{example}

\begin{figure}[ht!]
\centering
\begin{tikzpicture}[line cap=round,line join=round,>=triangle 45,x=1cm,y=1cm, scale=0.8]

\clip(-5.755777414580859,-0.9016706575092704) rectangle (7.7258437969507785,3.5492232670187605);
\draw [line width=1pt] (0,3)-- (1,0);
\draw [line width=1pt] (2,0)-- (1,0);
\draw [line width=1pt] (2,2)-- (2,0);
\draw [line width=1pt] (0,3)-- (2,2);
\begin{scriptsize}
\draw [fill=black] (2,0) circle (2.5pt) node[below right] {$(2,0)$}; 
\draw [fill=black] (1,0) circle (2.5pt) node[below left] {$(1,0)$};
\draw [fill=black] (0,3) circle (2.5pt) node[above left] {$(0,3)$};
\draw [fill=black] (2,2) circle (2.5pt) node[below right] {$(2,2)$};
\end{scriptsize}
\end{tikzpicture}
\caption{\label{fig:curve} The Newton polytope of the polynomial $f_t$ in Example \ref{exa:curve-ft}.}
\end{figure}
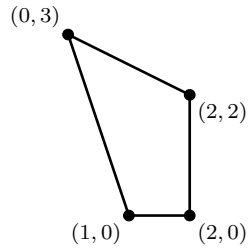

\begin{figure}[ht!]
\centering
\begin{tikzpicture}[line cap=round,line join=round,>=triangle 45,x=1cm,y=1cm, scale=1.1]
\clip(-6.32,-0.25) rectangle (7.32,3.25);
\draw [line width=1pt] (-0.68,1.53)-- (1.3,1.55);
\draw [line width=1pt] (0.1,2.13)-- (1.3,1.55);
\draw [line width=1pt] (-0.68,1.53)-- (0.1,2.13);
\draw [line width=1pt] (-0.68,1.53)-- (-0.72,0.33);
\draw [line width=1pt] (-0.72,0.33)-- (1.3,1.55);
\draw [line width=1pt,dotted] (-0.72,0.33)-- (0.1,2.13);
\begin{scriptsize}
\draw [fill=black] (-0.68,1.53) circle (2.5pt)
node[above left] {$(0,0,1)$};
\draw [fill=black] (1.3,1.55) circle (2.5pt) 
node[above right] {$(3,0,1)$};
\draw [fill=black] (0.1,2.13) circle (2.5pt)
node[above right] {$(0,2,1)$};
\draw [fill=black] (-0.72,0.33) circle (2.5pt) node[below  right] {$(0,0,0)$};
\end{scriptsize}
\end{tikzpicture}
\caption{\label{fig:nonexa} The Newton polytope of the polynomial $f$ in Example \ref{exa:nonexa}.}
\end{figure}

\begin{figure}[ht!]
\centering
\begin{tikzpicture}[line cap=round,line join=round,>=triangle 45,x=1cm,y=1cm, scale=1]
\clip(-3.32,-2.70) rectangle (3.32,1.90);
\draw [line width=1pt] (-0.86,1.27)-- (-0.78,-0.67);
\draw [line width=1pt] (-0.78,-0.67)-- (1.68,-2.23);
\draw [line width=1pt] (1.68,-2.23)-- (1.98,0.13);
\draw [line width=1pt] (-0.86,1.27)-- (1.68,-2.23);
\draw [line width=1pt] (-0.86,1.27)-- (1.98,0.13);
\draw [line width=1pt] (0.48,0.37)-- (1.68,-2.23);
\draw [line width=1pt] (0.48,0.37)-- (1.98,0.13);
\draw [line width=1pt] (-0.86,1.27)-- (0.48,0.37);
\draw [line width=1pt,dotted] (-0.78,-0.67)-- (1.98,0.13);
\begin{scriptsize}
\draw [fill=black] (-0.86,1.27) circle (2.5pt) node[above right] {$(0,0,2)$};
\draw [fill=black] (1.68,-2.23) circle (2.5pt) node[below right] {$(5,0,0)$};
\draw [fill=black] (1.98,0.13) circle (2.5pt) node[below right] {$(0,3,0)$};
\draw [fill=black] (0.48,0.37) circle (2.5pt) node[above right =5pt] {$(1,1,1)$};
\draw [fill=black] (-0.78,-0.67) circle (2.5pt) node[below left] {$(0,0,0)$};
\end{scriptsize}
\end{tikzpicture}
\caption{\label{fig:chebyshev} The Newton polytope of the polynomial $f_t$ in $f_t$ in Example \ref{exa:chebyshev-ft}.}
\end{figure}
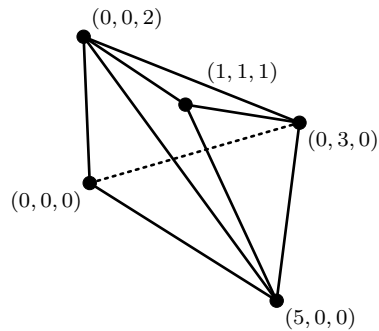

\subsubsection{Cohomology groups}
\label{sec:cohomology}
For a mixed Hodge structure (MHS) $(H, W_{\bullet}, F^{\bullet})$, where $W_{\bullet}$ is the weight filtration and $F^{\bullet}$ is the Hodge filtration, the $i$th graded piece $\operatorname{gr}_i^W H$ is the quotient $\operatorname{gr}_i^W H \coloneqq W_i/W_{i-1}$. It is a pure Hodge structure of weight $i$. 

We recall a few results on the cohomology groups $H^i(Z_f, \QQ)$. 
We write $\kappa \coloneqq \dim Z_f=d-1$. 

\subsubsection*{General facts} 
Assume that $Z_f$ is $\Delta$-regular. Then, 
one has that (\cite[Section 3]{Batyrev-tori}):
\begin{enumerate}
    \item for all $i > \kappa$, $H^i(Z_f, \QQ)=0$; 
    \item for each $i \leq \kappa$, $H^i(Z_f, \QQ)$ carries a MHS
    with weight filtration $W_{j}=W_jH^i(Z_f,\QQ)$:
    \[
     0={W}_{i-1} \subset{W}_{i} \subset \dots \subset W_{2i}= H^i(Z_f,\QQ)
    \] 
    and Hodge filtration $F^{j}=F^jH^i(Z_f,\CC)$:
    \[
    H^i(Z_f,\CC)=F^0 \supset F^1 \supset \dots \supset F^{i+1}=0
    \] 

    \item the pullback map $H^i(\TT^d, \QQ) \to H^i(Z_f, \QQ)$
    is a morphism of MHS. It is bijective for $i < \kappa$ and injective for $ i = \kappa$. 
\end{enumerate}
\smallskip

\noindent It follows from (3) that, for $i<\kappa$,  $H^i(Z_f, \QQ)$ is a  pure Hodge structure of rank $\binom{d}{i}$ and weight $2i$, and $H^i(Z_f, \CC)=F^i H^i(Z_f,\CC)$. The image of $H^{\kappa}(\TT^d,\QQ)$ in $H^\kappa(Z_f, \QQ)$ has rank $d$ and weight $2\kappa$, and the image of $H^{\kappa}(\TT^d,\CC)$ in $H^\kappa(Z_f, \CC)$ is contained in $F^{\kappa}H^\kappa(Z_f, \CC)$.

The dependency of $H^{\kappa}(Z_f, \QQ)$ on $f$ is captured by 
the primitive cohomology $PH^{\kappa}(Z_f, \QQ)$, defined as the cokernel of the pullback map 
$H^{\kappa}(\TT^d, \QQ) \to H^{\kappa}(Z_f, \QQ)$.
By (2) and (3),  $PH^{\kappa}(Z_f, \QQ)$ carries a MHS of weights $\in \{k,\dots,  2k\}$.

In this paper, we will be interested in the graded piece 
\begin{equation*}
\gr_\kappa^W PH^{\kappa}(Z_f, \QQ)=W_\kappa PH^{\kappa}(Z_f, \QQ)
\end{equation*}
which carries a pure Hodge structure of weight $\kappa$.
Note that, if $\kappa > 0$, $W_\kappa H^{\kappa}(Z_f, \QQ)$ and $W_\kappa PH^{\kappa}(Z_f, \QQ)$ coincide while, if $\kappa=0$, the latter is the quotient of the first by a copy of $\QQ$. 

\begin{rem}
\label{rem:compact-support}
Above, we have worked with cohomology, but one may as well work with compactly supported cohomology. 

For $i <\kappa$, the cohomology group $H^i_c(Z_f, \QQ)$ is trivial. For $i \geq \kappa$, $H^i_c(Z_f, \QQ)$ carries a MHS of weights $\leq i$. 
The primitive cohomology with compact support $PH_c^\kappa(Z_f, \QQ)$ is the kernel of the Gysin morphism $H_c^\kappa(Z_f, \QQ) \to H_c^{\kappa+2}(\TT^d, \QQ)$. 

Let $\QQ(-\kappa)$ be the pure Hodge structure of rank $1$ and weight $2\kappa$.
The two cohomology theories are related by Poncair\'{e} duality
\[ H^i(Z_f , \QQ) \times H^{2\kappa -i}_c(Z_f, \QQ) \to 
\QQ(-\kappa)
\]
which is compatible with the MHS.  
\end{rem}

\begin{example}[Example \ref{exa:curve-ft} continued]
\label{exa:curve-numbers}

Fix $t\in \CC^\times \setminus \{ 1\}$ and let $Z_{f_t}, \bar{Z}_{f_t}$ be as in Example \ref{exa:curve-ft}. By Baker's Theorem, since $\Delta$ has two interior points, $Z_{f_t}$ has genus $2$, thus $\gr_1^W H^1(Z_{f_t}, \QQ)$ has rank $4$ and Hodge numbers $h^{1,0}=h^{0,1}=2$.
Moreover, since $\bar{Z}_{f_t}\setminus Z_{f_t}$ consists of five points,  $W_2 H^1(Z_{f_t}, \QQ) = H^1(Z_{f_t}, \QQ) $ has rank $8$ and $\gr_{2}^W H^1(Z_{f_t}, \QQ)$ 
has Hodge numbers $h^{2,0}=h^{0,2}=0$, $h^{1,1} =4$.
\end{example}

\begin{example}[Examples \ref{exa:chebyshev-ft} continued]
\label{exa:chebyshev-hodge}
Fix $t\in \CC^\times \setminus \{ 1\}$, and let $Z_{f_t}$ be as in Example \ref{exa:chebyshev-ft}. The cohomology group $H^{2}(Z_{f_t},\QQ)$ carries a MHS of weights
$2,3$, and $4$.
The graded piece $\gr_2^W H^{2}(Z_{f_t},\QQ)$ 
has rank $8$ and Hodge numbers $h^{2,0}=h^{0,2}=0$,  $h^{1,1}=8$.
For a full description of the MHS on $H^{2}(Z_{f_t},\QQ)$, we refer the reader to  \cite[Section 4]{FRV-duke}.
\end{example}
\smallskip

The middle cohomology $H^\kappa(Z_f, \QQ)$ is related to the cohomology group $H^d(\TT^d \setminus Z_f, \QQ)$ of the complement $\TT^d \setminus Z_f$ 
via the exact sequence of MHS:
\begin{equation}
\label{eq:residue-map}
0 \to H^d(\TT^d,\QQ) \to H^{d}(\TT^d \setminus Z_f,\QQ) \xrightarrow{res} H^{\kappa}(Z_f,\QQ)(-1) \to  0
\end{equation}
The map $res$ is called the Poincar\'e residue mapping \cite[Section 5]{Batyrev-tori}.

The complement $\TT^d \setminus Z_f$ is isomorphic to the hypersurface $Z_{\widetilde{f}} \subset \TT^{d+1}$ defined by the Laurent polynomial \[\widetilde{f}=x_0f-1 \in L[x_0^{\pm 1}]=\CC[x_0^{\pm 1}, \dots, x_d^{\pm 1}]\]
The Laurent polynomial $f$ is $\Delta$-regular if and only if $\widetilde{f}$ is $\widetilde{\Delta}$-regular, where $\widetilde{\Delta} \subset \RR^{d+1}$ is  the Newton polytope of $\widetilde{f}$ (\cite[Proposition 4.2]{Batyrev-tori}). 
The image of the pullback map $H^{d}(\TT^{d+1}, \QQ) \to H^d(Z_{\widetilde{f}},\QQ)$ is spanned by the $d+1$ differential forms:
\[
\frac{dx_1}{x_1} \wedge \cdots \wedge \frac{dx_d}{x_d}  \quad \text{and} \quad  \frac{df}{f} \frac{dx_1}{x_1} \wedge \cdots\wedge\widehat{\frac{dx_i}{x_i}}\wedge \dots  \wedge \frac{dx_d}{x_d}  \quad \text{for} \ i=1, \dots, d
\]
Using \eqref{eq:residue-map} one finds that  \mbox{$F^0 H^{d}(\TT^d \setminus Z_f,\CC)=F^1 H^{d}(\TT^d \setminus Z_f,\CC)$} and $W_d H^{d}(\TT^d \setminus Z_f,\QQ)=0$. 
\smallskip

The maps described above fit into the commutative diagram of MHSs: 
\begin{equation}
    \label{eq:Z-Ztilde-diagram}
\begin{tikzcd}
 &  &  0 \arrow[d] & 0 \arrow[d]   & \\
 0 \arrow[r] & H^{d}(\TT^d, \QQ) \arrow[r] \arrow[d, "\simeq"] &  H^{d}(\TT^{d+1}, \QQ)  \arrow[r, "res"] \arrow[d] & H^\kappa(\TT^d, \QQ)(-1) \arrow[d]  \arrow{r} & 0 \\
0 \arrow[r] & H^{d}(\TT^d, \QQ) \arrow[r]  &  H^{d}(\TT^d \setminus Z_{f}, \QQ) \arrow[r, "res" ] \arrow[d] & H^{\kappa}( Z_{f}, \QQ)(-1) \arrow[d] \arrow{r} & 0 \\
&       &  PH^{d}(\TT^d \setminus Z_f, \QQ) \arrow[r, " \simeq "]      \arrow[d]      &  PH^{\kappa}(Z_f, \QQ)(-1) \arrow[d]    & \\
 &      & 0           & 0 & 
\end{tikzcd}
\end{equation}
Note that, if $\kappa>0$, 
$W_{d+1}H^d (\TT^d \setminus Z_f,\QQ)$ is isomorphic to $W_{\kappa}PH^\kappa (Z_f,\QQ)(-1)$,  while, if $\kappa=0$, the latter 
is the quotient of the first one by the rank-$2$ image of 
 $H^{1}(\TT^{2}, \QQ) \to H^1(\TT^1\setminus Z_f,\QQ)$.

\subsubsection*{Dwork--Katz method}
Assume that $f$ is $\Delta$-regular. 
Recall that $H^d(\TT^d \setminus Z_f, \QQ)\otimes \CC$ is isomorphic to the de Rham cohomology group $H^d_{dR}(\TT^d \setminus Z_f)$. 
The latter admits an explicit combinatorial description, given in \cite[Sections 7-8]{Batyrev-tori} and known as the Dwork--Katz method. 
We briefly review it. 

Let $S_\Delta^+$ be the $\CC$-subalgebra of $L[x_0]$ generated as a $\CC$-vector space by all monomials $x_0^kx^m$ such that  $k \ge 1$  and $m/k \in \Delta.$ 
For each integer $\ell$, let $\mathcal{E}^{-\ell}$ be the $\CC$-vector subspace of $S_\Delta^+$  generated by monomials $x_0^k x^m$ with $k\le \ell$, and $\mathcal{I}^{\ell}$ be the ideal of $S_\Delta^+$ generated by monomials $x_0^kx^m$ such that $m/k$ does not lie on any face of $\Delta$ of codimension $\ell$. 

The monomials in $S_{\Delta}^{+}$ correspond to the nonzero integral points of the cone
\[ C(\Delta) \coloneqq \{\lambda \cdot (1, u) \mid \lambda \geq 0, u \in \Delta\ \} \subset \RR^{d+1}
\] 
The monomials in $\mathcal{E}^{-\ell}$ correspond to the integral points $(k,m)$  of $C(\Delta)$ with $k \le \ell$, while the monomials in $\mathcal{I}^{\ell}$ correspond to the integral points of $C(\Delta)$ that do not lie on any face of $C(\Delta)$ of codimension $\ell$. 

On $S_\Delta^+,$ define the operators 
\begin{equation*}
\label{eq:op-Di}
\mathcal{D}_i := x_i\frac{\partial}{\partial x_i} + \widetilde{f}_i \qquad i=0,\dots,d
\end{equation*}
where  $\widetilde{f}_i \coloneqq x_i \frac{\partial}{\partial x_i} \widetilde{f}.$ 

Let $\Omega^d(\TT^d \setminus Z_f)$ be the space of meromorphic differential forms on $\TT^d$ with  poles  of arbitrary order along $Z_f$, and define the map 
\begin{align*}
    \mathcal{R}: S_\Delta^+ &\to  \Omega^d(\TT^d \setminus Z_f)\\
    x_0^k x^m &\mapsto (-1)^{k-1} (k-1)!  \; \frac{x^m }{f^k } \frac{dx}{x}
\end{align*}
where $\frac{dx}{x}\coloneqq \frac{dx_1}{x_1}\wedge\dots \wedge \frac{dx_d}{x_d}$.

\begin{thm}{\cite[Theorem 7.13, Theorem 8.1, Theorem 8.2]{Batyrev-tori}}
\label{thm:Baty-summary} \hfill
\begin{enumerate}
    \item The composition 
    $S_\Delta^+ \xrightarrow{\mathcal{R}} \Omega^d(\TT^d\setminus Z_f) \to H^d(\TT^d\setminus Z_f, \CC)$
    factors through the quotient $S_\Delta^+/ \bigoplus_{i=0}^d \mathcal{D}_i S_\Delta^+$, and the map
    \[
    \rho: S_\Delta^+/ \bigoplus_{i=0}^d \mathcal{D}_i S_\Delta^+ \to H^d(\TT^d\setminus Z_f, \CC)
    \] is an isomorphism. Below, we denote the image of a subset of $S_{\Delta}^+$ in $S_\Delta^+/  \bigoplus_{i=0}^d \mathcal{D}_i S_\Delta^+$ by the same symbol as the subset itself.
    \item The Hodge filtration $F^j$ on $H^d(\TT^d \setminus Z_f,\CC)$ fits into the commutative diagram:
    \begin{equation*}
    \begin{tikzcd}
    0=F^{d+1}\ar[r, hook] \ar[d,equal] & F^d \ar[r, hook]\ar[d,equal] & \dots \ar[r,hook]  &F^{1} = F^0 
    = H^d(\TT^d \setminus Z_f, \CC)
    \ar[d,equal] 
    \\
    \rho(\mathcal{E}^0) \ar[r, hook] & \rho(\mathcal{E}^{-1}) \ar[r,hook] & \dots  \ar[r,hook] & \rho(\mathcal{E}^{-d})
    \end{tikzcd} 
    \end{equation*}
    \item The weight filtration $W_j$ on $H^d(\TT^d \setminus Z_f,\CC)$ fits into the following commutative diagram: 
    \begin{equation*} 
    \begin{tikzcd}
    0=W_d\ar[r, hook]  & W_{d+1} \ar[r, hook] \ar[d,equal]  & \dots \ar[r, hook] &W_{2d-1} \ar[d,equal]\ar[r,hook]  & W_{2d} = H^d(\TT^d \setminus Z_f,\CC) \\
    & \rho(\mathcal{I}^{1}) \ar[r,hook] & \dots  \ar[r,hook] & \rho(\mathcal{I}^{d-1})   & 
    \end{tikzcd} 
    \end{equation*}
    \end{enumerate}
\end{thm}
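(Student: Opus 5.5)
The statement is a summary of Batyrev's results, and I would prove it along the lines of \cite[Sections 7--8]{Batyrev-tori}. The argument has three stages: the algebraic isomorphism $\rho$, then the Hodge filtration, then the weight filtration. Throughout, $\Delta$-regularity of $f$ is the key input, used via its equivalent form, $\widetilde{\Delta}$-regularity of $\widetilde f=x_0f-1$.

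\emph{Step 1: $\mathcal R$ factors through $\bigoplus_i \mathcal D_i S_\Delta^+$.}
This is a direct computation.
\begin{itemize}
\item For $i=0$, $\mathcal D_0(x_0^kx^m)=k x_0^kx^m+x_0^{k+1}x^m f$. Since $\mathcal R(x_0^{k+1}x^mf)=(-1)^k k!\, x^m f^{-k}\frac{dx}{x}$, the two terms cancel exactly. So $\mathcal R(\mathcal D_0 S_\Delta^+)=0$ already at the level of forms.
\item For $i\ge1$, contract $\frac{dx}{x}$ with $x_i\partial_{x_i}$ and take
\[
d\Bigl(\frac{x^m}{f^k}\,\iota_{x_i\partial_{x_i}}\tfrac{dx}{x}\Bigr)=\pm\Bigl(m_i\frac{x^m}{f^k}-k\frac{x^m x_i\partial_{x_i}f}{f^{k+1}}\Bigr)\frac{dx}{x}.
\]
Matching constants shows this exact form is $\pm\mathcal R(\mathcal D_i(x_0^kx^m))$.
\end{itemize}

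\emph{Step 2: $\rho$ is an isomorphism.}
For surjectivity, I would use that $\TT^d\setminus Z_f$ is affine. Its algebraic de Rham cohomology is then spanned by forms $g f^{-k}\frac{dx}{x}$ with $g\in L$. Using the relations from Step 1 one reduces pole order and pushes the exponents $m$ into $k\Delta$. This uses Kouchnirenko's theorem: by $\widetilde\Delta$-regularity, $\widetilde f_0,\dots,\widetilde f_d$ form a regular sequence in the Cohen--Macaulay ring $S_\Delta^+$ (after the standard homogenisation). For injectivity, I would filter by $k$ and pass to the associated graded, where the $\mathcal D_i$ become multiplication by $\widetilde f_i$. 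A spectral sequence or deformation argument then gives
\[
\dim S_\Delta^+/\textstyle\sum \mathcal D_iS_\Delta^+=\dim S_\Delta^+/(\widetilde f_0,\dots,\widetilde f_d).
\]
By Kouchnirenko/Bernstein this equals $d!\,\mathrm{vol}(\Delta)$, which is $\dim H^d(\TT^d\setminus Z_f)$ because the Euler characteristic of the complement equals this volume. So $\rho$ is a surjection between spaces of equal dimension, hence an isomorphism.

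\emph{Step 3: Hodge filtration.}
Work on a smooth toric compactification $\mathbb P$ refining $\Sigma_\Delta$, where $V_f\cup(\mathbb P\setminus\TT^d)$ is a normal crossing divisor by Remark~\ref{rem:Pdelta-Zf-properties}. By Deligne, $F^p$ is computed by the logarithmic de Rham complex, truncated $\sigma_{\ge p}$. The task is to show that $F^p$ coincides with the pole-order filtration, i.e.\ forms $\mathcal R(x_0^kx^m)$ with $k\le d-p+1$, and that these match $\mathcal E^{-(d+1-p)}$ with the indexing above. The required input is a Bott--Danilov type vanishing of $H^{>0}(\mathbb P_\Delta,\Omega^q(\log)\otimes\mathcal O(k\Delta))$, which makes the pole-order spectral sequence degenerate. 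I expect this comparison to be the main obstacle: pole order can a priori be larger than Hodge level, and it is toric vanishing together with regularity that rules this out. I would first try to deduce it by counting dimensions against the graded pieces of Step 2.

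\emph{Step 4: weight filtration.}
Compute $W$ via the residue spectral sequence along the toric boundary strata $\TT_F$. A monomial $x_0^kx^m$ in $\mathcal I^\ell$ lies on no codimension-$\ell$ face of $C(\Delta)$. Hence its form has vanishing iterated residues along the intersections of $\ell$ boundary components, which places it in $W_{d+\ell}$. Equality would again follow by comparing dimensions graded piece by graded piece, using the Ehrhart-type counts of interior lattice points of faces.
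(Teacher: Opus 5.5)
The paper gives no proof of this statement; it is quoted from Batyrev, and your outline follows Batyrev's route. As written, however, Step 2 rests on a wrong count.

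\textbf{Step 2.} First, $\dim H^d(\TT^d\setminus Z_f,\CC)$ is not $d!\operatorname{vol}(\Delta)$. The Euler characteristic $(-1)^d d!\operatorname{vol}(\Delta)$ also collects the groups $H^i(\TT^d\setminus Z_f)\cong H^i(\TT^{d+1})$ for $i<d$, and accounting for them gives $\dim H^d=d!\operatorname{vol}(\Delta)+d$. For the paper's running example $\gamma=(-5,-2,3,4)$ this is $9$, not $7$.

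Second, $d!\operatorname{vol}(\Delta)$ is the length of $S_\Delta/(\widetilde f_0,\dots,\widetilde f_d)S_\Delta$ for the unital ring $S_\Delta$. Initial forms of $\sum_i\mathcal{D}_iS^+_\Delta$ only produce $(\widetilde f_0,\dots,\widetilde f_d)S^+_\Delta$. This misses the degree-one elements $\widetilde f_i$ themselves, which map to $\frac{dx}{x}$ and to the forms $\frac{df}{f}\wedge\cdots$ spanning the image of $H^d(\TT^{d+1})$. Meanwhile $S^+_\Delta$ has no degree-zero class. So the correct bound is $d!\operatorname{vol}(\Delta)-1+(d+1)$.

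The two errors cancel, and you only need the easy inequality from initial forms together with surjectivity, so Step 2 can be repaired. Surjectivity, however, does not follow from the regular-sequence property, which says nothing about exponents outside $C(\Delta)$. It also works by raising the pole order, not lowering it. Set $L[x_0]^+:=\bigoplus_{k\ge1}x_0^kL$; via $\mathcal{R}$, the quotient $L[x_0]^+/\sum_i\mathcal{D}_iL[x_0]^+$ is exactly the top algebraic de Rham cohomology of the affine variety $\TT^d\setminus Z_f$. Take a facet of $C(\Delta)$ with primitive inner normal $(c,n)$. The operator $c\mathcal{D}_0+\sum_i n_i\mathcal{D}_i$ rewrites a monomial $x_0^kx^m$ violating that facet as a combination of monomials $x_0^{k+1}x^{m+m'}$, with $m'$ in the support of $f$. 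On these, no facet functional decreases and the violated one increases by at least one, so induction lands in $S^+_\Delta$. No regularity is needed for this.

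\textbf{Steps 3 and 4.} Your fallback of counting dimensions is not self-contained. It needs the Hodge and weight numbers of $H^d(\TT^d\setminus Z_f)$, which you would have to import from Danilov--Khovanskii. It also needs exact dimensions of $\rho(\mathcal{E}^{-k})$ and $\rho(\mathcal{I}^\ell)$, and these are not lattice-point counts, because they depend on how $\sum_i\mathcal{D}_iS^+_\Delta$ meets those subspaces.

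The vanishing you postpone to Step 3 in fact drives (1) and (2) at once. Work on a smooth projective refinement $\mathbb{P}$. There $\Omega^q_{\mathbb{P}}(\log\partial\mathbb{P})$ is free, and $H^0(\mathcal{O}_{\mathbb{P}}(kV_f))$ is spanned by the $x^m$ with $m\in k\Delta$. The vanishing $H^{>0}(\mathbb{P},\Omega^q_{\mathbb{P}}(\log\partial\mathbb{P})(kV_f))=0$ for $k\ge1$ then lets you compute $H^d$ from global sections of $\Omega^\bullet_{\mathbb{P}}(\log\partial\mathbb{P})(*V_f)$. These are the forms $\frac{g}{f^k}\frac{dx}{x}$ with $\operatorname{supp}g\subset k\Delta$, modulo $d$ of such $(d-1)$-forms, which is literally $S^+_\Delta/\sum_i\mathcal{D}_iS^+_\Delta$; no dimension count is needed. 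Since $V_f$ is smooth and transverse to $\partial\mathbb{P}$, the stupid filtration on the log complex corresponds to the pole-order filtration along $V_f$. The same computation therefore yields $F^{d+1-k}=\rho(\mathcal{E}^{-k})$.

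For (3), your residue argument gives at most the inclusion $\rho(\mathcal{I}^\ell)\subseteq W_{d+\ell}$. The reverse inclusion needs genuine control of $\rho(\mathcal{I}^\ell)$, for instance through the compatibility of $\rho$ with residues to the strata $\TT_F$, and the proposal does not supply this.
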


\smallskip

\begin{example}[$d=2$]
Let $d=2$. Then $f$ is a two-variable polynomial and $\Delta$ is a polygon.  We have
\[  H^2(\TT^2\setminus Z_{f}, \CC)=F^0=F^1 \supset F^2 \supset F^3=0
\]
and 
\[
0=W_2 \subset W_3 \subset W_4=H^2(\TT^2\setminus Z_{f_t}, \CC)\]
The nonzero integral points $(m,k)$ of $C(\Delta)$ such that $k \leq 2$ generate $H^2(\TT^2\setminus Z_{f_t}, \CC)=F^0=F^1$, with those such that $k=1$ generating $F^2$. 
The integral interior points of $C(\Delta)$ generate $W_3$. 
\end{example}

\begin{example}[$d=3$]
   Let $d=3$. Then $f$ is a three-variable polynomial and $\Delta$ is a $3$-dimensional.  We have
\[  H^3(\TT^3\setminus Z_{f}, \CC)=F^0=F^1 \supset F^2 \supset F^3 \supset F^4=0
\]
and 
\[
0=W_3 \subset W_4 \subset W_5 \subset W_6=H^3(\TT^3\setminus Z_{f_t}, \CC)\]
The nonzero integral points $(m,k)$ of $C(\Delta)$ such that $k \leq 3$ generate $H^3(\TT^3\setminus Z_{f_t}, \CC)=F^0=F^1$, with those such that $k\leq2$ generating $F^2$ and those such that $k=1$ generating $F^3$. 
The integral interior points of $C(\Delta)$ generate $W_4$. The integral points in the interior of $C(\Delta)$ or in the relative interior of its $3$-dimensional faces generate $W_5$. 
\end{example}

\begin{example}[Examples \ref{exa:curve-ft} and \ref{exa:curve-numbers} continued]
\label{exa:curve-cone} Fix $t\in \CC^\times \setminus \{ 1\}$, let $f_t$ be the polynomial of Example \ref{exa:curve-ft}, and consider $H^2(\TT^2 \setminus Z_{f_t}, \CC)$.

The cone $C(\Delta)$ has two interior integral points $(m,k)$ such that $k=1$, namely $(1,1,1)$ and $(1,2,1)$. The intersection $W_3 \cap F^2$ is generated by the corresponding two elements. Since $h^{1,0}( \gr_1^WH^1(Z_{f_t},\CC))=2$, the two elements must be independent. 

A computer calculation shows that the classes of the forms 
\begin{align*}
    \omega_1  = \frac{x_1x_2}{f} \frac{dx}{x} \qquad \omega_2  = \frac{x_1x_2^2}{f} \frac{dx}{x}\\
    \omega_3  = \frac{x_1^2x_2^2}{f^2} \frac{dx}{x} \qquad \omega_4  = \frac{x_1^2x_2^4}{f^2} \frac{dx}{x}
\end{align*}
form a basis of $W_3$.
Moreover, $W_4=H^2 (\TT^2\setminus Z_{f_t}, \CC)$ is generated by the classes of $\omega_1, \dots, \omega_4$ and the five classes of
\[
\frac{x_1}{f}\frac{dx}{x} \quad   \frac{x_1^2x_2^2}{f}\frac{dx}{x} \quad  \frac{x_1^2 x_2}{f}\frac{dx}{x} \quad \frac{x_1^2}{f}\frac{dx}{x} \quad\frac{x_2^3}{f}\frac{dx}{x}
\]
The one-dimensional kernel of the residue map $H^2(\TT^2 \setminus Z_{f_t}, \CC)\to H^1(Z_{f_t}, \CC)(-1)$ is generated by the class of
\[
\frac{dx}{x } = -5 \frac{x_1^2}{f}\frac{dx}{x} -2t \frac{x_2^3}{f}\frac{dx}{x} +3t \frac{x_1^2x_2^2}{f}\frac{dx}{x}  +4 \frac{x_1}{f}\frac{dx}{x}  
\]

\end{example}

\begin{example}[Examples \ref{exa:chebyshev-ft} and \ref{exa:chebyshev-hodge} continued]
\label{exa:chebyshev-cone}
Fix $t\in \CC^\times \setminus \{ 1\}$, let $f_t$ be the polynomial of Example \ref{exa:chebyshev-ft} and   consider $H^2(\TT^2 \setminus Z_{f_t}, \CC)$.

The cone $C(\Delta)$ has no interior points $(m,k)$ with $k=1$. Hence $W_4 \cap F^3=0$ (and $W_4 \cap F^2=W_4 \cap F^1$). This 
agrees with the fact that  $h^{2,0}(\gr_2^W H^{2}(Z_{f_t},\CC))=0$. However, $C(\Delta)$ has $15$  integral interior points $(m,k)$ with $k=2$. The piece $W_4$ is spanned by the corresponding elements. 

We already know that $W_4$ has rank $8$ as $\gr_2^WH^2(Z_{f_t}, \CC)$ has rank $8$. A computer calculation shows that 
the classes of the forms
\begin{align*}
   \omega_1= \frac{x_1^2x_2x_3^2}{f^2}\frac{dx}{x}  \quad  \omega_2= \frac{x_1^2x_2x_3}{f^2}\frac{dx}{x} \quad \omega_3= \frac{x_1^4x_2x_3}{f^2}\frac{dx}{x}  \quad  \omega_4 = \frac{x_1x_2x_3^2}{f^2}\frac{dx}{x} \\
   \omega_5= \frac{x_1x_2x_3}{f^2}\frac{dx}{x}  \quad  \omega_6= \frac{x_1^3x_2x_3^2}{f^2}\frac{dx}{x} \quad \omega_7= \frac{x_1^3x_2x_3}{f^2}\frac{dx}{x}  \quad  \omega_8 = \frac{x_1x_2^2x_3}{f^2}\frac{dx}{x} 
\end{align*} form a basis of $W_4$. 
\end{example}

\subsection{Families of hypersurfaces} \label{subsec:families}
Let $d, l \geq 1$ be integers. Let $\bs  f  \in \CC[x_1^{\pm},\dots,x_d^{\pm1}, u_1^{\pm 1},\dots, u_l^{\pm 1}]$ be a Laurent polynomial. 
Let $\bs U$ be an affine open subset of  $\TT^l=\Spec[u_1^{\pm 1}, \dots, u_1^{\pm l}]$.  Let $Z_U$ be the zero locus of ${\bs f}$ in $\TT^d \times \bs U$, that is, $Z_U \coloneqq Z_f \cap \; \TT^d \times U$, and let $\pi_U$ 
be the projection map
\begin{align}
\label{eq:projection}
     \bs{\pi_U}: \bs{Z_U} &\to \bs{U}\\
        (x_1, \dots, x_d,u_1, \dots, u_l) &\mapsto  (u_1, \dots, u_l)
\end{align}
For a point $u=(u_1, \dots, u_l) \in \bs{U}$  denote by $\bs{Z}_u$ the fiber of $\pi_{U}$ over $u$. Then $Z_u$ is the zero locus $Z_{\bs{f}_u} \subset \TT^d$ of the Laurent polynomial $\bs{f}_u\coloneqq \bs{f}(\cdot,u) \in \CC[x_1^{\pm 1},\dots,x_d^{\pm 1}]$. 
We assume that $Z_u \subset \TT^d$ is a hypersurface for all $u \in U$.
We view the map ${\pi_U}$ as a family of affine hypersurfaces $(Z_u)_{u \in U}$ in the algebraic torus $\mathbb{T}^{d}$.  

For the rest of this section, we drop the subscript $U$ and simply write  $Z=Z_U$ and $\pi=\pi_U$.

\subsubsection{Fibrewise compactifications} 
\label{sec:fibre-cmpt} 
Studying the map $\pi$ and the variation of cohomology of $Z_u$ is challenging for two main reasons. 
First, $\pi$ is not necessarily smooth, although this can be fixed by restricting to the regular locus. 
Second, it is not proper, which is undesirable (for instance, when $\pi$ is smooth, properness ensures that the rank of $H^i(Z_u, \QQ)$ is locally constant).
We discuss how restricting to the locus of quasi-$\Delta$ regularity allows us to obtain well-behaved fibrewise compactifications of $\pi$.

We denote by $\Delta$ the Newton polytope of $\bs f$ over $\CC[u_1^{\pm1},\dots, u_l^{\pm1}]$.
\begin{thm}\label{thm:compactifying-delta}
    Assume that 
    \begin{enumerate}
        \item $\bs{Z}$ is smooth;
        \item for each $u \in \bs{U}$ the fibre $\bs{Z}_u$ is quasi-$\Delta$-regular.
    \end{enumerate}
    Then there exists a fiberwise compactification $\bar{\bs\pi}: \bs{V} \to \bs{U}$ of $\bs \pi$, that is, a partial compactification $ \bs V$ of $\bs Z$ and  a commutative diagram  
    \[
\begin{tikzcd}
\bs Z  \ar[r,hook, "\bs j" ] \ar[rd, "\bs\pi" ] & \bs V \ar[d, "\bar{\bs \pi}"] \\
 & \bs U 
\end{tikzcd}
\] where $\bar{\bs \pi}$ is proper and $\bs j : \bs Z \to \bs V $ denotes the open inclusion such that
    \begin{enumerate}
    	\item[(i)] $\bs{V}$ is smooth; 
        \item[(ii)] $\bs V \setminus  \bs j(\bs Z)$ is a normal crossing divisor over $\bs{U}$;
        \item[(iii)] if $x\in {\bs V}_u:={\bar{\bs \pi}}^{-1}(u)$, for some $u$, is a critical point of $\bar{\bs \pi}$, then $x \in \bs j(\bs Z_u)$.
    \end{enumerate} 
\end{thm}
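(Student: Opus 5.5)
The plan is to compactify inside a smooth toric variety and argue torus orbit by torus orbit, working over the base ring $A=\OO(U)$ as allowed by Remark \ref{rem:ring}. Choose a smooth refinement $\Sigma$ of the normal fan $\Sigma_\Delta$ of $\Delta$, with associated smooth proper toric variety $\PP$ and resolution $\PP\to\PP_\Delta$ (Remark \ref{rem:Pdelta-Zf-properties}). Let $V$ be the closure of $Z$ in $\PP\times U$ and $\bar\pi$ the projection to $U$. Since $\PP$ is proper and $V$ is closed in $\PP\times U$, the map $\bar\pi$ is proper, and $j\colon Z\hookrightarrow V$ is an open immersion because $Z=V\cap(\TT^d\times U)$. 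Because $\Delta$ is the Newton polytope of $f$ over $\CC[u^{\pm1}]$, and each $Z_u$ has Newton polytope $\Delta$ by quasi-$\Delta$-regularity, the construction does not change from fibre to fibre.

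Next I describe $V$ locally near the boundary. Take a smooth cone $\sigma\in\Sigma$ with affine chart $\CC^r\times\TT^{d-r}\subset\PP$, with coordinates $y_1,\dots,y_r$ for the rays and $w$ for the torus part. Let $F$ be the face of $\Delta$ whose normal cone contains $\sigma$ in its relative interior. After multiplying $f$ by a monomial (a unit on the torus), the local equation of $V$ is
\[ g(y,w,u)=f_{|F}(w,u)+\textstyle\sum_i y_i\, h_i(y,w,u), \]
where $f_{|F}$ depends only on the torus coordinates transverse to $F$. The orbit $O_\sigma\times U=\{y=0\}$ meets $V$ in the zero locus of $f_{|F}$. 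There are two cases. If $F$ is a vertex, then $f_{|F}$ is a monomial with coefficient nowhere vanishing on $U$ (its Newton polytope over $U$ has this vertex, and it stays a vertex in every fibre), so $V$ avoids this stratum. If $F$ is a proper face of positive dimension, quasi-$\Delta$-regularity says that $Z_{f_{|F}}\cap(\TT\times\{u\})$ is smooth for every $u$. So $f_{|F}=0$ and $d_wf_{|F}=0$ have no common solution, i.e. $d_w g\neq 0$ along $V\cap\{y=0\}$. The same holds for every face $F'\succeq F$ relevant to the partial strata $\{y_i=0,\ i\in I\}$.

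From this local description I read off the three properties. At a boundary point, $d_w g\ne0$ shows that $V$ is smooth there, and even that $\bar\pi$ restricted to $V$ is a submersion there. Moreover, $V\cap\{y_i=0 \text{ for } i\in I\}$ is cut out transversally for every $I$, because on each such stratum the equation is the restriction $f_{|F'}$ to the corresponding face, and its $w$-differential is nonzero. Hence $V\setminus j(Z)=\bigcup_i\{y_i=0\}\cap V$ is a divisor with normal crossings relative to $U$, which gives (ii). Interior points of $V$ lie in $Z$, which is smooth by hypothesis (1), so together with the boundary smoothness this gives (i). For (iii), a critical point of $\bar\pi$ cannot lie on the boundary, since there $\bar\pi$ is a submersion; hence it lies in $j(Z_u)$.

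The main obstacle is making the local equation precise uniformly in $u$. Two things need care. First, the closure of $Z$ in $\PP\times U$ must coincide with the scheme cut out by $g$: no extra components may lie over the boundary, and flatness over $U$ must hold. Second, the facewise smoothness must be used in the relative sense, i.e. smoothness over $A$ as in Remark \ref{rem:ring}. For the first point, I would argue that $g$ is not divisible by any $y_i$, because $f_{|F}\not\equiv0$ on any fibre. For the smoothness over $A$, I would apply the relative version of \cite[Lemma 2.3]{DL}, which makes the chart computation of the previous paragraph work with $A$ in place of $\CC$.
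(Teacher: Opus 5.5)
Your proposal is correct and follows essentially the paper's argument. You take the closure of $Z$ in the toric variety over $U$ attached to a smooth refinement of the normal fan of $\Delta$, and use quasi-$\Delta$-regularity face by face to get smoothness of $\bar\pi$ and relative normal crossings along the boundary (which also forces critical points into $Z$), with hypothesis (1) handling interior points. The only difference is how relative smoothness of the face loci is obtained: you get it directly from the relative Jacobian criterion in the toric charts ($d_w g\neq 0$ at boundary points), whereas the paper first upgrades fibrewise smoothness to smoothness over $\OO_U(U)$ via miracle flatness and then appeals to Remark \ref{rem:Pdelta-Zf-properties}, so your route is slightly more direct and sidesteps the flatness step.
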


\begin{proof}
The conditions of the theorem imply that  $\bs f$ is quasi-$\Delta$-regular over $\OO_{\bs{U}}(\bs{U})$. Indeed, let $F$ be a 
proper 
face of $\Delta$. Consider the zero locus $(\bs f_{|F}=0) \subset \TT^d \times \bs U$, and the projection map 
\[{\bs \pi}_F \colon (\bs f_{|F}=0) \to \bs{U}\] 

First, we show that the map ${\bs \pi}_F$ is flat. 
By miracle flatness, it suffices to prove that the domain is smooth, the target is smooth, and the fibres have the same dimension. 
Indeed, $U$ is smooth. 
Moreover, (2) implies that all the fibres have the same dimension and are smooth, thus the domain is smooth. 
Now, since ${\bs \pi}_F$ is flat, and the domain and fibres are smooth, we have by \cite[Theorem 10.2]{Hart} that ${\bs \pi}_F$ is smooth. 
This proves quasi-$\Delta$-regularity of $\bs f$ over $\OO_{\bs{U}}(\bs{U})$.

Now, let $\mathbb{P}$ be the toric variety over $\bs{U}$ associated to a smooth refinement of the normal fan of $\Delta$. Let $\bs V$ be the closure of $\bs Z$ in $\mathbb{P}$, and let $\bar{\bs \pi} \colon \bs V \to \bs U$ be the projection map. Then, $\bar{\bs \pi}$  is a fibrewise compactification of $\bs \pi$. 
Quasi-$\Delta$-regularity of $\bs Z$ over $\bs U$ implies that for each $z \in \bs V \setminus j(Z)$, $V$ is smooth over $\bs U$ at $z$ (hence smooth over $\CC$ at $z$) and $V\setminus Z$ is a normal crossing divisor over $\bs U$. Moreover, condition (2) implies (iii), see Remark \ref{rem:Pdelta-Zf-properties}. 
Finally, since $Z$ is smooth, we have that $V= \left( V\setminus Z\right) \cup Z$ is smooth.
\end{proof}

\subsubsection{Relative cohomology}\label{subsec:relative-coho}  
Assume that $Z$ and $U$ satisfy the assumptions of Theorem  \ref{thm:compactifying-delta} and that the projection map $\pi \colon Z \to U$ in \eqref{eq:projection} is smooth. Equivalently, assume that, for all $u \in U$, $Z_u$ is $\Delta$-regular. 

Let $\widetilde{f}=x_0f-1 \in \CC[x_0^{\pm 1}, x_1^{\pm 1},\dots,x_d^{\pm 1}, u_1^{\pm 1},\dots, u_l^{\pm 1}]$, let $\widetilde{Z}$ be the zero locus of $\widetilde{f}$ in $\TT^{d+1} \times U$, and let $\widetilde{\Delta}$ be the Newton polytope of $\widetilde{f}$ over $\CC[u_1^{\pm1},\dots, u_l^{\pm1}].$ 
By the fact that $\widetilde{f}$ is $\tilde{\Delta}$-regular and an application of the Jacobian criterion to $\widetilde{f}$, one checks that $\widetilde{f}$ and $U$ also satisfy the assumptions of Theorem \ref{thm:compactifying-delta} and that the projection map $\widetilde{\pi} \colon \widetilde{Z} \to U$ is smooth. 

For a morphism $X \to U$, let \( \mathcal{H}^n(X/U) \) be the relative algebraic de Rham cohomology sheaf of $X$ over $U$ of degree $n$. 
The sheaf $\mathcal{H}^{n}({Z}/U)$ has stalk at $u \in U$ the $n$th algebraic de Rham cohomology of $Z_u$. 
It is equipped with a connection $\nabla: \mathcal{H}^n({Z}/U)\to  \mathcal{H}^n({Z}/U) \otimes \Omega^1_U$ called the algebraic Gauss-Manin  connection \cite{Katz-Oda}. This connection makes $\mathcal{H}^n({Z}/U)$ into a $\mathcal{D}_U$-module, where $\mathcal{D}_U$ 
is the sheaf of differentials over $U$,  by defining 
\[ \frac{\partial}{\partial u_j} \cdot \omega\coloneqq \nabla_{\frac{\partial}{\partial u_j}} \omega
\]
where $j=1, \dots, l$ and $\omega$ is a section of $\mathcal{H}^n({Z}/U)$.
Moreover, $\mathcal{H}^n({Z}/U)$ carries a variation of mixed Hodge structure (VMHS) which, on stalks, yields the MHSs of the fibres. 
The analog statements hold for $\mathcal{H}^{n}(\widetilde{Z}/U)$. 

We will only consider the non-trivial cases $\mathcal{H}^{\kappa}({Z}/U)$ and $ \mathcal{H}^{d}(\widetilde{Z}/U)$.
For a torus $\TT^n$, we will write $\mathcal{H}^i(\TT^n \times U/U)$ for the $i$th relative algebraic de Rham cohomology sheaf of $\TT^n \times U$ over $U$ (with trivial connection). We will denote by $P\mathcal{H}^n({Z}/U)$ and $P\mathcal{H}^{d}(\widetilde{Z}/U)$ the relative primitive algebraic de Rham cohomology
sheaves. These sheaves fit into a commutative diagram of VMHSs, compatible with the connections, which, on stalks, restricts to the diagram \eqref{eq:Z-Ztilde-diagram}.

The sections of $\mathcal{H}^d(\widetilde{Z}/U)$ are generated by the forms 
\begin{equation}
\label{eq:form-omegab}
\omega_{(\beta_0,\beta)}= \frac{x^{\beta}}{f^{\beta_0}} \frac{dx}{x}
\end{equation}
where $\beta_0 \in \Z_{\ge 1}$, $\beta=(\beta_1,\dots,\beta_d) \in \Z^d$,  and $\beta/\beta_0 \in \Delta$. We call such forms \emph{monomial forms}.
The action of the algebraic Gauss-Manin connection $\nabla$ 
is given by

\begin{align}
\label{eq:construction-of-gauss-manin}
\nabla_{\frac{\partial}{\partial u_j}}\omega_{(\beta_0,\beta)} 
   & =  \frac{\partial}{\partial u_j} \frac{x^{\beta}}{f^{\beta_0}} \frac{dx}{x}  
\end{align} 
We refer to \cite[Section 11]{Batyrev-tori} for more details. 
The weight filtration $W_j\mathcal{H}^d(\widetilde{Z}/U)$ and the Hodge filtration $F^j\mathcal{H}^d(\widetilde{Z}/U)$ admit the description of Theorem 
\ref{thm:Baty-summary}.

The following theorem relates the forms in Equation \eqref{eq:form-omegab} with Gelfand-Kapranov-Zelevinsky (GKZ) systems of differential equations \cite{GKZ}.  

\begin{thm}{\cite[Theorem 14.2]{Batyrev-tori}}\label{theo:form-GKZ}
Let \[f= \sum_{j=1}^l u_j x^{m_j}\] for $m_j =(m_{1j},\dots,m_{dj}) \in \Z^d$. Let $\omega_{(\beta_0, \beta)}$ be as in Equation \eqref{eq:form-omegab}.
Then the \textit{periods} of $\omega_{(\beta_0,\beta)}$ satisfy the GKZ system of differential equations:
\begin{equation}\label{eq:form-GKZ}
\left\{
\begin{aligned}
& \sum_{j=1}^l \theta_j \; \Phi(u) = -\beta_0 \; \Phi(u),\\
& \sum_{j=1}^l m_{ij} \theta_j \; \Phi(u) = -\beta_i \; \Phi(u)
\quad (i=1,\dots,d),\\
& \prod_{r_j>0} {\partial_j}^{r_j} \; \Phi(u)
= \prod_{r_j<0}{\partial_j}^{-r_j} \;  \Phi(u)
\quad \text{for } r \in R.
\end{aligned}
\right.
\end{equation}
where $\theta_j \coloneqq u_j \frac{\partial }{\partial u_j}$  and 
$\partial_j \coloneqq \frac{\partial }{\partial u_j}$, 
and $R$ is the lattice of integral relations
\begin{equation}
R \coloneqq \left \{ r=(r_1,\dots,r_l) \in \Z^l \ \middle| \  \sum_{j=1}^l r_j(1,m_{1j} \dots, m_{dj})=0 \right\}
\end{equation}
\end{thm}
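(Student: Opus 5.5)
The plan is to realise the period as an explicit integral, $\Phi(u)=\int_\sigma \frac{x^\beta}{f^{\beta_0}}\frac{dx}{x}$ over a locally constant family of $d$-cycles $\sigma$ in $\TT^d\setminus Z_{f_u}$, and to differentiate under the integral sign. This is legitimate because, by the description of the Gauss--Manin connection in \eqref{eq:construction-of-gauss-manin}, $\nabla_{\partial/\partial u_j}$ acts on monomial forms by differentiating the integrand in $u_j$. For each of the three families of equations in \eqref{eq:form-GKZ}, we will identify the corresponding operator applied to the integrand with either an exact form or an identity of integrands.

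\emph{Homogeneity equations.} Start from $\partial_j f^{-\beta_0} = -\beta_0 x^{m_j} f^{-\beta_0-1}$. This gives
\[
\theta_j \frac{x^\beta}{f^{\beta_0}} = -\beta_0\, \frac{u_j x^{m_j} x^\beta}{f^{\beta_0+1}}.
\]
Summing over $j$ and using $\sum_j u_j x^{m_j}=f$ yields $\sum_j\theta_j \Phi=-\beta_0\Phi$.

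For the second family, consider the weighted sum
\[
\sum_j m_{ij}\theta_j \frac{x^\beta}{f^{\beta_0}} = -\beta_0 \frac{x^\beta\, (x_i\partial_{x_i} f)}{f^{\beta_0+1}}.
\]
On the other hand,
\[
x_i\partial_{x_i}\Bigl(\frac{x^\beta}{f^{\beta_0}}\Bigr)=\beta_i\frac{x^\beta}{f^{\beta_0}}-\beta_0\frac{x^\beta x_i\partial_{x_i}f}{f^{\beta_0+1}}.
\]
The left-hand side, multiplied by $\frac{dx}{x}$, is exact: it equals $(-1)^{i-1}d\bigl(\frac{x^\beta}{f^{\beta_0}}\bigwedge_{k\ne i}\frac{dx_k}{x_k}\bigr)$, since $d(dx_k/x_k)=0$. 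Its integral over the cycle $\sigma$ therefore vanishes. Subtracting the two displays gives $\sum_j m_{ij}\theta_j\Phi=-\beta_i\Phi$, as required. (These are exactly the relations $\mathcal{D}_i$ of Theorem~\ref{thm:Baty-summary} read through the map $\mathcal{R}$.)

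\emph{Box equations.} Take $r\in R$ and set $N=\sum_{r_j>0}r_j=\sum_{r_j<0}(-r_j)$; the two sums agree because the first coordinate of the relation is $\sum_j r_j=0$. Iterating $\partial_j f^{-s}=-s\,x^{m_j}f^{-s-1}$ gives
\[
\prod_{r_j>0}\partial_j^{r_j}\frac{x^\beta}{f^{\beta_0}}=(-1)^N\frac{(\beta_0+N-1)!}{(\beta_0-1)!}\,\frac{x^{\beta+\sum_{r_j>0} r_j m_j}}{f^{\beta_0+N}},
\]
and the analogous formula holds for the negative part of $r$. The remaining coordinates of the relation say $\sum_j r_j m_j=0$, so the two exponent vectors coincide and the integrands are literally equal. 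This gives the box equations.

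The only delicate point is justifying that these manipulations take place in $H^d(\TT^d\setminus Z_f)$, and hence are valid for periods. One needs the forms $\frac{x^{\beta'}}{f^{\beta_0'}}\frac{dx}{x}$ appearing along the way to remain monomial forms with $\beta'/\beta_0'\in\Delta$, and the cycles $\sigma$ to be horizontal over $U$. Both conditions hold. Multiplying by $x^{m_j}$ while raising the pole order by one keeps $\beta'/\beta_0'\in\Delta$, because $\Delta$ is convex and contains every $m_j$. Horizontality of $\sigma$ follows from smoothness of $\widetilde\pi$ together with the compactification of Theorem~\ref{thm:compactifying-delta}, which makes $R^d\widetilde\pi_*$ a local system. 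I expect this bookkeeping to be the main, though mild, obstacle; the equations themselves follow from the direct computations above.
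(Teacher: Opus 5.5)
Your proposal is correct and follows the standard route. The paper does not reprove this statement but cites \cite[Theorem 14.2]{Batyrev-tori}, and your computation is the argument behind that citation: you differentiate under the integral sign, obtain the first $d+1$ equations from $\sum_j u_j x^{m_j}=f$ together with the exactness relations (the same identities the paper writes out as \eqref{eq:exact-relation-zero} and \eqref{eq:exact-relations}), and obtain the box equations from the literal equality of integrands when $\sum_j r_j(1,m_j)=0$. The bookkeeping you flag is even lighter than you suggest: any form regular on $\TT^d\setminus Z_{f_u}$ has well-defined periods, and a fixed compact cycle stays in the complement for all nearby $u$, so neither the condition $\beta'/\beta'_0\in\Delta$ nor the compactification is needed for the period identities.
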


\smallskip

\begin{example}[Examples \ref{exa:curve-ft}, \ref{exa:curve-numbers}, \ref{exa:curve-cone} continued]
\label{exa:curve-u}
Let $d=2$ and $l=4$. Consider the polynomial \begin{equation}
\label{eq:curve-bigF}
\end{equation}
 Its zero locus in $\TT^2 \times (\CC^\times)^4$ is smooth. Its Newton polytope over $\CC[u_1^{\pm1}, u_2^{\pm1}, u_3^{\pm1}, u_4^{\pm1}]$ is the quadrilateral $\Delta$ in Figure \ref{fig:curve} (the polynomial $f_t$ of Example \ref{exa:curve-ft} is obtained by setting $u_1=-5, u_2=-2t, u_3=3t, u_4=4$).
The $\Delta$-regularity locus of the projection over $u$ is given by 
\begin{equation}
\label{eq:curve-value}
\frac{u_3^{3}u_4^{4}}{u_1^5u_2^2} \neq -\frac{3^32^5 }{5^5}
\end{equation} 
The lattice $R$ of integral relations of Theorem \ref{theo:form-GKZ} is spanned by $\gamma=(-5,-2,3,4) \in \ZZ^4$.

\end{example}

\begin{example}
[Examples \ref{exa:chebyshev-ft}, \ref{exa:chebyshev-hodge}, \ref{exa:chebyshev-cone} continued]
    \label{exa:chebyshev-u} The lattice $R$ of integral relations of the five vectors $(1, m_j) \in \ZZ^4$, with $m_j\in \ZZ^3$ the vertices of the polytope $\Delta$ of Example \ref{exa:chebyshev-ft},  is generated by $\gamma=(-30,-1,6,10,15)\in \ZZ^5$.
\end{example}

\subsubsection{Local systems}  \label{sec:local-systems}
We work under the assumptions of Section \ref{subsec:relative-coho}. We consider the local systems $R^n {\pi}_{*}\QQ$ and $R^n{\pi}_{!}\QQ$, or, equivalently, their monodromy representations.  
These sheaves are indeed local systems under our assumptions, see \cite[Proposition 6.14]{Deligne-equations}. 

The stalk of $R^n {\pi}_{*}\QQ$, respectively $R^n {\pi}_{!}\QQ$,  at a point $u$, is isomorphic to the $n$th cohomology, respectively compactly supported $n$th cohomology, with rational coefficients of the fibre of $\pi$ at $u$. 
The local systems  $R^n {\pi}_* \QQ$ and $R^{2\kappa-n} {\pi}_!\QQ$ are dual to each other, with the duality given on stalks by Poincaré duality for the cohomology of fibres.  
Both $R^n {\pi}_* \QQ$ and $R^n {\pi}_! \QQ$ carry VMHSs which, on stalks, yield the MHSs of the fibres.
We denote the weight filtrations by $W_jR^n {\pi}_* \QQ$ and $W_jR^n {\pi}_{!} \QQ$, the Hodge filtrations by $F^jR^n {\pi}_* \QQ$ and $F^jR^n {\pi}_{!} \QQ$, and the graded pieces by
$\operatorname{gr}_j^W R^n {\pi}_* \QQ$ and $\operatorname{gr}_j^W R^n {\pi}_! \QQ$.

We will consider the non-trivial cases $PR^{\kappa}{\pi}_* \QQ$ and $PR^{\kappa}{\pi}_{!} \QQ$. We will write $PR^{\kappa}{\pi}_* \QQ$ for the local system given by primitive cohomology and $PR^{\kappa}{\pi}_! \QQ $ for the local system given by primitive cohomology with compact support. 

Associated to the local system $R^n{\pi}_*\CC$ is a complex vector bundle with a flat connection, called the analytic Gauss--Manin connection, whose local system of flat sections is $R^k{\pi}_*\CC$ and local system of solutions is 
$R^k{\pi}_!\CC$ (\cite{Deligne-equations} or \cite[Section 9]{Voisin1}). 
Under the assumptions of Section \ref{subsec:relative-coho}, by \cite[Proposition 6.14]{Deligne-equations}, the analytic Gauss--Manin connection is the analytification of the algebraic Gauss-Manin connection. Throughout the paper, we will conflate the two objects and work in the analytic topology.
\smallskip

\section{Set up}
\label{sec:set-up}
In this section, we expand on the objects involved in Theorem \ref{thm:non-trivial-isomorphism} and prove some essential facts about them. 

We denote the rank of a local system $\mathbb{V}$ by $\rk{\mathbb{V}}$, and the order of a differential operator $D$ by $\mathrm{ord}\,D.$

\subsection{Hypergeometric local systems}
\label{sec:hgm-ls}
For more details on the material in this section, we refer the reader to \cite{BH, Katz, RV}.
\smallskip

For $n\ge 1$, let $\alpha:=(\alpha_1,\dots, \alpha_n)$ and $\beta:=(\beta_1,\dots,\beta_n)$ be multisets of rational numbers. To avoid redundancies, assume that there exists at least one pair $(\alpha_i, \beta_j)$ such that $\alpha_i -\beta_j \notin \ZZ$.
The \emph{hypergeometric differential operator} $H=H(\alpha, \beta)$ associated to $\alpha$ and $\beta$ is defined by\footnote{For convenience, we normalize the coordinate on $\Ps^1$ so that the singular point of $H$ over $\Gm$ is $t=1$.}  
\begin{equation}\label{eq:HG-diff-operator}
H(\alpha, \beta):=\prod_{i=1}^n (\theta+\beta_i -1) - t \prod_{i=1}^n (\theta+\alpha_i)
\end{equation}
where $t$ is a coordinate on $\PP^1$ and $\theta:= t\frac{d}{dt}.$
It is a differential operator of order $n$ on $\Ps^1$ with regular singularities at $0,1 $ and $\infty.$
The local system of solutions of the differential equation $H \cdot \varphi =0$, which we denote by $\HH$, is a complex local system of rank $n$ over $U=\PP^1 \setminus \{0,1,\infty\}$. We call $\HH$ a \emph{hypergeometric local system}.

\begin{rem}\label{rem:hypergeometric-functions}
If one of the $\beta_i$'s is $1$, the hypergeometric function 
\begin{equation}\label{eq:HGfunction-defi}
F\biggl(\begin{matrix} \alpha_1, \ \ldots, \ \alpha_n \\ \beta_1, \ \ldots, \ \beta_n \end{matrix} \:\bigg|\:\: t \biggr)=\sum_{k=0}^\infty \frac{(\alpha_1)_k(\alpha_2)_k\cdots (\alpha_n)_k}{(\be_1)_k(\be_2)_k\cdots (\be_n)_k} \: t^k
\end{equation} 
is a solution to $H(\alpha,\beta) \cdot \varphi =0$, where $(x)_k\coloneqq x(x+1)\cdots(x+k-1)$ for $k \geq 1$ and $(x)_0\coloneqq 1$. 
More generally,  
\[t^{1-\beta_i}
F\biggl(\begin{matrix} \alpha_1+1-\beta_i, \ \ldots, \ \alpha_n+1-\beta_i \\ \be_1+1-\beta_i, \ \ldots, \ \be_n+1-\beta_i \end{matrix} \:\bigg|\:\: t \biggr)
\quad \  i=1,\dots,n\]
are solutions of $H(\alpha,\beta) \cdot \varphi =0$; they form a basis of solutions near $t=0$ if the $\beta_i$'s are pairwise distinct modulo $\mathbb{Z}.$
\end{rem}

We fix a base point $t_0 \in U$. We denote by $\rho: \pi_1(U, t_0) \to \operatorname{GL}_n(\CC)$ the monodromy representation associated to $\HH$, which is defined up to conjugation. 
It is convenient to present $\pi_1(U, t_0)$ as generated by three loops $g_0, g_1,$ and $g_\infty$ around $0, 1,$ and $\infty$ respectively, satisfying the relation $g_\infty g_1 g_0=1.$ For $s= 0, 1, \infty$, we write $h_s \coloneqq \rho(g_s)$. 
We recall that:
\begin{enumerate}
    \item \label{item:char-pol}
    The characteristic polynomials of $h_\infty$ and $h_0^{-1}$ are 
\begin{equation}
\label{eq:char-polys}
q_\infty(T)=\prod_{j=1}^n (T-\exp{2\pi i \alpha_j})  \quad  \text{and} \quad q_0(T)=\prod_{j=1}^n (T-\exp{2\pi i \beta_j})
\end{equation} 
    \item \label{item:pseudo-reflec}The matrix $h_1$ is a pseudoreflection, i.e., $h_1-\mathrm{id}$ has rank one (where $\mathrm{id}$ denotes the identity matrix). 
\end{enumerate}
\smallskip

We now focus on irreducible hypergeometric local systems. 
It is well--known that a hypergeometric local system $\HH$ is irreducible if and only if $\alpha_i - \beta_j \notin \ZZ$ for all $i,j \in \{1, \dots, n\}$. Under this assumption, the isomorphism class of $\HH$ only depends on  $\alpha_i, \beta_j$ modulo $\ZZ$. This is implied by the following result:

\begin{prop}{\cite[Theorem 1.1]{Levelt}}
\label{pro:levelt} 
Assume $\alpha_i \neq \beta_j$ mod $\ZZ$ for all $i, j$.
Let $\rho \colon \pi_1(U, t_0) \to \GL(n,\CC)$ be any representation that satisfies \eqref{item:char-pol} and \eqref{item:pseudo-reflec}.
Let 
$
\rho_L \colon \pi_1(U, t_0) \to \GL(n,\CC)
$
be the representation determined by the assignment \[\rho_L(\gamma_\infty)\coloneqq M_\infty \qquad 
\rho_L(\gamma_0)\coloneqq M_0^{-1}\]
where $M_\infty$ and $M_0$ are the companion matrices of $q_\infty$ and $q_0$. 
Then, the two representations $\rho$ and $ \rho_L$ are isomorphic.
\end{prop}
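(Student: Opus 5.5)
We argue with the standard rigidity argument for Levelt's theorem, working directly with the matrices $A=h_\infty$ and $B=h_0^{-1}$. The relation $g_\infty g_1 g_0=1$ gives $h_1=h_\infty^{-1}h_0^{-1}=A^{-1}B$. Condition \eqref{item:pseudo-reflec} then says that $A-B$ has rank one. Since a representation of $\pi_1(U,t_0)$ is determined by the images of two of the three generators, it suffices to produce a single basis of $\CC^n$ in which $A$ and $B$ are the companion matrices $M_\infty$ and $M_0$. The same computation applied to $\rho_L$ shows that $M_\infty-M_0$ has rank one, so $\rho_L$ also satisfies \eqref{item:char-pol} and \eqref{item:pseudo-reflec}, and the claimed isomorphism follows.

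\textbf{Step 1: cyclic vector.} Let $W=\ker(A-B)$, which has dimension $n-1$. Set $V=\bigcap_{k=0}^{n-2}A^{-k}W$. Each intersection lowers the dimension by at most one, so $\dim V\ge 1$; choose a nonzero $w\in V$. By construction $A^k w\in W$ for $0\le k\le n-2$, so $A^{k}w=B A^{k-1}w$, and induction gives $A^k w=B^k w$ for $0\le k\le n-1$.

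Next we show that $w,Aw,\dots,A^{n-1}w$ is a basis. Suppose not, and let $S$ be the span of $w,\dots,A^{n-2}w$. Then $S$ is $A$-invariant. It is also $B$-invariant, because $B=A$ on $S\subset W$ in the range of indices we need, and because $B$ applied to $A^{n-2}w$ lies in $S$ by the invariance under $A$. So $S$ is a proper nonzero subspace invariant under both $A$ and $B$, with $A|_S=B|_S$ on a spanning set, hence $A|_S=B|_S$. Then $A|_S$ and $B|_S$ have a common characteristic factor. But the characteristic polynomials $q_\infty$ and $q_0$ have no common roots because $\alpha_i\ne\beta_j \bmod \ZZ$, a contradiction. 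This is the key step: it is where the hypothesis on the parameters enters.

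\textbf{Step 2: companion form.} In the basis $e_k=A^{k}w$, $k=0,\dots,n-1$, the matrix of $A$ is the companion matrix of its characteristic polynomial, namely $M_\infty$.

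For $B$ we use $B e_k=e_{k+1}$ for $k\le n-2$, which holds because $B A^k w=B^{k+1}w=A^{k+1}w$. The last column is $B e_{n-1}=B^n w$. Writing $w,Bw,\dots,B^{n-1}w$ in the same basis $(e_k)$, this is a cyclic basis for $B$, so $B$ is the companion matrix of its own characteristic polynomial $q_0$, that is, $M_0$.

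Therefore, in the basis $(e_k)$, $\rho(g_\infty)=M_\infty$ and $\rho(g_0)^{-1}=M_0$, which means $\rho\cong\rho_L$.

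The main obstacle is the cyclicity argument in Step 1: showing that a common invariant subspace forces a common eigenvalue, and checking carefully the boundary index $k=n-1$ when establishing $B$-invariance.
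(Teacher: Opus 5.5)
Your proof is correct. It also supplies something the paper does not: the paper gives no argument for this proposition and simply quotes Levelt \cite[Theorem 1.1]{Levelt}.

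What you wrote is essentially the standard linear-algebra proof (cf.\ Beukers--Heckman \cite{BH}):
\begin{itemize}
\item take $w\neq 0$ in $\bigcap_{k=0}^{n-2}A^{-k}\ker(A-B)$;
\item deduce $A^kw=B^kw$ for $0\le k\le n-1$;
\item use $e_k=A^kw=B^kw$ as a simultaneous cyclic basis, in which Cayley--Hamilton puts $A$ and $B$ in companion form with the prescribed characteristic polynomials.
\end{itemize}

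The only real variation is in the cyclicity step. The usual route first proves that the group generated by $A$ and $B$ is irreducible when $q_\infty$ and $q_0$ are coprime. It then observes that $\mathrm{span}(e_0,\dots,e_{n-1})$ is stable under both $A$ and $B$, hence is all of $\CC^n$. You instead note that a deficient Krylov space $S$ would lie inside $\ker(A-B)$. Then $A|_S=B|_S$, so any eigenvector of $A|_S$ is a common eigenvector of $A$ and $B$ with the same eigenvalue. This uses the coprimality of $q_\infty$ and $q_0$ directly and bypasses the separate irreducibility lemma, at no cost.

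Two small points of presentation.

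First, the sentence ``$S$ is $A$-invariant'' deserves a one-line justification. If $w,\dots,A^{n-1}w$ are dependent, let $m\le n-1$ be least with $A^mw\in\mathrm{span}(w,\dots,A^{m-1}w)$. Then every $A^jw$ lies in that span; in particular $A^{n-1}w\in S$.

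Second, your opening remark that $\rho_L$ itself satisfies (1) and (2) is not needed. Once Step 2 exhibits $\rho(g_\infty)=M_\infty$ and $\rho(g_0)=M_0^{-1}$ in a single basis, $\rho\cong\rho_L$ follows immediately because $\pi_1(U,t_0)$ is free on $g_0,g_\infty$. The remark does, however, make the conclusion independent of which companion-matrix convention is intended, provided the same convention is used for both matrices.
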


The matrices $M_\infty$ and $M_0$ lie in $\mathrm{GL}(n, E)$, where $E$ is the subfield of $\CC$ generated by the coefficients of $q_\infty$ and $q_0$. One says that $\HH$ is defined over $E$.  
\begin{rem}
\label{rem:rigid}
A local system is called \emph{rigid} if its local monodromy operators determine its monodromy representation. 
By Proposition \ref{pro:levelt}, irreducible hypergeometric local systems are rigid.    
\end{rem}

A convenient format to encode an isomorphism class of irreducible hypergeometric local systems is the rational function $Q=q_\infty/ q_0 \in E(T)$, called the \emph{family parameter}, which records $\alpha$ and $\beta$ modulo $\ZZ$. 
\smallskip 

In this paper, we restrict to the case $E=\QQ$, in which case, $q_\infty$ and $q_0$ are products of cyclotomic polynomials.
Then $Q$ can be rewritten 
as:
\begin{equation}\label{eq:Q-gamma}
Q(T)= 
\frac{\prod_{\gamma_i<0} (T^{-\gamma_i}-1)}{\prod_{\gamma_i>0} (T^{\gamma_i}-1)}
\end{equation}
for a vector $\gamma:=(\gamma_1,\ldots,\gamma_l)$ of nonzero integers $\gamma_i$ that sum to zero. 
We call such a vector a \emph{gamma vector}, and say that $\gamma$ \emph{represents} $Q$. 
We say that $\gamma$ is \emph{reduced} if no pair of its entries sums to zero, and we say that $\gamma$ is \emph{prime} if its entries are relatively prime.  

Note that, up to reordering entries, there is a unique reduced gamma vector representing $Q$, 
and any other gamma vector representing $Q$ is obtained by appending pairs of opposite integers to it. The reduced gamma vector representing $Q$ is not necessarily prime. Still, one can always build a prime gamma vector that represents $Q$ by appending integers that are coprime with the greatest common divisor of the entries. 
\begin{example}
\label{exa:Q-gamma}
If $\alpha=(1/2, 1/2), \beta=(0,0)$,
then \[
Q=\frac{(T^2-1)^2}{(T-1)^4}
\] thus $\gamma=(-2,-2,1,1,1,1)$  is the reduced gamma vector representing $Q$; it is prime. 

If $\alpha=(1/4, 3/4), \beta=(0, 1/2)$,
then \[
Q=\frac{T^4-1}{(T^2-1)^2}
\] thus $\gamma=(-4,2,2)$ is the reduced gamma vector representing $Q$; it is not prime. Instead, $\gamma^m=(-4,2,2,-(2m+1),2m+1)$, $m\in \ZZ$, are prime gamma vectors representing $Q$. 
\end{example}

It is well-known that irreducible 
hypergeometric local systems admit a motivic interpretation \cite{Andre-G-functions, Katz}. An irreducible hypergeometric local system $\HH$ defined over $\QQ$ also supports a unique rational variation of Hodge structure (VHS) whose Hodge numbers are computable combinatorially in terms of $\alpha, \beta$ \cite{AG, Fedorov}. 
Our Theorem \ref{thm:non-trivial-isomorphism} yields explicit geometric realisations of this VHS. These are built out of prime gamma vectors representing $Q$ as described in the next section.

\begin{rem} 
In \cite{RV}, the definition of gamma vector includes the condition that $\gamma$ is reduced. 
In this paper, we remove this condition from the definition, as, unlike $\gamma$ being prime, it is not necessary for Theorem \ref{thm:non-trivial-isomorphism}. In fact, removing this condition allows us to address cases that would be otherwise excluded, such as the second local system in Example \ref{exa:Q-gamma}.  
\end{rem}

\begin{rem}
    We refer the reader to \cite[\S 1.3]{GGFRV} for details on the relation among the geometric realisations built out of different prime gamma vectors representing $Q$. 
\end{rem}

\begin{example}[Examples \ref{exa:curve-ft}, \ref{exa:curve-numbers}, \ref{exa:curve-cone}, \ref{exa:curve-u} continued]
\label{exa:curve-gamma}
Consider the multisets \[\alpha=\left(\frac{1}{4},\frac{1}{3}, \frac{2}{3}, \frac{3}{4}\right) \quad \beta=\left(\frac{1}{5}, \frac{2}{5}, \frac{3}{5},\frac{4}{5}\right)\]
and let $\HH$ be the corresponding local system. The vector $\gamma=(-5,-2,3,4)$ is the reduced gamma vector representing $Q$. The Hodge weight of the VHS supported by $\HH$ is one, and the Hodge numbers are $h^{1,0}=h^{0,1}=2$. 
Note that $\gamma$ is the generator of the lattice $R$ of Example \ref{exa:curve-u}.
\end{example}

\begin{example}[Examples \ref{exa:chebyshev-ft}, \ref{exa:chebyshev-hodge}, \ref{exa:chebyshev-cone}, \ref{exa:chebyshev-u} continued]
\label{exa:chebyshev-gamma}
Consider the multisets
\begin{align*}
  \alpha = \left(\frac{1}{30}, \frac{7}{30}, \frac{11}{30}, \frac{13}{30}, \frac{17}{30}, \frac{19}{30}, \frac{23}{30}, \frac{29}{30}\right) \quad 
  \beta  = \left(1,\frac{1}{2}, \frac{1}{3}, \frac{2}{3}, \frac{1}{5}, \frac{2}{5}, \frac{3}{5}, \frac{4}{5}\right)  
\end{align*}
and let $\HH$ be the corresponding local system. The vector $\gamma=(-30,-1,6,10,15)$ is the reduced gamma vector representing $Q$. The Hodge weight of the VHS supported by $\HH$ is two, and the Hodge numbers are $h^{2,0}=h^{0,2}=0, h^{1,1}=8$. Note that $\gamma$ is the generator of the lattice $R$ of Example \ref{exa:chebyshev-u}.
\end{example}

\subsection{The pair associated to a gamma vector}
\label{sec:pair} 
Some of the statements in Sections \ref{sec:toric-models} and \ref{sec:sing-quasiD} can be found in \cite[Section 3, Section 1.3]{RV, GGFRV}; for completeness, we supply their proofs.

Let $\gamma=(\gamma_1, \dots, \gamma_l)$ be a prime gamma vector. 

\begin{dfn}
Let $z_1, \dots, z_l$ be homogeneous coordinates  on $(\CC^\times)^{l-1}$. Write $z^\gamma\coloneqq \prod_{j=1}^lz_j^{\gamma_j}$ and $\Gamma \coloneqq \prod_{j=1}^l \gamma_j^{\gamma_j}$.
Let $(Z, \pi)$ be the pair defined by: 
\begin{equation}
\label{eq:Z-pi} Z\coloneqq\left(\sum_{j=1}^l z_j=0\right) \subset \TT^{l-1}
\quad \text{and} \quad \pi\coloneqq \frac{z^\gamma }{\Gamma } \colon Z \to \CC^\times
\end{equation}
We call $(Z, \pi)$ \emph{the pair associated to $\gamma$}.\footnote{We normalise the map $\pi$ so that $t=1 \in \CC^\times$ is the only critical value, see Proposition \ref{pro:(Z,pi)-prop}. This is consistent with our definition of hypergeometric differential operators.} 
\end{dfn}
\smallskip

The pair $(Z, \pi)$ determines a one-parameter family of affine varieties $Z_t\coloneqq \pi^{-1}(t)$, $t \in \CC^\times$, given by
\begin{equation}
   Z_t=\left( z_1+\dots + z_l=0, \, \Gamma t= z^\gamma  \right) \subset (\CC^\times)^{l-1}
\end{equation} 
We write $\kappa:=l-3$ for the dimension of $Z_t$.
The varieties $Z_t$  (up to scaling the parameter) are the varieties $Y_t$ in \cite[Equation (2)]{AG} when $\gamma$ has only one negative entry, and the varieties $V_\lambda$ in \cite[Equation (1.1)]{BCM} if $\Gamma t$ is invertible modulo $p$.  

\subsubsection{Toric models} 
\label{sec:toric-models}
It is convenient to consider different presentations for $(Z, \pi)$, which, following \cite[$\S 4$]{RV}, we call  \emph{toric models}. We briefly recall the construction below.

Write $d \coloneqq l-2$, and pick an $l \times l$ integral matrix 
\begin{equation}\label{eq:matrix-gamma}
A= \begin{bmatrix}
1 & 1 & \cdots & 1 \\
m_{11} & m_{12} & \cdots & m_{1l} \\
\vdots & \vdots & \ddots & \vdots \\
m_{d1} & m_{d2} & \cdots & m_{dl} \\
k_1 & k_2 & \cdots & k_l
\end{bmatrix}
\end{equation}
such that 
\begin{enumerate}
    \item the first $l-1$ rows generate the kernel of the map $\gamma \colon \ZZ^l \to \ZZ$ given by $x \mapsto \gamma \cdot x$; 
    \item the last row satisfies $\sum_{j=1}^{l} \gamma_j k_j=1$.
\end{enumerate} 
Such a matrix $A$ exists since the entries $\gamma$ sum to zero and $\gamma$ is prime. Note that $\det A=\pm 1$. 
\smallskip

Denote by $m_j$, $j=1, \dots, l$, the columns of the $d \times l$ submatrix of ${A}$ with entries $m_{ij}$. Let $\TT^d \simeq (\CC^\times)^d$ be a torus with coordinates $x_1, \dots, x_d$, and let $t$ be a coordinate on $\CC^\times$.

\begin{lem}
\label{lem:change-of-variable}
The substitution 
   \[z_j \mapsto \gamma_j t^{k_j} x^{m_j}, \quad
j=1, \dots, l\] 
defines an isomorphism \mbox{$\TT^d \times \CC^\times \to (\CC^\times)^{l-1}$}. 
\end{lem}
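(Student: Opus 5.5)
The plan is to reduce the statement to the unimodularity of $A$. The homogeneous coordinates $z_1,\dots,z_l$ identify $(\CC^\times)^{l-1}$ with the quotient torus $(\CC^\times)^l/\CC^\times$, where $\CC^\times$ acts diagonally. Its character lattice is
\[
M_0=\{a\in\ZZ^l \mid \textstyle\sum_j a_j=0\}.
\]
The monomials $z^a$ with $a\in M_0$ are exactly the regular invertible functions, and $\CC[M_0]$ is its coordinate ring. A morphism of tori that is the identity on a constant factor, here the factor $\prod\gamma_j$, is an isomorphism precisely when the induced map on character lattices is. So I will first strip off the constants and then compare lattices.

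First I separate the constants. The map $\Phi\colon (x,t)\mapsto [\gamma_1 t^{k_1}x^{m_1}:\dots:\gamma_l t^{k_l}x^{m_l}]$ is the composite of the monomial map $\Psi\colon (x,t)\mapsto [t^{k_1}x^{m_1}:\dots:t^{k_l}x^{m_l}]$ with the translation $[z_j]\mapsto[\gamma_j z_j]$. The translation is an automorphism of $(\CC^\times)^{l-1}$ because all $\gamma_j\neq0$. It therefore suffices to show that $\Psi$ is an isomorphism.

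Next I compute the pullback on characters. For $a\in M_0$ we have
\[
\Psi^*(z^a)=\prod_j (t^{k_j}x^{m_j})^{a_j} = t^{\,k\cdot a}\,\prod_{i=1}^d x_i^{(m_i\cdot a)},
\]
where $m_i$ denotes the $i$th row $(m_{i1},\dots,m_{il})$. The induced lattice map is therefore $\phi\colon M_0\to\ZZ^{d+1}$, $a\mapsto (m_1\cdot a,\dots,m_d\cdot a,\,k\cdot a)$. This is the matrix $A$ applied to $a$ with the first coordinate, $\sum a_j=0$, dropped.

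The remaining step is to show that $\phi$ is bijective. Since $\det A=\pm1$, the map $A\colon\ZZ^l\to\ZZ^l$ is bijective. The lattice $M_0$ is exactly the preimage of $\{0\}\times\ZZ^{d+1}$ under $A$, because the first row of $A$ is $(1,\dots,1)$. Hence $A$ restricts to a bijection $M_0\to\{0\}\times\ZZ^{d+1}$, which is $\phi$. The fact $\det A=\pm1$ is asserted in the text; if it needs justification, I would argue as follows. The first $l-1$ rows form a basis of $\ker\gamma$, and $\ker\gamma$ is saturated, hence a primitive sublattice. Since $k\cdot\gamma=1$ and $\gamma$ kills the other rows, every $x\in\ZZ^l$ decomposes as $x=(x\cdot\gamma)k+(x-(x\cdot\gamma)k)$, with the second summand in $\ker\gamma$. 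Thus the rows span $\ZZ^l$. The fact that $\Psi$ lands in $(\CC^\times)^{l-1}$ is automatic. The only mildly delicate point, and the one I would be careful about, is the identification of the coordinate ring of $(\CC^\times)^{l-1}$ in homogeneous coordinates with $\CC[M_0]$. This is where the first row of $A$ is used.
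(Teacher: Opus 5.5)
Your proposal is correct and is essentially the paper's argument: after discarding the constants $\gamma_j$, both reduce the claim to the unimodularity of $A$. The paper dehomogenises at $z_h=1$ and shows that the minor $A'_h$, with columns $(m_j-m_h,k_j-k_h)$ for $j\neq h$, is unimodular; this is exactly the matrix of your $\phi\colon M_0\to\ZZ^{d+1}$ in the basis $\{e_j-e_h\}_{j\neq h}$ of $M_0$, so your basis-free observation $A(M_0)=\{0\}\times\ZZ^{d+1}$ simply avoids constructing the auxiliary matrices $A_h$.
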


\begin{proof}
For $h \in \{1, \dots, l\}$, let $A_h$ be the $l \times l$ integral matrix with columns

\[
\label{eq:A_h}
    \begin{pmatrix}
1  \\
m_{j}-m_h\\
k_j -k_h
\end{pmatrix} \quad 
     j=1, \dots, l
\]
\vspace{0.01cm}

\noindent The matrix $A_h$ satisfies the conditions (1) and (2) above. Indeed, 
    \[ \sum_{j=1}^l \gamma_j (m_j-m_h)= \sum_{j=1}^l \gamma_j m_j-m_h \cdot \sum_{j=1}^l\gamma_j=0
    \]
    Moreover, the first $l-1$ rows of $A_h$ span $\ker(\gamma)$ as, for all $i=1, \dots, d$, 
    \[
    (m_{i1}, \dots, m_{il})=(m_{i1}-m_{ih}, \dots, m_{il}-m_{ih})+m_{ih}\cdot (1, \dots, 1)
    \]
    Finally, 
    \[
    \sum_{j=1}^l \gamma_j (k_j-k_h)= \sum_{j=1}^l \gamma_j k_j -k_h \cdot \sum \gamma_j=1
    \]
We have in particular that $\det A_h=\pm1$. 

Let $A^{'}_h$ be the $(l-1) \times (l-1)$ submatrix of $A_h$ obtained by removing the first row and the $h$th column of $A_h$. Since the $h$th column of $A_h$ has first entry equal to $1$ and all other entries equal to $0$, we have that $\det A_h= \pm \det A^{\prime}_h$. 
It follows that $\det A^{\prime}_h=\pm 1$.
This is equivalent to the fact that the columns of $A^{\prime}_h$, that is, the vectors 
\[
\begin{pmatrix}
m_{j}-m_h\\
k_j -k_h
\end{pmatrix} \quad 
     j=1, \dots, l, j \neq h
\] span $\ZZ^{l-1}$. This proves the statement. 
\end{proof}

In the coordinates $x_1, \dots, x_d,  t$, the pair $(Z, \pi)$ takes the form: 
\begin{equation}
\label{eq:toric-model}
   Z= \left(  \sum_{j=1}^l \gamma_j t^{k_j} x^{m_j}=0
   \right)
\subset \TT^d \times \CC^\times  \qquad \pi=\mathrm{pr}_t \colon Z \to \CC^\times 
\end{equation}
where 
$\mathrm{pr}_t$ denotes the restriction to $Z$ of the projection $\TT^d \times \CC^\times \to \CC^\times$ to $t$. Equation \eqref{eq:toric-model} is called a \emph{toric model} of $(Z, \pi)$. 

In what follows, we denote by $f \in \CC[x_1^{\pm 1}, \dots, x_d^{\pm 1}, t^{\pm 1}]$ the Laurent polynomial defining  $Z$ in \eqref{eq:toric-model}, and, for $t \in \CC^\times$ fixed, we write $f_t \coloneqq f(\cdot,t) \in \CC[x_1^{\pm 1}, \dots, x_d^{\pm 1}]$.
Moreover, we denote by $\Delta \subset \RR^d$ the convex hull of the vectors $m_j$, that is, the Newton polytope of the Laurent polynomials  $f_t$.


\smallskip

We conclude by emphasizing a few properties of $A$ and $\Delta$ and by discussing some examples.

\begin{rem}
\label{rem:A-matrix}
Equation \eqref{eq:Z-pi} defines $(Z, \pi)$ without any choice, while Equation \eqref{eq:toric-model} depends on the choice of the matrix ${A}$. 
This matrix is unique up to left multiplication by matrices of the form:
\begin{displaymath}
\left( \begin{array}{ccc}
1 & 0 & 0 \\
a & B & 0\\
c & d & 1
\end{array} \right)
\end{displaymath}
where $a\in \ZZ^{d \times 1}$, $B \in \mathrm{GL}(d, \ZZ)$, $c\in \ZZ$, $d \in \ZZ^{1 \times d}$. In particular, the polytope $\Delta \subset \RR^d$ is well-defined up to affine linear transformations.

For example, given $A$, the matrix $A_h$ considered in the proof of Lemma \ref{lem:change-of-variable} is obtained by the choices $a=-m_h, B=\mathrm{Id}$, $c=-k_h$, $d=0$.
\end{rem}

\begin{rem}
\label{rem:gamma-relations}
The vector $\gamma$ spans the lattice of integral relations among the vectors 
    \begin{equation}
        \label{eq:KT}
    \begin{pmatrix}
1  \\
m_{j} 
\end{pmatrix} \in \ZZ^{l-1}\quad 
     j=1, \dots, l
     \end{equation}

This implies that the vectors $m_j$ are all distinct. Equivalently, for all $t\in \CC^\times$,  $f_t$ is the sum of $l$ distinct monomial terms. 
\end{rem}

\begin{rem}
\label{rem:kernel}
The first $l-1$ rows of $\mathcal{A}$ span a saturated sublattice of $\ZZ^l$ of rank $l-1$. 
Equivalently, the maximal minors of the $(l-1)\times l$ submatrix of $A$ defined by the first $l-1$ rows are relatively prime. In particular, the vectors 
\eqref{eq:KT}
    span $\ZZ^{l-1}$. This is easily seen to be equivalent to the fact that, for any $k \in \{1, \dots, l\}$, the vectors $m_j-m_k$, $j=1, \dots, l$, span $\ZZ^d$. 
\end{rem}

\begin{rem}
\label{rem:mj-polytope}
The polytope $\Delta$ is full-dimensional and simplicial\footnote{A polytope is called simplicial if all its proper faces are simplices.}. 
Indeed, since for any fixed $k$ the vectors $m_j-m_{k}$ generate $\ZZ^d$, $\Delta$ has dimension $d$. Then, either $\Delta$ has $d+1$ vertices, i.e., it is a simplex, or it has $d+2$ vertices. In the second case, any facet of $\Delta$ must be a simplex: if a facet $F \subset \Delta$ had $d+1$ vertices, say $m_2, \dots, m_l$, then  for all $j \neq 1$,  $\langle m_j, u\rangle =b$ for some $u \in \ZZ^d, b \in \ZZ$, while $\langle m_1, u\rangle =a>b$; on the other hand, 
\[
0=\left\langle \sum_{j=1}^l \gamma_j m_j, u \right\rangle =\sum_{j=1}^l \gamma_j \langle m_j, u \rangle=  \gamma_1a+\sum_{j=2}^l \gamma_j b   = \ \gamma_1 (a -b)
\] 
implies that $a=b$ since $\ga_1 \neq 0$. 

It is simple to see that $\Delta$ is a simplex if and only if either $\gamma$ or $-\gamma\coloneqq (-\gamma_1, \dots, -\gamma_l)$  has only one negative entry. More precisely, if $\gamma_k$ is the only negative/positive entry of $\gamma$, then $m_k$ lies in the interior of $\Delta$. 
\end{rem}

\begin{example}[Examples \ref{exa:curve-ft}, \ref{exa:curve-numbers}, \ref{exa:curve-cone}, \ref{exa:curve-u}, \ref{exa:curve-gamma} continued]
\label{eq:curve-pair}
Consider the gamma vector $\gamma= (-5,-2,3,4)$. One can pick the matrix 
\begin{equation*}
   A= \begin{bmatrix}
        1 & 1 & 1 & 1 \\
        2 & 0 & 2 & 1 \\
        0 & 3 & 2 & 0 \\
        0 & 1 &  1 & 0
    \end{bmatrix}.
\end{equation*}
Then, one obtains the Laurent polynomial
    \[
    f=-5x_1^2 -2t  x_2^3+ 3tx_1^2x_2^2+4x_1
    \] 
Then the pair $(Z, \pi)$ is the one-parameter family of curves of Example \ref{exa:curve-ft}. 
\end{example}
\begin{example}[Examples \ref{exa:chebyshev-ft}, \ref{exa:chebyshev-hodge}, \ref{exa:chebyshev-cone}, \ref{exa:chebyshev-u}, \ref{exa:chebyshev-gamma} continued] 
Consider the gamma vector $\gamma= (-30,-1,6,10,15)$.  One can pick the matrix 
\begin{equation*}
   A= \begin{bmatrix}
        1 & \textcolor{white}{-}1 & \textcolor{white}{-}1 & \textcolor{white}{-}1 & \textcolor{white}{-}1\\
        1 & \textcolor{white}{-}0 & \textcolor{white}{-} 5 & \textcolor{white}{-}0 & \textcolor{white}{-} 0\\
        1 & \textcolor{white}{-}0 & \textcolor{white}{-} 0 & \textcolor{white}{-} 3 & \textcolor{white}{-} 0\\
        1 & \textcolor{white}{-}0 & \textcolor{white}{-}0 & \textcolor{white}{-}0 & \textcolor{white}{-}2\\
        0 & -1 &  \textcolor{white}{-}0 & \textcolor{white}{-}0 &\textcolor{white}{-}0
    \end{bmatrix}.
\end{equation*}
Then one obtains the Laurent polynomial
\[
f=-30 x_1x_2x_3 -\frac{1}{t}+6x_1^5+10 x_2^3 +15 x_3^2
\]
The pair $(Z, \pi)$ is the one-parameter family of surfaces of Example \ref{exa:chebyshev-ft}. The vector $\gamma$ and the family $(Z, \pi)$ have been studied in \cite{FRV-duke}.
\end{example}
\smallskip

\subsubsection{Singularities and quasi-$\Delta$-regularity.} 
\label{sec:sing-quasiD}

By choosing a toric model \eqref{eq:toric-model}, we may view the pair $(Z, \pi)$ as a one-parameter family of affine hypersurfaces $(Z_t)_{t \in \CC^\times}$ in $\TT^d$. 
We now discuss the singularities of the family and the quasi-$\Delta$ regularity of $Z_t$.  

\begin{prop}
\label{pro:(Z,pi)-prop}
The following statements hold:
    \begin{enumerate}
    \item For all $t \neq 1$, $Z_t$ is nonsingular. 
    \item   $Z_1$ has a unique ODP. 
    \item  For all $t \in \CC^\times$, $Z_t$ is quasi--$\Delta$--regular. In particular, for all $t \neq 1$, $Z_t$ is $\Delta$--regular. 
    \item $Z$ is smooth. 
    \end{enumerate}
\end{prop}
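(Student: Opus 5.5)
The plan is to work in the homogeneous model \eqref{eq:Z-pi} for (1), (2) and (4), and in a toric model \eqref{eq:toric-model} for (3).

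\emph{Statements (1) and (2).} $Z_t$ is the complete intersection $\sum_j z_j=0$, $z^\gamma=\Gamma t$ in $(\CC^\times)^{l-1}$. On the torus, $d\log(z^\gamma)=\sum_j \gamma_j\, dz_j/z_j$. Hence a point of $Z_t$ is singular exactly when the differentials $\sum_j dz_j$ and $\sum_j \gamma_j\, dz_j/z_j$ are proportional modulo the Euler direction. Using homogeneity, this amounts to $\gamma_j/z_j=\lambda$ for all $j$, so $z_j=\gamma_j/\lambda$.

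Conversely, the point $z_j=\gamma_j$ satisfies $\sum_j z_j=\sum_j\gamma_j=0$ and $z^\gamma=\Gamma$. So it lies on $Z_1$, and it is the only critical point of $\pi$. This gives (1) and the uniqueness part of (2).

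For the ODP, I will compute the Hessian of $\log \pi=\sum_j\gamma_j\log z_j$ restricted to the hyperplane $\sum_j z_j=0$ in the affine chart. At $z=\gamma$ the quadratic form is $-\sum_j \gamma_j^{-1}w_j^2$ on $\{\sum_j w_j=0\}$, taken modulo the Euler direction $w=\gamma$. I will check nondegeneracy directly. Its radical would consist of $w$ with $w_j/\gamma_j=\mu$ constant for all $j$, so $w\in\CC\gamma$, which is exactly the direction quotiented out. Thus $\pi$ has a Morse critical point there, and $Z_1$ has an ODP.

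\emph{Statement (4).} $Z$ is the hypersurface $\sum_j z_j=0$ in the torus, and its gradient $(1,\dots,1)$ never vanishes modulo the Euler relation. (In the toric model, $Z$ is just a coordinate-transformed smooth hyperplane section.) So $Z$ is smooth.

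\emph{Statement (3).} By Remark \ref{rem:mj-polytope}, every proper face $F$ of $\Delta$ is a simplex whose vertices are among the $m_j$. By Remark \ref{rem:gamma-relations}, the $m_j$ in $F$ are affinely independent, because $\gamma$ spans all relations and has all entries nonzero, so no relation can be supported on a proper subset. Hence $f_{t|F}=\sum_{j\in F}\gamma_j t^{k_j}x^{m_j}$ has affinely independent exponents. After a monomial change of coordinates on the subtorus spanned by $F$, it becomes $c_0+c_1y_1+\dots+c_r y_r$ with $c_i\neq 0$. This is a smooth hypersurface, possibly times a torus.

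The one subtlety is lattice index. The $m_j-m_k$ for $j\in F$ may not span a saturated sublattice, so the monomial map is an isogeny rather than an isomorphism. I expect smoothness to hold anyway, since the gradient of $\sum c_i x^{a_i}$ with independent exponents vanishes only if all the $x^{a_i}$ vanish. Concretely, the Euler-type relations $\sum_i c_i a_i x^{a_i}=0$ together with $\sum_i c_i x^{a_i}=0$ and affine independence force $x^{a_i}=0$, which is impossible on the torus. So $Z_t$ is quasi-$\Delta$-regular for every $t$, and combining with (1) gives $\Delta$-regularity for $t\neq1$.

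The main obstacle is the careful nondegeneracy computation for the ODP, including checking that the rank matches $\dim Z_1=l-3$. The face argument in (3) is essentially formal.
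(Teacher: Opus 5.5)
Your proof is correct. For (1), (2) and (4) it takes a different route from the paper; for (3) it is the paper's argument.

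\textbf{The paper's route.} The paper works in a toric model throughout. It proves the more general Proposition~\ref{prop:u-result} for the whole family $\mathcal{F}=\sum_j u_j x^{m_j}$ over $(\CC^\times)^l$ and then specialises $u_j=\gamma_j t^{k_j}$.
\begin{enumerate}
\item A singular point forces $(u_jx^{m_j})_j=c\gamma$.
\item Uniqueness of the singular point uses that the $m_j-m_1$ span $\ZZ^d$.
\item The ODP comes from the identity $\det(A\,\mathrm{diag}(\gamma)A^T)=\prod_j\gamma_j$. This gives $\det H=-\prod_j\gamma_j\neq 0$ for the logarithmic Hessian $H=M\,\mathrm{diag}(\gamma)M^T$.
\item Smoothness of $Z$ uses invertibility of $A$.
\end{enumerate}

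\textbf{Your route.} You work in the homogeneous model and transport the conclusions via Lemma~\ref{lem:change-of-variable}. There $Z$ is an open piece of a hyperplane, so (4) is immediate. The critical locus of $\pi$ is visibly the single point $[\gamma]$; the lattice facts the paper invokes are absorbed into the change-of-variable lemma. Nondegeneracy becomes a one-line radical computation for $-\sum_j w_j^2/\gamma_j$ on $\{\sum_j w_j=0\}$.

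This is legitimate for two reasons:
\begin{enumerate}
\item The cone $\{\sum_j z_j=0\}$ is linear, so the Hessian of the restriction is the restriction of the Hessian.
\item $\log\pi$ is scale invariant, so the Euler direction $\CC\gamma$ lies in the radical and the form descends to $T_pZ$.
\end{enumerate}

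\textbf{Points to tighten.}
\begin{enumerate}
\item To match the paper's notion of ODP, add one step. Since $Z$ is smooth and $\partial_x f_1$ vanishes at the singular point, you get $\partial_t f\neq 0$ there. The implicit function theorem then identifies the Hessian of $\pi|_Z$ with a nonzero multiple of the Hessian of $f_1$. Hence a Morse critical point of $\pi$ is exactly an ODP of $Z_1$.
\item In your closing remark, the nondegenerate Hessian should have rank $\dim Z=l-2$, not $\dim Z_1=l-3$. Your radical computation gives exactly this, since $\dim\{\sum_j w_j=0\}/\CC\gamma=l-2$.
\item For (3), your direct argument settles the lattice-index worry, so the isogeny discussion can be dropped. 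The linear system $\sum_{m_j\in F}(1,m_j)\,y_j=0$ has only the trivial solution, because every relation is a multiple of $\gamma$, which has full support.
\end{enumerate}

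\textbf{What each approach buys.} The paper's version gives $\Delta$-regularity for the full GKZ family off the discriminant $u^\gamma=\Gamma$, and this is reused in Section~\ref{sec:proof-thm-1}. Yours is more intrinsic and shorter for the one-parameter family.
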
 

Note that Proposition \eqref{pro:(Z,pi)-prop} implies that $\pi \colon Z \to \CC^\times$ (as well as the restriction of $\pi$ to the preimage of any affine open subset of $\CC^\times$) satisfies the assumptions of Theorem \ref{thm:compactifying-delta}. Moreover, the restriction of $\pi$ over $U\coloneqq \CC^\times \setminus \{1\}$ is smooth, so the results of Sections \ref{subsec:relative-coho} and \ref{sec:local-systems} apply to this map. 
\smallskip

In what follows, we prove a slightly more general result, from which Proposition \ref{pro:(Z,pi)-prop} follows almost immediately. The symbols $\Gamma$, ${A}$,  $m_j$, $j=1, \dots, l$, and $\Delta$ have the same meanings as in Section \ref{sec:toric-models}.

\begin{prop}
\label{prop:u-result} 

Let $\mathcal{F}=\sum_{j=1}^l u_j x^{m_j} \in \CC[x_1^{\pm 1}, \dots, x_d^{\pm 1}, u_1^{\pm 1}, \dots, u_1^{\pm 1}]$. Let $\mathcal{Z}=Z_\mathcal{F}$ be the zero locus of $\mathcal{F}$ in $\TT^d \times (\CC^\times)^l$. 
For $u=(u_1, \dots, u_l) \in (\CC^\times)^l$ fixed, let $\mathcal{F}_u=\mathcal{F}(\cdot,u)$, and let $\mathcal{Z}_u=Z_{\mathcal{F}_u}$ be the zero of $\mathcal{F}_u$ in $\TT^d$. Then:   
    \begin{enumerate}
    \item  If $\prod_{j=1}^l u_j^{\gamma_j} \neq \Gamma$, then $\mathcal{Z}_u$ is smooth. 
    \item  If $\prod_{j=1}^l u_j^{\gamma_j} = \Gamma$, then $\mathcal{Z}_u$ has a unique ODP.  
    \item  For all $u \in (\CC^\times)^l$, $\mathcal{Z}_u$ is quasi $\Delta$-regular. In particular, if $\prod_{j=1}^l u_j^{\gamma_j} \neq \Gamma$, then $\mathcal{Z}_u$ is $\Delta$-regular. 
    \item $\mathcal{Z}$ is smooth.
    \end{enumerate}
\end{prop}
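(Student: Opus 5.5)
The plan is a direct critical-point computation in the torus, using that the vectors $(1,m_j)$ span $\ZZ^{l-1}$ with the single relation $\gamma$ (Remarks \ref{rem:gamma-relations} and \ref{rem:kernel}).

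\emph{Singular points of $\mathcal{Z}_u$.} A point $x\in\TT^d$ is singular on $\mathcal{Z}_u$ exactly when $\mathcal{F}_u(x)=0$ and $x_i\partial_{x_i}\mathcal{F}_u(x)=0$ for $i=1,\dots,d$. Set $w_j\coloneqq u_jx^{m_j}\in\CC^\times$. These conditions say that $\sum_j w_j(1,m_j)=0$. Since the relations among the $(1,m_j)$ are spanned by $\gamma$, this holds iff $w=\lambda\gamma$ for some $\lambda\in\CC^\times$.

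Conversely, given $u$, we ask when there are $x$ and $\lambda$ with $u_jx^{m_j}=\lambda\gamma_j$ for all $j$. Since the $(1,m_j)$ span $\ZZ^{l-1}$ and $\gamma$ generates the relations, the character map $(\lambda,x)\mapsto(\lambda x^{m_j})_j$ from $\CC^\times\times\TT^d$ to $(\CC^\times)^l$ is injective. Its image is exactly the kernel of the character $v\mapsto\prod v_j^{\gamma_j}$; the primality of $\gamma$ gives connectedness. So a solution exists iff $\prod_j(\gamma_j/u_j)^{\gamma_j}=1$, i.e.\ $\prod u_j^{\gamma_j}=\Gamma$. In that case $(\lambda,x)$ is unique, which gives uniqueness of the singular point. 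This proves (1) and the uniqueness part of (2).

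\emph{ODP.} At the singular point, I compute the Hessian in logarithmic coordinates $y_i=\log x_i$. Up to the factor $\lambda$, it is $\sum_j\gamma_j m_jm_j^T$. Using $\sum\gamma_j m_j=0$ and $\sum\gamma_j=0$, this equals $\sum_j\gamma_j(m_j-m_h)(m_j-m_h)^T$ for any fixed $h$.

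To show nondegeneracy, suppose $v$ lies in the kernel and set $a_j=\langle m_j-m_h,v\rangle$. The kernel condition gives $\sum_j\gamma_j a_j(m_j-m_h)=0$. Together with $\sum\gamma_j a_j\cdot 1$ (the $h$-term is $0$), this should give a relation among the $(1,m_j)$ forced to be proportional to $\gamma$. Hence $\gamma_j a_j=c\gamma_j$, so $a_j=c$ for all $j$. Since $a_h=0$, we get $c=0$, so $a\equiv0$. As the $m_j-m_h$ span, $v=0$.

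I expect this step to need care, namely checking that $\sum_j\gamma_j a_j=0$ so that the relation really is among the $(1,m_j)$. This follows from $\sum_j \gamma_j a_j=\langle\sum\gamma_j(m_j-m_h),v\rangle=0$. Since the point is a zero of $\mathcal{F}_u$ with vanishing gradient and nondegenerate Hessian, it is an ODP.

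\emph{Quasi-$\Delta$-regularity (3).} Let $F$ be a proper face of $\Delta$ of positive dimension. By Remark \ref{rem:mj-polytope}, $F$ is a simplex, so its vertices $m_j$, $j\in J$, are affinely independent. Then $\mathcal{F}_{u|F}$ is a sum of monomials with affinely independent exponents. A singular point would give $\sum_{j\in J}w_j(1,m_j)=0$ with all $w_j\neq0$, which is impossible. So $Z_{\mathcal{F}_{u|F}}$ is smooth; the same argument with $u$ satisfying (1) gives $\Delta$-regularity.

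\emph{Smoothness of the total space (4).} We have $\partial\mathcal{F}/\partial u_j=x^{m_j}\neq0$ on the torus, so $\mathcal{Z}$ is smooth everywhere.
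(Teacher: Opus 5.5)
Your proof is correct. For (1), (3) and (4) it follows the paper's argument. Critical points of $\mathcal{F}_u$ correspond to vectors $(u_jx^{m_j})_j$ in the relation space $\CC\gamma$ of the $(1,m_j)$. Faces are handled by linear independence of the $(1,m_j)$ lying on them. Statement (4) is just $\partial\mathcal{F}/\partial u_j=x^{m_j}\neq 0$.

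The real difference is how you show the Hessian in (2) is nondegenerate.
\begin{itemize}
\item The paper normalises to $u=\gamma$, $x=(1,\dots,1)$. It then reads off $\det H=\pm\prod_j\gamma_j$ from the block shape of $A\,\mathrm{diag}(\gamma)\,A^T$ together with $\det A=\pm 1$. This needs the auxiliary row $(k_j)$ of $A$, but produces an explicit value for the Hessian determinant.
\item Your kernel argument goes the other way: a null vector $v$ gives the relation $(\gamma_ja_j)_j$, so the $a_j$ are constant and equal to $a_h=0$. It is coordinate-free. It uses only that the complex relations are $\CC\gamma$ with every $\gamma_j\neq 0$, and that the $m_j-m_h$ span. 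It yields nondegeneracy without computing the determinant.
\end{itemize}
You also describe $(\lambda,x)\mapsto(\lambda x^{m_j})_j$ as an isomorphism onto $\{v:\prod_j v_j^{\gamma_j}=1\}$. This makes explicit the \emph{existence} of the singular point when $\prod_j u_j^{\gamma_j}=\Gamma$, which the paper uses only tacitly when it rescales to $u=\gamma$.

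A minor remark on (3). The phrase ``a sum of monomials with affinely independent exponents'' tacitly uses that no non-vertex $m_j$ lies on a proper face. This is true by Remark~\ref{rem:mj-polytope}, since in the simplex case the extra $m_k$ is interior. You can avoid simpliciality altogether. The set $\{j : m_j\in F\}$ is a proper subset of $\{1,\dots,l\}$. Any proper subset of the $(1,m_j)$ is linearly independent, because the only relation, $\gamma$, has no zero entries.
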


\begin{proof}[Proof of Proposition \ref{prop:u-result}] 
Fix $u \in (\CC^\times)^l$.
For $i=1, \dots, d$, we have
\begin{equation}
 x_i \frac{\partial  \mathcal{F}_u }{\partial x_i} = \sum_{j=1}^l m_{ij} u_j x^{m_j}
\end{equation}
The variety $\mathcal{Z}_u$ is singular if and only there is a point  $x=(x_1, \dots, x_d) \in \TT^d$ that satisfies the equations
\begin{equation}
\label{eq:singularity}
\sum_{j=1}^l u_j x^{m_j}=0,  \quad 
  \sum_{j=1}^l m_{ij}  u_j x^{m_{j}}=0 \ \text{ for } i =1,\dots,d 
\end{equation} 
which we can rewrite as
\begin{equation*}
\label{eq:1m-system}
\begin{pmatrix}
1 & \dots & 1\\
m_1 & \dots & m_l
\end{pmatrix} \begin{pmatrix}
u_1 x^{m_1}\\
\vdots\\
u_l x^{m_l}
\end{pmatrix} = 0
\end{equation*}
Then, by Remark \ref{rem:gamma-relations}, $x \in \TT^d$ satisfies \eqref{eq:singularity} if and only if 
    \begin{equation}
    \label{eq:y-gamma}
    (u_1 x^{m_1}, \dots, u_l x^{m_l}) =(c \gamma_1,  \dots, c \gamma_l)\end{equation} for some constant $c \in \CC^\times$. 
If such $x$ exists, then 
\[
\prod_{j=1}^l u_j^{\gamma_j}= \prod_{j=1}^l \left( u_j x^{m_j} \right)^{\gamma_j}= \prod_{j=1}^l\left(  c\gamma_j\right)^{\gamma_j}= c^{\sum_{j=0}^l \gamma_j} \cdot \Gamma=\Gamma
\]
which proves $(1)$. 

Assume now that $u \in (\CC^\times)^l$ is such that $\prod_{j=1}^l u_j^{\gamma_j}=\Gamma$. 
Then there is a unique $x \in \TT^d$ satisfying \eqref{eq:y-gamma}. 
Indeed, if \eqref{eq:y-gamma} holds, then, for $j=1, \dots, l$, 
\[ x^{m_j-m_1}=\frac{u_1 \gamma_j}{\gamma_1 u_j}
\]
Since the vectors $m_j-m_1$, $j=2, \dots, l$ generate generate $\ZZ^d$ (see Remark \ref{rem:kernel}), these identities determine $x=(x_1, \dots, x_d)$ uniquely (note that $c=u_1x^{m_1}/\gamma_1$ is also uniquely determined).
This shows that, if $\prod_{j=1}^l u_j^{\gamma_j}=\Gamma$, then $\mathcal{Z}_u$ has a unique singular point. To show $(2)$, it remains to show that the singular point is an ODP. 

Up to changing the variables $x_i$ by some constants, we may assume that $u=\gamma$. Then the singular point of $\mathcal{Z}_u$ is $x=(1, \dots, 1) \in \TT^d$. The Hessian of $\mathcal{F}_u$ at $x=(1, \dots, 1)$ is the matrix $H$ with entries 
\begin{equation}\label{eq:hessian}
H_{i,j}= \sum_{h=1}^l \gamma_h m_{i,h} m_{j,h}  x^{m_h}\bigg|_{x=1} = \sum_{h=1}^l \gamma_h m_{i,h} m_{j,h}.
\end{equation}
Equivalently, letting $M$ be the $d \times l$ submatrix of ${A}$ with entries $m_{ij}$ and $\mathrm{diag}(\gamma)$  be the diagonal matrix with diagonal entries $\gamma_1, \dots, \gamma_l$, we have that $H=M \cdot \mathrm{diag}(\gamma) \cdot M^T$.
Moreover, 
the product ${A} \cdot \mathrm{diag}(\gamma)  \cdot {A}^T$ is of the form: 
\begin{equation}
\label{eq:prod-H}
{A} \cdot \mathrm{diag}(\gamma)  \cdot {A}^T =\begin{bmatrix}
0 & 0 & 1 \\
0 & H & * \\
1 & * & *
\end{bmatrix} 
\end{equation}
Since $\det({A})=\pm 1$, one has $\det({A} \cdot \mathrm{diag}(\gamma)  \cdot {A}^T)=\prod_{j=1}^l \gamma_j$. Combining this with \eqref{eq:prod-H} yields  $\prod_{j=1}^l \gamma_j=-\det(H)$,  thus $H$ is invertible. This shows that $x$ is an ODP.

To prove $(3)$, we verify that, for each face $F$ of $\Delta$ of dimension $r \in (0,d)$, the zero locus of ${\mathcal{F}_u}_{|F}$ in $\TT^d$ is smooth. 
For any face $F\subset \Delta$, the zero locus of ${\mathcal{F}_u}_{|F}$ in $\TT^d$ is singular if and only if there exists $x \in \TT^d$ such that 
\begin{equation}
\label{eq:singularity-face}
   \sum_{m_j \in F } u_j x^{m_{j}}=0, \quad  
    \sum_{m_j \in F }   m_{ij} u_j x^{m_j}=0  \ \text{ for } i=1,\dots,d
\end{equation}
Let $F\subset \Delta$ be a face of dimension $r$. Then $F$ is a $r$-simplex, i.e., it has $r+1$ vertices $m_{j_1}, \dots, m_{j_{r+1}}$, and does not contain any other vertex $m_j$ (see Remark \ref{rem:mj-polytope}). 
Then, we can rewrite  
\eqref{eq:singularity-face} as 
\begin{equation}
\label{eq:ls-face}
\begin{pmatrix}
1 & \cdots  & 1\\
m_{j_1} & \cdots & \ m_{j_{r+1}}
\end{pmatrix}\begin{pmatrix}
u_1 x^{m_1}\\
\vdots\\
u_l x^{m_l}
\end{pmatrix} = 0
\end{equation}
Since any maximal minor of the $(l-1)\times l$ submatrix of ${A}$ defined by its first $l-1$ rows is nonzero (see Remark \ref{rem:kernel}) and $r+1<l-1$,  \eqref{eq:ls-face} does not admit any non-trivial solution.  

Statement $(4)$ follows immediately from the fact that, for all $h=1, \dots, l$,
\[u_h \frac{\partial \mathcal{F}} {\partial u_h}= u_h x^{m_h}\]
and that, for each $k$ fixed, the vectors $m_j-m_k$ generate $\ZZ^d$.

\end{proof}

\begin{proof}[Proof of Proposition \ref{pro:(Z,pi)-prop}]
Statements $(1), (2), (3)$ follow immediately from the corresponding statements in Proposition \ref{prop:u-result} by setting $u_j=\gamma_j t^{k_j}$ for all $j=1, \dots, l$. 
To prove $(4)$, one can use the same technique of the proof of Proposition \ref{prop:u-result}$(3)$, relying now on the fact that the matrix ${A}$ is invertible.
\end{proof}

\begin{rem}
In view of Examples \ref{exa:curve-gamma} and \ref{exa:chebyshev-gamma}, the (quasi-)$\Delta$-regularity properties of the curves of Example \ref{exa:curve-ft} and the surfaces of Example \ref{exa:chebyshev-ft} follow directly from Proposition \ref{pro:(Z,pi)-prop}. In the same way, the locus of $\Delta$-regularity of Example \ref{exa:curve-u} is described by
Proposition \ref{prop:u-result}. 
\end{rem}

\subsubsection{Irreducibility}
In this section, we prove the following result:

\begin{prop}\label{prop:irreduciblity}
   If $d \geq 2$, then, for all $t \in \CC^\times$, $Z_t$ is irreducible.
\end{prop}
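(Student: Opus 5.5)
The plan is to argue by contradiction on a factorisation of the defining Laurent polynomial, combining Minkowski decompositions of $\Delta$ with the singularity information of Proposition \ref{pro:(Z,pi)-prop}. Fix $t \in \CC^\times$ and a toric model \eqref{eq:toric-model}, so that $Z_t = Z_{f_t} \subset \TT^d$ with $\mathrm{Newt}(f_t)=\Delta$.

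\emph{Reduction to irreducibility of $f_t$.} By Proposition \ref{pro:(Z,pi)-prop}, the singular locus of $Z_t$ is finite, so $Z_t$ is reduced. Hence it suffices to show that $f_t$ is irreducible in $L$ up to units. Suppose instead that $f_t = g h$ with $g,h$ non-monomial. Then
\[
\Delta = \mathrm{Newt}(g)+\mathrm{Newt}(h),
\]
and both summands have positive dimension.

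\emph{Polytope step.} By Remark \ref{rem:mj-polytope}, $\Delta$ is simplicial, so all of its $2$-faces are triangles. A polytope with only triangular $2$-faces is Minkowski indecomposable (Shephard's criterion). Therefore
\[
\mathrm{Newt}(g)=a\Delta+v, \qquad \mathrm{Newt}(h)=b\Delta+w, \qquad a,b>0,\ a+b=1.
\]
Since both summands are lattice polytopes, every edge vector of $\Delta$ is divisible by an integer $c \geq 2$. Two cases arise.
\begin{itemize}
\item If $\Delta$ has $d+2$ vertices, then all the $m_j$ are vertices. By Remark \ref{rem:kernel}, the differences $m_j-m_k$ generate $\ZZ^d$, which is impossible if every edge vector is divisible by $c\geq 2$. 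This gives the contradiction directly.
\item If $\Delta$ is a simplex, one $m_k$ is interior and the vertex differences need not generate $\ZZ^d$. This case needs the geometric argument below.
\end{itemize}

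\emph{Geometric step (main obstacle).} Here I work in the closure $\bar Z_{f_t}\subset \PP_\Delta$. Since $\Delta$ is a simplex, $\PP_\Delta$ is a fake weighted projective space, whose class group has rank one. The closures $\bar Z_g,\bar Z_h$ are effective nonzero Weil divisors, hence $\QQ$-ample, so $\bar Z_g\cap\bar Z_h$ is nonempty of dimension $\geq d-2\geq 0$.

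I claim this intersection avoids the boundary.
\begin{itemize}
\item It misses the torus-fixed points, because $f_t$ has nonzero coefficients at all vertices.
\item At a point of a positive-dimensional proper stratum $\TT_F$, the restriction $f_{t|F}=g_{|F'}h_{|F''}$ would vanish to order two. This contradicts the smoothness of $Z_{f_{t|F}}$ given by quasi-$\Delta$-regularity (Proposition \ref{pro:(Z,pi)-prop}(3)).
\end{itemize}
So the intersection lies in $\TT^d$, where it is contained in $\mathrm{Sing}(Z_t)$.
\begin{itemize}
\item For $t\neq 1$, $\mathrm{Sing}(Z_t)$ is empty, which is a contradiction.
\item For $t=1$ and $d\geq 3$, the intersection has positive dimension, whereas $\mathrm{Sing}(Z_1)$ is a single point, again a contradiction.
\item For $t=1$ and $d=2$, the two curves meet only at the ODP, transversally, so their intersection number is $1$. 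Bernstein's theorem applies since there are no intersections at infinity, and it computes this number as the mixed area $MV(a\Delta,b\Delta)=2ab\,\mathrm{Vol}(\Delta)$. This equals $\tfrac{c_a c_b}{c^2}\cdot 2\mathrm{Vol}(\Delta)$ for positive integers $c_a+c_b=c$. Since $\Delta$ has the interior lattice point $m_k$, its normalised area is $\geq 3$, and after checking integrality I expect this to force the mixed volume to be $\geq 2$, a contradiction.
\end{itemize}

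The delicate points I expect are the last numerical estimate, and verifying carefully that boundary intersections are excluded when a face of $\Delta$ meets only one of $\mathrm{Newt}(g),\mathrm{Newt}(h)$ nontrivially.
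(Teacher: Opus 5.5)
\textbf{The gap: the quadrilateral case.} Your argument fails when $d=2$ and $\Delta$ is a quadrilateral, i.e.\ when $\gamma\in\ZZ^4$ has two entries of each sign.

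\emph{Simplicial} only says that the \emph{proper} faces of $\Delta$ are simplices. For $d=2$ the unique $2$-face is $\Delta$ itself, so Shephard's criterion is unavailable. A lattice quadrilateral can be a Minkowski sum of non-homothetic lattice polygons, and this does happen here: for $\gamma=(-18,20,-5,3)$ the paper writes $\Delta$ as the sum of a quadrilateral and a triangle. So your first bullet (``$d+2$ vertices, contradiction directly'') is unjustified, and this is exactly the hard case.

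A sanity check confirms that an ingredient is missing. Your proof never uses that no two entries of a length-four $\gamma$ are opposite (this holds because otherwise $Q=1$). Yet for $\gamma=(1,-1,-1,1)$ the toric model $f_t=t-x_1-x_2+x_1x_2$ gives $f_1=(1-x_1)(1-x_2)$, a reducible fibre. Here $\Delta$ is the unit square, the sum of two unit segments.

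The paper treats the quadrilateral case by a completely different route. It constructs an \'etale covering of $Z_t$ by a curve $h_1(y_1)+h_2(y_2)=0$, with $h_1,h_2$ indecomposable binomials of distinct degrees. It then deduces irreducibility from a Fried-type theorem whose proof relies on the classification of finite simple groups.

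\textbf{A possible repair.} Your geometric idea can be pushed through in this case, which would give a more elementary argument. It has to be run with Bernstein's theorem rather than ampleness, since $\PP_\Delta$ now has Picard rank two.
\begin{itemize}
\item Put $P=\mathrm{Newt}(g)$ and $Q=\mathrm{Newt}(h)$. For $w\neq 0$ one has $\mathrm{in}_w f_t=\mathrm{in}_w g\cdot \mathrm{in}_w h$, and a product of Laurent polynomials is a monomial only if both factors are.
\item Hence a common zero of $\mathrm{in}_w g$ and $\mathrm{in}_w h$ in $\TT^2$ forces $w$ to be an inner normal of an edge $F$ with $f_{t|F}$ singular, contradicting quasi-$\Delta$-regularity. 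This also settles your worry about faces meeting only one summand: there the initial form is a monomial.
\item Bernstein's theorem then says that $Z_g\cap Z_h$ consists of exactly $\mathrm{MV}(P,Q)\geq 1$ points counted with multiplicity, and all of them are singular points of $Z_t$.
\item This is impossible for $t\neq 1$. For $t=1$ it forces $\mathrm{MV}(P,Q)=1$, because the two branches through the ODP are transverse.
\item By Minkowski's inequality and a lattice-width computation, mixed area one means one of two things. Either $P$ and $Q$ are translates of one unimodular triangle, so $\Delta$ is a triangle. Or one of them is a primitive segment and the other has lattice width one in the transverse direction.
\item In the second case $\Delta$ lies on two adjacent parallel lattice lines, with two vertices on each. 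Let $\ell$ be the primitive linear form constant on these lines. The relations $\sum_j\gamma_j=0=\sum_j\gamma_j\,\ell(m_j)$ then split $\gamma$ into two opposite pairs, which is exactly the excluded case.
\end{itemize}

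\textbf{The simplex case.} Here your geometric step is superfluous. If every edge vector is divisible by $c\geq 2$, then all $m_j-v_0$ lie in $c\ZZ^d+\ZZ(m_k-v_0)$. This sublattice has index at least $c^{d-1}\geq 2$ when $d\geq 2$, which contradicts Remark~\ref{rem:kernel} directly. This is essentially how the paper concludes when $d>2$ or $\Delta$ is a triangle.
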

Note that requiring that $d\geq 2$ is equivalent to requiring that $Z_t$ has dimension $\kappa=d-1 \geq 1$. Moreover, the statement is clearly false when $d=1$. For instance, if $\ga=(-2,1,1)$, then for any $t \in \CC^\times$, $Z_t$ is given by two points (counted with multiplicity).

Proposition \ref{prop:irreduciblity} is crucial in the proof of Theorem \ref{thm:curves-even}. We will only use it when the varieties $Z_t$ are curves; nevertheless, for completeness, we prove it in full generality.

We view $Z_t \subset \TT^d$ as the zero locus of the Laurent polynomial $f_t=\sum_{j=1}^l \gamma_j t^{k_j}x^{m_j}$. Recall that the Newton polytope $\Delta \subset \RR^d$ of $f_t$ is $d$-dimensional and simplicial (see Remark \ref{rem:mj-polytope}). In particular, when $d=2$, $\Delta$ is either a triangle or a quadrilateral.
\smallskip

We first prove the proposition when $d>2$ or $\Delta$ is a triangle. 

\begin{proof}[Proof of Proposition \ref{prop:irreduciblity} for $d>2$ or $\Delta$ a triangle.]
Let $t \in \CC^\times$. We may assume that $f_t$ is a polynomial (see Remark \ref{rem:A-matrix}).
Suppose, for contradiction, that $f_t= gh$ for two polynomials $g,h$ which are not units. 
Then, by \cite{Ostro}, $\Delta$ is the Minkowski sum of the Newton polytopes of $g$, $h$:
\begin{equation}
\label{eq:sum}
\Delta= \operatorname{Newt}(g) +\operatorname{Newt}(h) 
\end{equation} 
We claim that this is impossible.
Since $\Delta$ is simplicial, all its two-dimensional faces are triangles. By \cite[(13)]{Decomp-convex}, if a decomposition of $\Delta$ as in \eqref{eq:sum} exists, then $\operatorname{Newt}(g) = r\operatorname{Newt}(h)$ for some integer $r>0$.
Thus, $\Delta = kP$ for some integer $k>1$ and some lattice polytope $P$. Then, for $j=1, \dots, l$, the vectors $m_j$ satisfy $m_j = k n_j$ for some $n_j \in \Z^{d}$. This contradicts the fact that $m_j-m_1$, $j=2,\dots,l$, span $\Z^{d}$ (see Remark \ref{rem:kernel}).
\end{proof}

It remains to prove the proposition when $\Delta$ is a quadrilateral. This happens if and only if $\gamma\in \ZZ^4$ has two negative entries and two positive entries (see Remark \ref{rem:mj-polytope}).
In this situation, the argument above does not work since $\Delta$ may be a (non-trivial) Minkowski sum:

\begin{example}
Consider the gamma vector $\ga= (-18, 20, -5, 3)$. We may choose the $m_j$ as the columns of the matrix
\[
M= \begin{bmatrix}
1 & 1 & 4 & 6 \\
0 & 1 & 4 & 0 \\
\end{bmatrix}.
\]
Then, $\Delta$ is the Minkowski sum of two  polytopes defined as the convex hull of the columns of two matrices 
\[
\begin{bmatrix}
1 & 1 & 2 & 3 \\
0 & 1 & 2 & 0
\end{bmatrix} \qquad
\begin{bmatrix}
0 & 2 & 3 \\
0 & 2 & 0
\end{bmatrix}
\]
\end{example}
\smallskip

\begin{proof}[Proof of Proposition \ref{prop:irreduciblity} for $\Delta$ a quadrilateral]
Let $t \in \CC^\times$. For $j=1,2,3,4$, set $u_j=\gamma_j t^{k_j}$. By Lemma \ref{lem:Z-is-covered} and Lemma \ref{lem:W-is-irreducible}, $Z_t=\mathcal{Z}_u$ admits an étale covering $W_t \to Z_t$, where $W_t \coloneqq \mathcal{W}_u \subset \TT^2$ is an irreducible hypersurface. It follows that $Z_t$ is irreducible.
\end{proof}

\begin{lem}\label{lem:Z-is-covered}
For $u=(u_1, u_2, u_3,u_4)\in (\CC^\times)^4$ fixed, let $\mathcal{Z}_u=(\sum_{j=1}^4 u_j x^{m_j}=0) \subset \TT^2$. 
There is an étale covering $\mathcal{W}_u \to \mathcal{Z}_{u}$, where $\mathcal{W}_u \subset \TT^2$ is the zero locus of a polynomial $h$ of the form \[h(y_1,y_2)=h_1(y_1)+h_2(y_2)
\]with $y_1,y_2$ coordinates on $\TT^2$ and
\[
h_1(y_1)= v_1 y_1^{a_1}+v_2y_1^{a_2} \qquad h_2(y_2)=v_3y_2^{a_3}+v_4y_2^{a_4} 
\]
such that $\gcd(a_1,a_2)=\gcd(a_3,a_4)=1$ and $\deg(h_1) \neq \deg(h_2)$.
\end{lem}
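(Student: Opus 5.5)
The plan is to realise $\mathcal{Z}_u$ as the image of a hypersurface under an isogeny of tori, and to choose the isogeny so that the four exponents land on the two coordinate axes.

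\emph{Choice of pairing.} Since $\gamma$ has two negative and two positive entries, $\Delta$ is a quadrilateral with vertices $m_1,\dots,m_4$ (Remark \ref{rem:mj-polytope}). Relabel so that $m_1,m_3$ and $m_2,m_4$ are the two diagonals. The diagonals of a convex quadrilateral meet at a single point $p$ in the interior of both. Since the $m_j$ are integral, $p\in\QQ^2$. Let $e,f\in\ZZ^2$ be primitive vectors along the two diagonals; they are linearly independent. Write $m_j-p=s_j e$ for $j=1,3$ and $m_j-p=s_j f$ for $j=2,4$, with $s_j\in\QQ$. Because $p$ lies strictly inside each diagonal, $s_1s_3<0$ and $s_2s_4<0$. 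The relation $\sum_j\gamma_j m_j=0$ together with $\sum_j\gamma_j=0$ gives $\sum_j\gamma_j(m_j-p)=0$. Independence of $e,f$ then forces $\gamma_1s_1+\gamma_3s_3=0$ and $\gamma_2s_2+\gamma_4s_4=0$.

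\emph{Construction of the cover.} Let $\Lambda\subset\QQ^2$ be the lattice generated by the $m_j-p$. It contains every difference $m_j-m_k$, hence contains $\ZZ^2$ by Remark \ref{rem:kernel}, and it has finite index over $\ZZ^2$. Its shape is explicit:
\[
\Lambda=(\ZZ s_1+\ZZ s_3)e\oplus(\ZZ s_2+\ZZ s_4)f=g_1\ZZ e\oplus g_2\ZZ f
\]
for suitable $g_1,g_2\in\QQ_{>0}$. Take $y_1,y_2$ to be the characters of the torus $\TT_\Lambda=\Spec\CC[\Lambda]$ dual to the basis $g_1e,\,g_2f$. The inclusion $\ZZ^2\subset\Lambda$ induces a finite surjective homomorphism of tori $\TT_\Lambda\to\TT^2$, which is étale in characteristic zero.

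Pull $\mathcal{F}_u$ back along this map and multiply by the character $-p\in\Lambda$, which is a unit. The result is
\[
h=u_1y_1^{a_1}+u_3y_1^{a_3}+u_2y_2^{a_2}+u_4y_2^{a_4},\qquad a_j=s_j/g_1\ (j=1,3),\quad a_j=s_j/g_2\ (j=2,4).
\]
Multiplying by a unit does not change the zero locus. Hence $\mathcal{W}_u:=(h=0)$ is the preimage of $\mathcal{Z}_u$ under an étale map, and so $\mathcal{W}_u\to\mathcal{Z}_u$ is an étale covering. By the definition of $g_1,g_2$, we have $\gcd(a_1,a_3)=\gcd(a_2,a_4)=1$. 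Moreover, the relations $\gamma_1s_1+\gamma_3s_3=0$ and $\gamma_2s_2+\gamma_4s_4=0$ pin down the exponents:
\[
(a_1,a_3)=\pm(\gamma_3,-\gamma_1)/\gcd(\gamma_1,\gamma_3),\qquad (a_2,a_4)=\pm(\gamma_4,-\gamma_2)/\gcd(\gamma_2,\gamma_4).
\]
In each pair the two exponents have opposite signs. After rescaling the coefficients (which only replaces $u_j$ by suitable constants $v_j$), $h$ has the shape required in the statement of Lemma \ref{lem:Z-is-covered}.

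\emph{Main obstacle.} The step I expect to be hardest is the condition $\deg(h_1)\neq\deg(h_2)$. I read "degree" as the width $|a_1-a_3|$ of the exponent range, equivalently the degree after clearing the negative power. Then the condition becomes
\[
\frac{|\gamma_1|+|\gamma_3|}{\gcd(\gamma_1,\gamma_3)}\neq\frac{|\gamma_2|+|\gamma_4|}{\gcd(\gamma_2,\gamma_4)}.
\]
Each diagonal joins a positive entry to a negative entry, so the information available is that $\gamma_1+\gamma_3=-(\gamma_2+\gamma_4)$ and that $\gamma$ is prime. I would first try to extract the inequality from these two facts, also using the freedom to swap the names $y_1,y_2$. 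If equality can genuinely occur, I would instead pair the vertices along a pair of opposite edges rather than the diagonals, provided those edges are not parallel, and rerun the same construction. If that also fails, the fallback is to weaken the degree condition to whatever the proof of Lemma \ref{lem:W-is-irreducible} actually uses.
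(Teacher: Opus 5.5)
There is a genuine gap. Your isogeny construction of the cover is sound, and it is a self-contained substitute for the covering result the paper cites. But you apply it to the wrong pairing, and the step that carries the real content of the lemma is left open.

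The claim that each diagonal joins a positive entry to a negative one is false. Your own relations $s_1s_3<0$ and $\gamma_1s_1+\gamma_3s_3=0$ force $\gamma_1$ and $\gamma_3$ to have the \emph{same} sign; the unique affine relation among four points in convex position separates the two diagonals. Hence the two exponents in each pair have opposite signs, and your $h$ is only a Laurent polynomial in separated variables. Clearing denominators destroys the shape $h_1(y_1)+h_2(y_2)$. The way the lemma is used afterwards genuinely needs polynomials: Lemma \ref{lem:indecomp} is for $x^m+x^n$ with $m>n>0$, and the irreducibility theorem is for $f(x)-g(y)$ with $f,g\in\CC[x]$. Even under your width reading the construction fails. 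The two same-sign pairs have the same absolute sum $\sum_{\gamma_i>0}\gamma_i$, so the widths agree exactly when the gcd of the two negative entries equals the gcd of the two positive ones. Since $\gamma$ is prime, this happens exactly when both gcds are $1$, which is the typical case. For the running example $\gamma=(-5,-2,3,4)$, the diagonals are $\{m_1,m_2\}$ and $\{m_3,m_4\}$, and both widths equal $7$.

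Your second fallback, pairing along two opposite edges, is the paper's route. Each edge joins a negative to a positive vertex. Opposite edges are never parallel: a parallel pair would give an affine relation with two opposite coefficients, forcing $\gamma_i+\gamma_j=0$ for some $i,j$. The intersection point of the two edge lines lies outside both edges. So within each pair the exponents have the same sign and, after inverting a coordinate, are the coprime positive integers $(|\gamma_i|,|\gamma_j|)/\gcd(\gamma_i,\gamma_j)$.

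What you do not supply, and what is not automatic, is a proof that one of the two edge pairings has $\deg h_1\neq\deg h_2$. The paper normalises to $|\gamma_1|<\gamma_3\le\gamma_4<|\gamma_2|$ and takes the pairing $\{1,4\},\{2,3\}$, with degrees $A=\gamma_4/d_{14}$ and $B=-\gamma_2/d_{23}$; for $(-5,-2,3,4)$ this gives degrees $2$ and $5$. If $A=B=n$, it solves for $\gamma=\frac1c\bigl(-a(n-b),-n(n-a),b(n-a),n(n-b)\bigr)$ with $a<b$. It then shows that the other pairing $\{1,3\},\{2,4\}$ has degrees $b(n-a)/(de)$ and $(n-a)/d$, where $d=\gcd(n-a,n-b)$ and $e=\gcd(a,b)$. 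These differ because $e\le a<b$. Without this case analysis, or another argument that some edge pairing separates the degrees, the proposal does not prove the lemma.
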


\begin{proof}
Since $\gamma$ is a gamma vector of length $4$, it must be that $\gamma_i+\gamma_j \neq 0$ for all $i,j$.  
Moreover, up to reordering the entries of $\gamma$ and replacing $\gamma$ by $-\gamma$, we can assume that  $|\gamma_1| <\gamma_3 \le \gamma_4 < |\gamma_2|$.

For each $i,j$, let $d_{ij}= \gcd(\gamma_i,\gamma_j)$.
Set 
\begin{align*}
a= -\ga_1 / d_{14} \quad    A = \ga_4/d_{14} \quad   b= \ga_3/ d_{23} \quad 
     B = -\ga_2/ d_{23} 
\end{align*}
Then, $a,A,b,B$ are positive, $\gcd(a,A)= \gcd(b,B)=1$ and $a<A$, $b<B$.

Note that 
$
d_{14}(-a+A)=d_{23}(B-b)$. Since $\gamma$ is prime, this implies in particular that $d_{14}|(B-b)$ and $d_{23}|(-a+A)$. Therefore, 
$
c \coloneqq (-a+A)/d_{23}=(B-b)/d_{14} \in \NN
$. 

Now consider the matrix
\[
\begin{bmatrix}
    A & 0 &0 &a \\
    0 & b & B &0
\end{bmatrix}
\]
Its rows, together with the vector $(1,1,1,1)$, span a finite index sublattice of the kernel of $\gamma \colon \ZZ^4 \to \ZZ$. Then, by \cite[Proposition 3.6]{asem-hgms-toric}, 
the hypersurface of $\TT^2$ given by the zero locus of the polynomial
\begin{equation}
\label{eq:candidate}
u_1  y_1^{A}+ u_2 y_2^{b}  +u_3 y_2^{B} +u_4 y_1^{a}
\end{equation}
where $y_1, y_2$ are coordinates on $\TT^2$,
is an étale covering of $\mathcal{Z}_{u}$. 
If $A\neq B$, 
we denote by $h$ the polynomial in \eqref{eq:candidate}, and the statement is proved.

If $A=B$, we write $n \coloneqq A=B$. Then we have that 
\[
\gamma=\frac{1}{c}\left(-a(n-b), -n(-a+n), b(-a+n), n(n-b)\right)
\]
Since $|\gamma_1| < \gamma_3$, we have that $a(n-b)< b(n-a)$. In particular, $a < b$.  Moreover, since $\gcd(a,n)=\gcd(b,n)=1$, we have that
\[
\gcd(a(n-b), b(n-a))= \gcd(n-a,n-b) \gcd(a,b) 
\] We write $d\coloneqq\gcd(n-a,n-b), e\coloneqq \gcd(a,b)$.

Consider the matrix 
\[
\begin{bmatrix}
    \frac{b(n-a)}{de} & 0 &\frac{a(n-b)}{de} &0 \\
    0 & \frac{n-b}{d}& 0 & \frac{n-a}{d}
\end{bmatrix}
\]
Its rows, together with $(1,1,1,1)$, span a finite index sublattice of the kernel of $\gamma$. Again, by \cite[Proposition 3.6]{asem-hgms-toric}, 
the hypersurface of $\TT^2$ given by the zero locus of the polynomial
\begin{equation}
\label{eq:candidateprime}
u_1  y_1^{\frac{b(n-a)}{de}}+ u_2 y_2^{\frac{n-b}{d}}  +u_3 y_1^{\frac{a(n-b)}{de}} +u_4 y_2^{\frac{n-a}{d}}
\end{equation}
where $y_1, y_2$ are coordinates on $\TT^2$,
is an étale covering of $\mathcal{Z}_{u}$. Note that, by construction,  
\[\gcd\left(\frac{b(n-a)}{de}, \frac{a(n-b)}{de}\right)=1 \quad \text{and} \quad  \gcd\left(\frac{n-b}{d}, \frac{n-a}{d}\right)=1\]
Moreover, $\frac{b(n-a)}{de} \neq \frac{n-a}{d}$ as otherwise $b=e=\gcd(a,b)$ which contradicts the fact that $a<b$.
Then, denoting by $h$ the polynomial in \eqref{eq:candidateprime}, the statement is proved. 
\end{proof}

\begin{example}
We give two examples to illustrate the last proof. 
Consider the gamma vector $\ga=(-6,-1,2,5)$. The curve $\mathcal{Z}_u$ given by
\[
u_1 +u_2 x_1^{5}x_2^2 +u_3 x_2 + u_4 x_1=0\]
admits an étale covering by the curve $\mathcal{W}_u$ given by
\[
u_1 y_2+u_2 y_1^5+u_3 y_2^3 +u_4 y_1=0
\]
Explicitly, the covering map is $2$-to-$1$, given by 
    $
    x_1 \mapsto y_1/y_2, x_2 \mapsto y_2^2
    $. 

Now consider the gamma vector $\ga=(-6,-1,3,4)$. The curve $\mathcal{Z}_u$ given by
    \[
    u_1x_2^2 +u_2x_1^{3} +u_3x_1 + u_4x_2^3=0
    \]
admits an étale covering by the curve $\mathcal{W}_u$ given by
    \[
   u_1 y_2 +u_2 y_1^4 +u_3 y_2^2+u_4 y_1=0
    \]
The covering map is an isomorphism, given by
    $
    x_1 \mapsto y_1^2/y_2, x_2 \mapsto y_1/y_2
    $.
\end{example}

\begin{lem}\label{lem:W-is-irreducible}
    The hypersurface $\mathcal{W}_u$ in Lemma \ref{lem:Z-is-covered} is irreducible.
\end{lem}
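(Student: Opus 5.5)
The plan is to treat $h$ as a Laurent polynomial on $\TT^2$ with Newton polygon $\Delta_h=\operatorname{conv}\{(a_1,0),(a_2,0),(0,a_3),(0,a_4)\}$, and to use the toric closure of $\mathcal{W}_u$ together with a Minkowski-sum analysis in the spirit of \cite{Ostro}.

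\textbf{Step 1: boundary behaviour.} The first step is to check that $\mathcal{W}_u$ is quasi-$\Delta_h$-regular for every $u$. Every edge of $\Delta_h$ carries only two of the four monomials, so every edge polynomial is a binomial. The zero locus of a binomial in $\TT^2$ is a disjoint union of translated subtori, hence smooth. Concretely, the two axis edges have edge polynomials $h_1$ and $h_2$. Since $\gcd(a_1,a_2)=1$, $h_1/y_1^{\min(a_1,a_2)}$ has $|a_1-a_2|$ distinct simple roots, and the same holds for $h_2$. Also, $\mathcal{W}_u\to\mathcal{Z}_u$ is étale and, by Proposition \ref{prop:u-result}, $\mathcal{Z}_u$ is smooth except for at most one ODP. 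Hence $\mathcal{W}_u$ is smooth except for finitely many ODPs, namely the preimages of that ODP.

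\textbf{Step 2: the generic case.} Suppose $\mathcal{W}_u$ is smooth, i.e.\ $\Delta_h$-regular. Let $\bar{\mathcal W}_u\subset\PP_{\Delta_h}$ be its closure. By Remark \ref{rem:Pdelta-Zf-properties} and Step 1, $\bar{\mathcal W}_u$ misses the torus-fixed points and is smooth. It is the zero locus of a section of an ample line bundle on the projective surface $\PP_{\Delta_h}$, so it is connected (Hartshorne III.7.9 applied to the ample divisor). A smooth connected curve is irreducible, and so is its dense open subset $\mathcal{W}_u$.

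\textbf{Step 3: the singular case, where the real content lies.} Now suppose $h=gk$ with $g,k$ non-units. Then $\Delta_h=\operatorname{Newt}(g)+\operatorname{Newt}(k)$ with both summands positive-dimensional, up to monomial factors. There are two sources of obstruction to such a factorization.
\begin{enumerate}
\item Two distinct components of a curve that is smooth at the boundary cannot meet at the boundary, because the axis edge polynomials are squarefree. So all intersection points of $(g=0)$ and $(k=0)$ lie in $\TT^2$ and are singular points of $\mathcal{W}_u$. By Bernstein--Kushnirenko, their number is the mixed area $\mathrm{MV}(\operatorname{Newt} g,\operatorname{Newt} k)$, which is nonzero unless both summands are parallel segments.
\item The edge decomposition must be compatible. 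The axis edges have primitive direction $(1,0)$ or $(0,1)$ and lattice lengths $|a_1-a_2|$ and $|a_3-a_4|$. The two slanted edges have lattice lengths $\gcd(a_2,a_4)$ and $\gcd(a_1,a_3)$ (for the appropriate pairing), and each summand must be a lattice polygon whose edges are sub-edges of these.
\end{enumerate}

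Using $\gcd(a_1,a_2)=\gcd(a_3,a_4)=1$ and $\deg h_1\neq\deg h_2$, I would show that every nontrivial Minkowski decomposition of $\Delta_h$ forces large mixed area. I would first enumerate the decompositions. A summand is either a segment in one of the four edge directions or a quadrilateral or triangle with parallel edges. The condition $\deg h_1\ne\deg h_2$ should exclude $\Delta_h$ being a dilate or a symmetric parallelogram-like shape. The gcd conditions should exclude homothetic splittings $\Delta_h=kP$, exactly as in the proof for $d>2$. I would then compare the resulting mixed area with the number of ODPs of $\mathcal{W}_u$, which is at most the degree of the étale cover, computed as a lattice index from Lemma \ref{lem:Z-is-covered}, and derive a contradiction. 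If this counting is too weak, the fallback is a direct substitution argument. Write $h_1(y_1)=-h_2(y_2)$ as a fibre product of $y_1\mapsto -h_1(y_1)$ and $y_2\mapsto h_2(y_2)$ over $\CC$. Their branch data are controlled by $\gcd(a_1,a_2)=\gcd(a_3,a_4)=1$ and the unequal degrees, and a Fried-type criterion then shows the fibre product is irreducible.

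I expect Step 3, the case analysis of Minkowski decompositions against the singularity count, to be the main obstacle.
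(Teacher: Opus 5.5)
Your Steps 1 and 2 are fine, but they only treat the case where $\mathcal W_u$ is smooth, which is not where the difficulty lies. The whole content of the lemma is in Step 3, and there you only describe what you would try to do. Nothing is actually established there:
you never bound the number of singular points of $\mathcal W_u$, and you never compute a mixed area. The sentence ``$\deg h_1\neq\deg h_2$ and the gcd conditions should exclude bad decompositions'' is not an argument. Non-homothetic lattice decompositions of $\Delta_h$ really do occur under the hypotheses. For example, for $h=y_1+y_1^4+y_2+y_2^2$ one has $\Delta_h=\operatorname{conv}\{0,(2,0),(0,1)\}+\operatorname{conv}\{(1,0),(2,0),(0,1)\}$. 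So an Ostrowski-type argument alone cannot work, and a quantitative comparison is unavoidable.

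Your fallback is also only a gesture. The hypothesis the relevant theorem actually needs is that $h_1$ and $h_2$ are \emph{indecomposable} polynomials, and you neither state nor prove this. That is exactly the paper's proof. Lemma \ref{lem:indecomp} shows that $x^m+x^n$ with $\gcd(m,n)=1$ is indecomposable; this is where the gcd hypotheses enter. Then \cite[Theorem 1.1]{consequences-of-classific}, which rests on the classification of finite simple groups, gives irreducibility of $h_1(y_1)+h_2(y_2)$ for indecomposable $h_1,h_2$ of different degrees. No smooth/singular case split is needed.

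Your counting route can in fact be completed, and it would give an argument avoiding the classification, but two computations are missing. Order the exponents so that $a_1<a_2$ and $a_3<a_4$.

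First, bound the singular points. Since $\gcd(a_1,a_2-a_1)=1$, the $a_2-a_1$ critical points of $h_1$ in $\CC^\times$ have distinct critical values, forming a coset $c_1\mu_{a_2-a_1}$ of roots of unity; likewise for $h_2$. The singular points of $\mathcal W_u$ are the pairs of critical points with $h_1+h_2=0$. So there are at most $\gcd(a_2-a_1,a_4-a_3)$ of them, and all are ODPs, since the Hessian is $\mathrm{diag}(h_1'',h_2'')$.

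Second, compute the mixed areas. No two edges of $\Delta_h$ are parallel; this uses $a_2\neq a_4$ and the gcd conditions. Hence every lattice decomposition is, up to translation, $\Delta_h=P_{s,r}+P_{1-s,1-r}$, where $P_{s,r}=\operatorname{conv}\{(ra_1,0),(sa_2,0),(0,sa_4),(0,ra_3)\}$ with $s\gcd(a_2,a_4)\in\ZZ$ and $r\gcd(a_1,a_3)\in\ZZ$. Its mixed area is $\mathrm{MV}(P_{s,r},P_{1-s,1-r})=s(1-s)a_2a_4-r(1-r)a_1a_3$.

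Assume $a_1a_4>a_2a_3$ by symmetry, and split into cases according to whether the horizontal edge length $sa_2-ra_1$ equals $0$, equals $a_2-a_1$, or lies strictly between. A short check in each case shows that this mixed area exceeds $\gcd(a_2-a_1,a_4-a_3)$ whenever $(s,r)\neq(0,0),(1,1)$. That contradicts your Bernstein--Kushnirenko bound. As written, however, this decisive step of your proof is missing.
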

To prove Lemma \ref{lem:W-is-irreducible}, we need to introduce some notation. 
A polynomial $f(x) \in \CC[x]$ is called \emph{indecomposable} if whenever $f(x) = g(h(x))$ for $g,h\in \CC[x]$, then $h(x)= ax+b$ or $g(x) =ax+b$ for some $a,b \in \CC$.
\begin{proof}
Lemma \ref{lem:indecomp} below implies that the polynomials $h_1,h_2$ in Lemma  \ref{lem:Z-is-covered} are indecomposable. Furthermore, the degrees of $h_1$ and $h_2$ are different and nonzero.
Thus, by \cite[Theorem 1.1]{consequences-of-classific}, the polynomial $h=h_1+h_2$ is irreducible.\footnote{ 
We note that Theorem 1.1 in \cite{consequences-of-classific} is unconditional since the classification of finite simple groups is now a theorem.}
\end{proof}

\begin{lem}\label{lem:indecomp}
Let $m>n >0$ be positive integers with $\gcd(m,n)=1$. The polynomial 
$
x^{m}+x^{n}
$
is indecomposable. 
\end{lem}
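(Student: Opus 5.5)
Suppose $x^m+x^n = g(h(x))$ with $r=\deg g\ge 2$ and $s=\deg h\ge 2$, so that $rs=m$. We will derive a contradiction by differentiating. Replacing $h$ by $h-h(0)$ and $g$ by $g(\cdot+h(0))$, we may assume $h(0)=0$. We may also take $h$ monic, absorbing the leading coefficient into $g$. Differentiating gives
\[
mx^{m-1}+nx^{n-1} = x^{n-1}\bigl(mx^{m-n}+n\bigr) = g'(h(x))\,h'(x).
\]
The factor $mx^{m-n}+n$ has $m-n$ distinct nonzero roots, namely the solutions of $x^{m-n}=-n/m$. These roots are permuted by multiplication by $(m-n)$-th roots of unity.

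The plan is to analyse how the critical points of $f=x^m+x^n$ are distributed between $h'$ and $g'\circ h$. For every root $\beta$ of $g'$, the whole fibre $h^{-1}(\beta)$ consists of critical points of $f$. Hence each root $\zeta$ of $mx^{m-n}+n$ with $g'(h(\zeta))=0$ forces all of $h^{-1}(h(\zeta))$ to lie among the critical points $\{0\}\cup\{\text{roots of } mx^{m-n}+n\}$. Moreover, $f$ must be constant on each such fibre. For a nonzero critical point $\zeta$ we have $\zeta^{m-n}=-n/m$, so
\[
f(\zeta)=\zeta^n\bigl(\zeta^{m-n}+1\bigr)=\zeta^n\,\frac{m-n}{m}.
\]
The critical values $f(\zeta)$ are therefore distinct for distinct $\zeta^n$. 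Since $\gcd(n,m-n)=\gcd(n,m)=1$, the map $\zeta\mapsto\zeta^n$ permutes the roots of unity of order $m-n$, so the $m-n$ nonzero critical points have pairwise distinct critical values. These values are also nonzero and thus differ from $f(0)=0$.

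It follows that a fibre $h^{-1}(\beta)$ over a root $\beta$ of $g'$, on which $f$ takes the single value $g(\beta)$, contains at most one nonzero critical point, plus possibly the point $0$. Such a fibre has $s\ge 2$ points counted with multiplicity, so the multiplicities must come from $h'$. Counting degrees, $\deg g'=r-1\ge 1$, so some $\beta$ exists, and the contributions of $h^{-1}(\beta)$ to the order of vanishing of $f'$ must add up consistently. I expect this bookkeeping to be the main obstacle.

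Here is the route I would try first. The nonzero roots of $f'$ are simple, so each nonzero point of $h^{-1}(\beta)$ is a simple root of $g'(h)h'$. Such a point is therefore a simple preimage of a simple root $\beta$ of $g'$ with $h'\neq 0$ there. Consequently $h^{-1}(\beta)$ contains at most one simple nonzero point, and all of its remaining multiplicity, at least $s-1\ge 1$, sits at $0$. This forces $\beta=h(0)=0$.

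Different roots $\beta$ of $g'$ have disjoint fibres, so at most one of them equals $0$, and hence $g'$ has a single root. Thus $g'=c(y-\beta)^{r-1}$. Combined with the simplicity just derived, this gives $r=2$ and $g'$ vanishing only at $\beta=0$. In that case $h^{-1}(0)=\{0\}\cup\{\zeta\}$, where $0$ has multiplicity $s-1$ and $\zeta\ne 0$ is simple, so $h=x^{s-1}(x-\zeta)$ and $g=a y^2+c'$. Then $f=a\,x^{2s-2}(x-\zeta)^2+c'$. Matching this with $x^m+x^n$ forces $c'=0$, so $f=x^n(x^{m-n}+1)$ would have a repeated nonzero root $\zeta$. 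This is impossible, because $x^{m-n}+1$ is separable.

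If the simplicity argument turns out to be too fragile, I will fall back on Ritt's first theorem. Any decomposition of $x^m+x^n$ would show up in its monodromy group, which contains an $m$-cycle coming from infinity. The decomposition would also produce a nontrivial block system, and I would use the distinctness of the critical values established above to rule this out.
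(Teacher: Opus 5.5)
Your critical-value strategy is a genuinely different route from the paper's, and it is correct up to the point where you conclude that every root $\beta$ of $g'$ equals $0$. The supporting steps all hold: the fibres of $h$ over roots of $g'$ lie in the critical locus of $f$, and $f$ is constant on them. The nonzero critical values $\zeta^n(m-n)/m$ are pairwise distinct and nonzero because $\gcd(n,m-n)=1$. A nonzero point of such a fibre is a simple root of $f'$, hence an unramified point of $h$ over a simple root of $g'$.

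The gap is the next step, ``combined with the simplicity just derived, this gives $r=2$''. You only established that $\beta$ is a simple root of $g'$ when $h^{-1}(\beta)$ contains a nonzero point, and you never show that $h^{-1}(0)$ does. In fact it cannot. Every point of $h^{-1}(0)$ is a critical point of $f$ at which $f$ takes the value $g(0)=f(0)=0$, whereas every nonzero critical point has a nonzero critical value. So the configuration $h^{-1}(0)=\{0\}\cup\{\zeta\}$ that you go on to refute never occurs. The case that actually remains, $h^{-1}(0)=\{0\}$, i.e.\ $h=x^s$ after your normalisation, is left untreated. In that case $g'$ may vanish to any order at $0$, so nothing forces $r=2$.

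The repair is short. From $g'=c\,y^{r-1}$ you get $g=a y^r+b$ with $a\neq 0$, and $b=g(h(0))=f(0)=0$. Then $x^m+x^n=a\,h^r$ with $r\ge 2$ would have only roots of multiplicity divisible by $r$. This contradicts the fact that the $m-n\ge 1$ roots of $x^{m-n}+1$ are simple roots of $x^m+x^n$. Equivalently, $h=x^s$ makes $x^m+x^n$ a polynomial in $x^s$, so $s\mid\gcd(m,n)$. With this ending your proof is complete, and you do not need the Ritt/monodromy fallback.

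The missing case is precisely the paper's case ``$P$ constant''. For the other case, the paper factors $h=x^kP+h(0)$ and $g=(x-h(0))^dQ$ and reads off $kd=n$. It then uses an auxiliary polynomial $W$, vanishing at $0$ and at the roots of $P$, to force $\deg P=m-n$ and hence $g$ linear. Your observation that $x^m+x^n$ has pairwise distinct critical values replaces that computation. It shows directly that $g$ can have no critical value other than $0$, which makes yours the more conceptual of the two arguments.
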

\begin{proof}
Suppose for contradiction that $x^{m}+x^{n}=g(h(x)) $ for some polynomials $g,h \in \CC[x]$ of degrees strictly greater than $1$.
Write
\begin{align*}
    h(x) &= x^{k} P(x) + h(0) \qquad \text{ with } P(0) \neq 0 \\
    g(x) &= \left(x-h(0) \right)^{d} Q(x)  \quad \text{ with } Q(h(0)) \neq 0 , d \ge 1
\end{align*}
Then,
\[
g(h(x)) = \left(x^{k}P(x)\right)^{d} Q(h(x)) = x^{kd} P(x)^{d} Q(h(x)) 
\]
Since $P(0)^{d}Q(h(0)) \neq 0$, it must be that $x^{kd} = x^{n}$.
Thus,
\begin{equation}    \label{eq:indecomposable}
P(x)^{d} Q(h(x)) = x^{m-n} +1 
\end{equation}
We distinguish two cases.

If $P(x) = c \in \CC$, then $h(x) = c x^{k} + h(0)$ with $k \ge 2$, and 
$
x^{m} + x^{n} = g(h(x)) = F(x^{k})
$
for some $F \in \CC[x]$. Hence $k$ divides both $m$ and $n$, which contradicts the fact that $\gcd(m,n) = 1$.

If $P(x)$ is not constant, then, since the right-hand side of \eqref{eq:indecomposable} has only simple roots, one has that $d = 1$, and thus $k = n$.
Equation \eqref{eq:indecomposable} becomes
\begin{equation}\label{eq:indecomposable-2}
    P(x) Q\left(x^{n} P(x) + h(0)\right) = x^{m-n} + 1
\end{equation}
Let $P(x) = A \prod_{i=1}^r (x - z_i)$. Note that for each $i$, $z_i$ is also a root of $x^{m-n}+1$. Plugging $x = z_i$, for $i=1,\dots,r$, into \eqref{eq:indecomposable-2} gives
\begin{equation}
\label{eq:indecomposable-3}
\begin{split}
   Q(h(0))&= \lim_{x \to z_i } \frac{x^{m-n} + 1}{x - z_i} \cdot \frac{x - z_i}{P(x)} 
   = (m-n) z_i^{m-n-1} \cdot \frac{1}{P'(z_i)} \\ &= - (m-n) z_i^{-1} \cdot \frac{1}{P'(z_i)} = -\frac{m-n}{z_i P'(z_i)}
\end{split}
\end{equation}
Setting $x = 0$ in \eqref{eq:indecomposable-2} yields
\begin{equation}\label{eq:indecomposable-4}
    P(0) Q(h(0)) = 1
\end{equation}
Now consider the polynomial
\[
W(x) = Q(h(0))\, x P'(x) + m - n - (m-n) Q(h(0)) P(x)
\]
From \eqref{eq:indecomposable-3} and \eqref{eq:indecomposable-4}, $W(0) = 0$ and $W(z_i) = 0$ for $i = 1, \dots, r$. Since $W$ has degree $r$, it must be identically zero. 
The coefficient of $x^{r}$ in $W$ is
\[
Q(h(0)) A r -(m-n) Q(h(0)) A= 0
\]
thus $r = m - n$. Hence $P$ has degree $m - n$. Then, by \eqref{eq:indecomposable}, $Q(x)$ must be constant. This implies that $g$ is linear, which is a contradiction.
\end{proof}

\subsection{Hodge numbers}
\label{sec:hodge}
We end Section \ref{sec:set-up} with a discussion of the Hodge numbers $h^{i,j}$ of the pure Hodge structure $\operatorname{gr}_{\kappa}^W PH^{\kappa}_c(Z_t,\QQ)$. 
We first introduce some notation: the Hodge polynomial of a pure Hodge structure $H$ of weight $n$ is 
\[
\delta^\#(T)\coloneqq \sum_{i=0}^{n} h^{i,n-i}(H) \; T^{i+1}
\] where $h^{i,j}(H)$ are the Hodge numbers of $H$. 

Let $\gamma$ be a prime gamma vector. For each positive integer $N$, define the polynomial
\[ 
\delta_N^{\#}(T):= \sum_{\substack{j=1\\ \gcd(j,N)=1}}^{N-1}  T^{\sum_{i=1}^l \left\{\frac{j \gamma_i }{N} \right\}} 
\] 
where ${x}$ denotes the fractional part of the real number $x$. 
Moreover, for each $N \ge 1$, define the integers:
\begin{align*}
    m_{\pm}(N)&:= \#\{ \gamma_i :  \operatorname{sgn}(\gamma_i)= \pm 1 \text{ and }  N\, | \, \gamma_i \}
\end{align*}
In other words, $m_+(N)$ counts the number of positive $\gamma_i$, which are divisible by the positive integer $N,$ and likewise for $m_-(N)$. For simplicity, we will drop $N$ from the notation and only write $m_\pm$.

\begin{thm}[Rodriguez Villegas]
\label{thm:hodge-deligne}
    Let $\gamma$ be a prime gamma vector and let $(Z, \pi)$ be the pair associated to $\gamma$. Let $Z_t$ be a smooth fiber of $\pi$. The Hodge polynomial $\delta^\#$ of the pure Hodge structure $\operatorname{gr}_{\kappa}^W PH^{\kappa}_c(Z_t)$ is given by
    \[ 
   \delta^\#(T) = \sum_{\substack{N\ge1\\ m_+ > m_-}} \frac{T^{m_+}-T^{m_-}}{T-1} \delta_N^{\#}(T)
    \]
\end{thm}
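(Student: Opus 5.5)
The plan is to compute the Hodge numbers of $\operatorname{gr}_\kappa^W PH^\kappa_c(Z_t)$ combinatorially from a toric model \eqref{eq:toric-model}. By Proposition \ref{pro:(Z,pi)-prop}, $Z_t=Z_{f_t}\subset \TT^d$ is $\Delta$-regular for $t\neq 1$, with $\Delta=\operatorname{conv}(m_1,\dots,m_l)$ a simplicial circuit polytope. By Poincaré duality (Remark \ref{rem:compact-support}), $\operatorname{gr}^W_\kappa PH^\kappa_c(Z_t)$ is dual, up to a Tate twist, to $\operatorname{gr}^W_\kappa PH^\kappa(Z_t)$. By the diagram \eqref{eq:Z-Ztilde-diagram}, the latter is identified with $W_{d+1}H^d(\TT^d\setminus Z_{f_t})(1)$ when $\kappa>0$. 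The case $\kappa=0$ needs the small correction noted after \eqref{eq:Z-Ztilde-diagram}, and I will treat it separately by hand.

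By Theorem \ref{thm:Baty-summary}, $W_{d+1}=\rho(\mathcal I^1)$, and its Hodge filtration is induced by $\rho(\mathcal E^{-k})$. I expect, and would prove using Batyrev's result that the Jacobian-ring quotient is graded and the $\mathcal D_i$ are homogeneous of degree one in $x_0$, that $\dim\operatorname{gr}_F^{d+1-k}W_{d+1}$ equals the $k$-th coefficient of the graded quotient of the interior ideal of $S^+_\Delta$. Concretely, this is the \emph{interior $h^*$-type polynomial} of $\Delta$:
\[
\sum_k \dim \operatorname{gr}^k\!\bigl(\mathcal I^1/(\textstyle\sum_i\mathcal D_i S^+_\Delta\cap \mathcal I^1)\bigr)\,T^k
=(1-T)^{d+1}\sum_{(k,m)\in \operatorname{int}C(\Delta)\cap\ZZ^{d+1}} T^k .
\]
If needed, this is corrected by the Danilov--Khovanskii inclusion--exclusion over the faces of $\Delta$, which contribute only lower weights. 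The key step is therefore to evaluate the generating function of interior lattice points of $C(\Delta)$ by height.

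$C(\Delta)$ is the cone over the columns $(1,m_j)$, $j=1,\dots,l$, of the first $l-1$ rows of $A$. These columns satisfy exactly one relation, $\gamma$. So $C(\Delta)$ is the image of $\RR_{\ge0}^l$ under a map with one-dimensional kernel spanned by $\gamma$, and it is triangulated by the simplicial cones $\sigma_s$ obtained by omitting one index $s$ with $\gamma_s$ of a fixed sign. I would use the positive entries if the triangulation by $\gamma_s>0$ is the regular one; otherwise the negative ones.

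A lattice point $v$ of $\sigma_s$ is written as $v=\sum_{j\ne s}\lambda_j(1,m_j)$. Using $\det A=\pm1$ and the relation $\gamma$, the fractional parts $\{\lambda_j\}$ are of the form $\{ j'\gamma_i/N\}$, where $N$ runs over divisors of entries of $\gamma$ and $j'$ is coprime to $N$. The height of the corresponding fundamental-parallelepiped point is $\sum_i\{j'\gamma_i/N\}$, since the $\gamma_i$ sum to zero. Summing the standard formula for interior points of a simplicial cone over the cones $\sigma_s$, with inclusion--exclusion on common faces, the contribution of a fixed $(N,j')$ should be a geometric sum. Here the coordinates with $N\mid\gamma_i$ are integral and free: there are $m_+(N)$ such positive coordinates and $m_-(N)$ negative ones. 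The sum should be $T^{\sum\{j'\gamma_i/N\}}(T^{m_-}+\dots+T^{m_+-1})$ when $m_+>m_-$, and zero otherwise (after the $(1-T)^{d+1}$ normalisation). Summing over $j'$ gives $\delta_N^\#(T)$ times $(T^{m_+}-T^{m_-})/(T-1)$, which is the stated formula. The shift of exponents by one, matching $\delta^\#$, comes from the $T^{i+1}$ convention together with the Tate twists.

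I expect the main obstacle to be this last step. It requires bookkeeping the overlaps of the cones $\sigma_s$ and the boundary faces so that exactly the asymmetric count $m_+>m_-$ survives, and it requires checking that non-interior contributions cancel. I would first verify the formula on the examples $\gamma=(-5,-2,3,4)$ and $(-30,-1,6,10,15)$ against Examples \ref{exa:curve-numbers} and \ref{exa:chebyshev-hodge}. As a cross-check, or as an alternative route, I would compute the $E$-polynomial of $Z_t$ via point counts over $\mathbb F_q$, using Gauss sums and Stickelberger's theorem to read off Hodge levels character by character.
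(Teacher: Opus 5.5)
The paper gives no proof of this statement. It is quoted from Rodriguez Villegas \cite{FRV-duke}, with proofs deferred to a forthcoming paper and to \cite{vadym_thesis}, so your argument has to stand on its own. Your reduction to $W_{d+1}H^d(\TT^d\setminus Z_{f_t})$ via duality and \eqref{eq:Z-Ztilde-diagram} is fine.

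The argument breaks at the step you mark as ``expected'': the displayed identity is false.
\begin{itemize}
\item By Ehrhart reciprocity, its right-hand side is $T^{d+1}h^*_\Delta(T^{-1})$.
\item $\mathcal{I}^1$ is the canonical module of the Cohen--Macaulay ring $S_\Delta$, and $\widetilde f_0,\dots,\widetilde f_d$ form a regular sequence on it. Hence the right-hand side is the Hilbert series of $\mathcal{I}^1/J\mathcal{I}^1$, where $J=(\widetilde f_0,\dots,\widetilde f_d)$.
\item $W_{d+1}$ only sees the image of $\mathcal{I}^1$ in the Jacobian ring, and $(J\cap\mathcal{I}^1)/J\mathcal{I}^1\neq 0$.
\end{itemize}
Concretely, the two sides never agree, for two reasons.
\begin{itemize}
\item $T^{d+1}h^*_\Delta(T^{-1})$ always contains $T^{d+1}$ with coefficient $h^*_0=1$. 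But $W_{d+1}$ has no Hodge type $(0,d+1)$, because $F^0=F^1$.
\item At $T=1$ your side equals $\operatorname{vol}(\Delta)=\sum_{\gamma_i>0}\gamma_i=\sum_N m_+(N)\varphi(N)$. The claimed answer gives $\delta^\#(1)=\sum_N\max(m_+(N)-m_-(N),0)\,\varphi(N)$. The difference $\sum_N\min(m_+(N),m_-(N))\,\varphi(N)$ is at least $1$ from the term $N=1$ alone.
\end{itemize}
For $\gamma=(-5,-2,3,4)$ your right-hand side is $2T+4T^2+T^3$. The actual answer is $2T+2T^2$ (genus $2$, Example \ref{exa:curve-numbers}).

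Hence your ``key step'' cannot succeed either. No bookkeeping of interior lattice points over the cones $\sigma_s$ will make the surplus disappear, because those lattice points are genuinely there. Each class $(N,j)$ is counted $m_+(N)$ times, not $\max(m_+(N)-m_-(N),0)$ times. Their excess $\operatorname{vol}(\gamma)-\rk\HH$ is exactly the number of parameter pairs cancelled in the reducible operator $H^{\mathrm{GKZ}}$ of Remark \ref{rem:H-GKZ}.

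What is missing is the right combinatorial invariant. For a $\Delta$-regular hypersurface, the Hodge polynomial of $\gr^W_\kappa PH^\kappa_c(Z_t)$, normalised as $\delta^\#$, is the \emph{local} $h^*$-polynomial $\ell^*(\Delta;T)$ (Borisov--Mavlyutov, Katz--Stapledon). It is obtained from $T^{d+1}h^*_\Delta(T^{-1})$ by subtracting contributions of all proper faces of $\Delta$, the empty face included. These contributions are weighted by toric $g$-polynomials of intervals in the face lattice.

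This subtraction is not an optional clean-up of ``lower weights''. It is precisely what removes the $\min(m_+,m_-)$ part and produces the factor $(T^{m_+}-T^{m_-})/(T-1)$ together with the condition $m_+>m_-$.

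To keep your triangulation strategy, you would need the $\ell^*$-analogue of the Betke--McMullen decomposition. There, the box polynomials of the simplices of the circuit triangulation are weighted by Stanley's local $h$-polynomials of their links, instead of ordinary $h$-polynomials. (The box polynomials do carry the exponents $\sum_i\{j\gamma_i/N\}$, as you say.) You would then have to compute those local $h$-polynomials for the circuit.

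Your point-count fallback is, as stated, only a heuristic. Frobenius traces alone do not determine Hodge numbers. You would also need a Newton-equals-Hodge argument at primes $p\equiv 1$ modulo the relevant $N$, and a way to isolate the primitive weight-$\kappa$ part.
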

This theorem appears in \cite{FRV-duke}. A proof was communicated to us by Rodriguez Villegas and will be presented in a forthcoming paper by the same author. Another proof using similar ideas can be found
in \cite{vadym_thesis}.
\smallskip

As a consequence of Theorem \ref{thm:hodge-deligne} we obtain:
\begin{corol} Let $\HH$ be a hypergeometric local system associated to $\gamma$. Then:
\label{cor:rank-eq}
    \begin{equation}
    \label{eq:ranks}
    \rk \HH=\rk \gr_\kappa^W PR^\kappa \pi_{U !}  \CC
    \end{equation}
\end{corol}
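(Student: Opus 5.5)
The plan is to compute both sides of \eqref{eq:ranks} explicitly in terms of the integers $m_\pm(N)$ and check that they agree. The left side comes from the cyclotomic factorisation of the family parameter $Q$; the right side comes from evaluating the Hodge polynomial of Theorem \ref{thm:hodge-deligne} at $T=1$.

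\textbf{Left-hand side.} By Section \ref{sec:hgm-ls}, $\rk \HH = n = \deg q_0$. I would factor $Q$ into cyclotomic polynomials using $T^k-1=\prod_{N\mid k}\Phi_N(T)$. This gives
\[
Q(T)=\frac{\prod_{\gamma_i<0}(T^{-\gamma_i}-1)}{\prod_{\gamma_i>0}(T^{\gamma_i}-1)}=\prod_{N\ge 1}\Phi_N(T)^{\,m_-(N)-m_+(N)}.
\]
Irreducibility of $\HH$ means that $q_\infty$ and $q_0$ have no common factor (the condition $\alpha_i-\beta_j\notin\ZZ$). Hence the reduced expression $Q=q_\infty/q_0$ is read off directly from this product, and $q_0=\prod_{N:\,m_+>m_-}\Phi_N^{\,m_+-m_-}$. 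Therefore
\[
\rk\HH=\sum_{N:\,m_+(N)>m_-(N)}\bigl(m_+(N)-m_-(N)\bigr)\,\varphi(N),
\]
where $\varphi$ is Euler's totient function.

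\textbf{Right-hand side.} $\gr_\kappa^W PR^\kappa\pi_{U!}\CC$ is a local system whose stalk at $t\in U$ is the pure Hodge structure $\gr_\kappa^W PH^\kappa_c(Z_t)$. Its rank is therefore $\sum_i h^{i,\kappa-i}=\delta^\#(1)$. In the formula of Theorem \ref{thm:hodge-deligne}, the factor $\frac{T^{m_+}-T^{m_-}}{T-1}$ equals $T^{m_-}(1+T+\dots+T^{m_+-m_--1})$, which evaluates to $m_+-m_-$ at $T=1$. The factor $\delta_N^\#(1)$ counts the residues $j$ modulo $N$ that are coprime to $N$, so it equals $\varphi(N)$. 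The formula should be read with $j$ running over residues modulo $N$, so that for $N=1$ the single residue $j=0$ contributes $T^0$ and $\delta_1^\#=1=\varphi(1)$. This is the reading under which the Hodge polynomial correctly accounts for the factor $\Phi_1=T-1$.

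Comparing the two sums gives \eqref{eq:ranks}. The only step needing care is this bookkeeping: the convention for $N=1$, and the use of irreducibility to guarantee that no cyclotomic factor cancels between $q_\infty$ and $q_0$. Without that cancellation-freeness, $\deg q_0$ could be strictly larger than the sum above, so this is exactly where the irreducibility hypothesis enters.
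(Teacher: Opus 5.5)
Your proposal is correct and follows the paper's proof: both sides are computed as $\sum_{N:\,m_+(N)>m_-(N)}(m_+(N)-m_-(N))\varphi(N)$. The left side comes from the cyclotomic factorisation of $Q$ after cancellation, and the right side from evaluating the Hodge polynomial of Theorem \ref{thm:hodge-deligne} at $T=1$. Your explicit convention $\delta_1^\#=1$ is a small but real improvement: as printed, with $j$ running from $1$ to $N-1$, the definition gives an empty sum for $N=1$, yet the paper's proof silently uses $\delta_1^\#(1)=\varphi(1)=1$, and this term is needed whenever $\gamma$ has more positive than negative entries (e.g.\ $\gamma=(-30,-1,6,10,15)$).
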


\begin{proof}
The right hand side of \eqref{eq:ranks} is given by $\delta^\#(1)$.

Let $\varphi$ denote the Euler totient function and $\Phi_N$ denote the $N$th cyclotomic polynomial. We have
    \begin{align*}
       \delta^\#(1)&= \lim_{t \to 1} \sum_{\substack{N\ge1\\ m_+ > m_-}}  \frac{T^{m_+(N)}-T^{m_-(N)}}{T-1} \delta_N^{\#}(t) \\
        &=\sum_{\substack{N\ge1\\ m_+ > m_-}}  \left( m_{+}(N)-m_-(N) \right) \delta_N^\#(1)\\
        &= \sum_{\substack{N\ge1\\ m_+ > m_-}}  \left( m_{+}(N)-m_-(N) \right) \varphi(N)
    \end{align*}
On the other hand,
\begin{align*}
  Q(T)=  \frac{\prod_{\gamma_i>0} T^{\gamma_i}-1}{\prod_{\gamma_i<0} T^{-\gamma_i}-1}  =  \frac{\prod_{\gamma_i>0}\prod_{N | \gamma_i} \Phi_N(T)}{\prod_{\gamma_i<0}\prod_{N | -\gamma_i} \Phi_N(T)}
    =  \frac{\prod_N \Phi_N(T)^{m_{+}(N)}}{\prod_N \Phi_N(T)^{m_{-}(N)}}
\end{align*}
The rank of $\HH$ is given by the degree of either the numerator or the denominator after cancelling the common factors. The degree of the numerator is exactly
\[\sum_{\substack{N \ge 1\\m_+>m_-}} \left(m_+(N)-m_-(N)\right) \deg (\Phi_N(T)) = \sum_{\substack{N \ge 1\\m_+>m_-}}  \left(m_+(N)-m_-(N)\right) \varphi(N)\]
This proves the claim.
\end{proof}

\section{Gamma vectors with one negative entry} \label{sec:multinomial}
In this section, we give a direct proof of the isomorphism \eqref{eq:H-(Z, pi)} when $\gamma$ is a gamma vector with only one negative entry:
\begin{thm}\label{thm:one-negative-gamma}
Let $\ga$ be a gamma vector with only one negative entry. Let $(Z, \pi)$ be the pair associated to $\gamma$. Then the isomorphism \eqref{eq:H-(Z, pi)} holds.
\end{thm}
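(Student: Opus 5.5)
The plan is to exhibit one explicit period of the family $(Z,\pi)$ that is annihilated by $H(\alpha,\beta)$. This period will come from a relative form of weight $\kappa$. Irreducibility of $\HH$ and Corollary \ref{cor:rank-eq} will then turn the resulting map of local systems into an isomorphism.

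\emph{Choice of form and its period.} Replacing $\gamma$ by $-\gamma$ if needed (this only changes $t\mapsto t^{-1}$ and swaps $\alpha,\beta$ up to the standard symmetry), I will assume $\gamma_1<0$ and $\gamma_2,\dots,\gamma_l>0$. By Remark \ref{rem:mj-polytope}, $\Delta$ is then a simplex with vertices $m_2,\dots,m_l$, and $m_1$ lies in its interior. By Remark \ref{rem:A-matrix}, I can normalise the toric model so that $m_1=0$ and $k_1=0$. Then
\[ f_t=\gamma_1+\sum_{j\ge2}\gamma_j t^{k_j}x^{m_j}. \]
Consider the monomial form $\omega=\frac{1}{f_t}\frac{dx}{x}$, i.e.\ $\omega_{(1,0)}$. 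Since $(1,m_1)=(1,0)$ lies in the interior of $C(\Delta)$, Theorem \ref{thm:Baty-summary} places $\omega$ in $\rho(\mathcal I^1)=W_{d+1}$. Its residue therefore defines a section of $W_\kappa P\mathcal H^\kappa=\gr^W_\kappa P\mathcal H^\kappa$. I will integrate $\omega$ over the torus cycle $|x_i|=\varepsilon_i$, choosing radii so that the terms $\gamma_jt^{k_j}x^{m_j}$ are small compared with $|\gamma_1|$ for $t$ near $0$. Expanding $1/f_t$ as a geometric series, the period is $(2\pi i)^d/\gamma_1$ times the constant term of $\sum_n (-\gamma_1)^{-n}\bigl(\sum_{j\ge2}\gamma_jt^{k_j}x^{m_j}\bigr)^n$.

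\emph{Computing the constant term.} Since $\gamma$ spans the relations among the vectors $(1,m_j)$ and $\sum\gamma_jk_j=1$, the constant term is supported on the exponents $(n_2,\dots,n_l)=N(\gamma_2,\dots,\gamma_l)$. It equals, up to the normalising constant,
\[ \Phi(t)=\sum_{N\ge0}\frac{(|\gamma_1|N)!}{\prod_{j\ge2}(\gamma_jN)!}\,\Gamma^{-N}t^{N}. \]
By the Gauss multiplication formula, the ratio of consecutive coefficients is
\[ \frac{\prod_{i}\prod_{r=1}^{|\gamma_1|}\bigl(N+\tfrac{r}{|\gamma_1|}\bigr)}{\prod_{j\ge2}\prod_{r=1}^{\gamma_j}\bigl(N+\tfrac{r}{\gamma_j}\bigr)}, \]
where the constants cancel exactly because of $\Gamma$. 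After cancelling common factors, the numerator corresponds to the roots of the numerator of $Q$ and the denominator to the roots of its denominator. Hence $\Phi=F(\alpha,\beta\mid t)$ for the reduced parameters: here some $\beta_i=1$ comes from the terms $r=\gamma_j$, and the irreducibility condition in \eqref{eq:gamma-alpha-beta} is what justifies the cancellation. By Remark \ref{rem:hypergeometric-functions}, $H\Phi=0$.

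\emph{Conclusion.} The minimal operator of $\omega$ divides $H$, and $H$ is irreducible because $\alpha_i-\beta_j\notin\ZZ$. Since $\Phi\neq0$, the minimal operator of $\omega$ is $H$ itself. Pairing with $\omega$ then gives a morphism between $(\gr^W_\kappa PR^\kappa\pi_{U!}\CC)^\vee$ (flat sections integrated against $\omega$) and $\HH$ that is nonzero, hence surjective onto the irreducible $\HH$. By Corollary \ref{cor:rank-eq} the ranks agree, so this morphism is an isomorphism. Finally, I need to identify the result with $\HH$ rather than its dual. For this I would use either Poincaré duality between $R\pi_!$ and $R\pi_*$ on the graded piece, or the self-duality of irreducible hypergeometric local systems defined over $\QQ$ via Proposition \ref{pro:levelt}.

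I expect the main obstacle to be the bookkeeping of this last step. One must check that the class of $\omega$ really lands in the primitive weight-$\kappa$ part after taking the residue, using diagram \eqref{eq:Z-Ztilde-diagram}; the case $\kappa=0$ needs a separate check there. One must also make sure that the torus cycle gives a genuine local section for $t$ near $0$ that extends by analytic continuation over $U$. The Gauss-multiplication cancellation is routine but sign- and normalisation-sensitive.
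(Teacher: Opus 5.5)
Your argument is the paper's own: same form, same torus-cycle period, same irreducibility-plus-rank count. The form $\frac{1}{f_t}\frac{dx}{x}$ is, before your normalisation $m_1=0$, $k_1=0$, the paper's $\frac{\gamma_1t^{k_1}x^{m_1}}{f}\frac{dx}{x}$ up to the constant $\gamma_1$. It lies in $W_{d+1}$ because $m_1$ is interior, and its torus-cycle period is a constant multiple of $F(\alpha,\beta\mid t)$.

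\textbf{The gap.} In your last paragraph the divisibility runs the wrong way. From $H\Phi=0$ you cannot deduce that the minimal operator $L_\omega$ divides $H$, because $H$ kills one period of $\omega$, not all of them. What does follow is the reverse. $L_\omega$ kills $\Phi$, and $\mathrm{Ann}(\Phi)=\CC(t)[\tfrac{d}{dt}]\,H$ because $H$ is irreducible and $\Phi\neq 0$. So $H$ is a right factor of $L_\omega$. Equivalently, the monodromy span of $\Phi$, which is all of $\HH$, lies \emph{inside} the local system of periods of $\omega$. For the same reason, ``nonzero, hence surjective onto $\HH$'' presupposes that the period map lands in $\HH$, which is exactly what is not yet known.

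\textbf{The fix.} You need the upper bound, and you must use it before identifying $L_\omega$ with $H$. Since $\omega\in W_{d+1}$, its period local system is a quotient of $\gr^W_{\kappa}PR^\kappa\pi_{U!}\CC$, at least for $\kappa\ge1$; for $\kappa=0$ one passes to the primitive quotient, the case you already flagged. Equivalently, $\mathrm{ord}\,L_\omega\le \rk W_{d+1}P\mathcal{H}^d$ by Lemma \ref{lem:weight-submodule}. That rank equals $\rk\HH$ by Corollary \ref{cor:rank-eq}. A quotient of rank at most $\rk\HH$ that contains $\HH$ must equal $\HH$. Hence $L_\omega=H$, and the surjection $\gr^W_\kappa PR^\kappa\pi_{U!}\CC\to\HH$ between local systems of equal rank is an isomorphism. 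This is precisely how the paper concludes; all the ingredients are already in your proposal, only the order of the argument must change.

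\textbf{Smaller points.}

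\emph{No dualisation is needed.} Via Poincar\'e duality, the cycles you integrate over give sections of $R\pi_{U!}$ itself: periods are solutions, and the solution local system of the Gauss--Manin connection is $R\pi_{U!}$ (Section \ref{sec:local-systems}). So the source of your period map is $\gr^W_\kappa PR^\kappa\pi_{U!}\CC$, not its dual. Your fallback via Proposition \ref{pro:levelt} and the self-duality of $\HH$ over $\QQ$ would also work.

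\emph{The exponent of $\Gamma$.} With $m_1=0$, $k_1=0$ the coefficient of $t^N$ is
\[
\frac{(|\gamma_1|N)!}{\prod_{j\ge2}(\gamma_jN)!}\cdot\frac{\prod_{j\ge2}\gamma_j^{\gamma_jN}}{|\gamma_1|^{|\gamma_1|N}}=\frac{(|\gamma_1|N)!}{\prod_{j\ge2}(\gamma_jN)!}\bigl((-1)^{\gamma_1}\Gamma\bigr)^{N},
\]
not the multinomial times $\Gamma^{-N}$. Only with this exponent do the Gauss-multiplication constants cancel.

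\emph{The cancellation needs no irreducibility argument.} All the parameters $r/|\gamma_1|$ and $r/\gamma_j$ lie in $(0,1]$, so a coincidence modulo $\ZZ$ is an exact equality of linear factors. What survives is the reduced pair $(\alpha,\beta)$, and $\beta$ still contains $1$.
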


We assume that $\ga_1$ is the unique negative entry of $\ga$. 
Let $f = \sum_{j=1}^{l} \gamma_j t^{k_j} x^{m_j}$ define a toric model for $(Z, \pi)$. For a Laurent polynomial $h$, we denote by $c_{\underline{0}}(h)$ the constant term of $h$, i.e., the coefficient of the monomial $1=x^{\underline{0}}$ in $h$.
\smallskip

We first show that a period of $(Z,\pi)$ is annihilated by an irreducible hypergeometric operator whose local system of solution is isomorphic to $\HH$. 

Define the Laurent polynomial
\begin{equation}
g:=(\gamma_1 t^{k_1} x^{m_1})^{-1} \sum_{j=2}^{l} \gamma_j t^{k_j} x^{m_j}
\end{equation}

Let $D^\times$ be a small open punctured disk around $t=0$  and let  $\varepsilon=(\varepsilon_1, \dots, \varepsilon_d)\colon D^\times \to ({\RR_{>0}})^d$  be a continuous function
such that
\[
(l-1) \cdot \max_{j\in \{2, \dots, l\}} \left\{ \frac{\gamma_j}{\gamma_1} |t|^{k_j-k_1} \varepsilon_j^{m_j-m_1} \right\} <1 
\]
over $D^\times$.
Let
\begin{equation}
\delta \coloneqq \{ (x,t) \colon |x_i|=\varepsilon_i(t), \ i=1, \dots, d \} \subset \TT^{d} \times D^\times 
\end{equation} 
Then $|g|<1$ on $\delta$. 

Consider the period of $(Z, \pi)$ defined by
\begin{equation}
    \varphi(t) \coloneqq \frac{1}{(2\pi i)^d} \int_{\delta} \frac{\gamma_1t^{k_1}x^{m_1}}{f} \frac{dx}{x}= \frac{1}{(2\pi i)^d} \int_{\delta} \frac{1}{1+g} \frac{dx}{x}
\end{equation}
where $t \in D^\times$.
By definition of $\delta$, $\varphi$ is well-defined.

Let $\alpha, \beta \in (0,1]^n$ be the unique multisets of rational numbers such that  $\alpha_i - \beta_j \notin \Z$ for all $i, j$ and 
\begin{equation}
\label{eq:gamma-parameters}
\frac{x^{-\ga_1}-1}{\prod_{j=2}^{l}( x^{\gamma_j} -1)} =  \frac{ \prod_{j=1}^{n} (x- e^{2 \pi i \alpha_j} ) }{ \prod_{j=1}^{n} (x- e^{2 \pi i \beta_j} ) }
\end{equation}
Let $H(\alpha, \beta)$ be the corresponding hypergeometric operator. Note $H(\alpha, \beta)$ is irreducible of rank $n$. 
\begin{prop}
The period $\varphi$ is annihilated by $H(\alpha, \beta)$.
\end{prop}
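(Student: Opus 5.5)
Since $|g|<1$ on $\delta$, I would expand $\frac{1}{1+g}=\sum_{k\ge 0}(-1)^k g^k$, which converges uniformly on $\delta$, and integrate term by term. By the residue theorem, $\frac{1}{(2\pi i)^d}\int_\delta x^{m}\frac{dx}{x}$ equals $1$ if $m=\underline{0}$ and $0$ otherwise. Hence
\[
\varphi(t)=\sum_{k\ge0}(-1)^k c_{\underline{0}}(g^k).
\]
To compute $c_{\underline{0}}(g^k)$, I would expand $g^k$ multinomially. A term indexed by $(n_2,\dots,n_l)$ with $\sum n_j=k$ has exponent $\sum_{j\ge2} n_j(m_j-m_1)$. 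By Remark \ref{rem:gamma-relations}, this vanishes exactly when $(-k,n_2,\dots,n_l)$ is an integral multiple of $\gamma$, i.e. when $n_j=s\gamma_j$ and $k=-s\gamma_1$ for some $s\ge0$. Here I use that $\gamma_j>0$ for $j\ge2$ and $\gamma$ is prime, so $s$ is a nonnegative integer.

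For such a term, the $t$-power is $t^{\sum_j s\gamma_j(k_j-k_1)}=t^{s\sum_j\gamma_jk_j}=t^s$, using $\sum_j\gamma_j=0$ and $\sum_j\gamma_jk_j=1$. The coefficient is $\binom{-s\gamma_1}{s\gamma_2,\dots,s\gamma_l}\prod_{j\ge2}(\gamma_j/\gamma_1)^{s\gamma_j}$. Combining this with the sign $(-1)^{-s\gamma_1}$, the constants reduce to $\Gamma^{-s}$ up to a sign that I expect to cancel: the sign is $(-1)^{s\gamma_1}\cdot(-1)^{s\sum_{j\ge2}\gamma_j}=(-1)^{s\cdot 0}=1$, and $\gamma_1^{-s\sum_{j\ge 2}\gamma_j}=\gamma_1^{s\gamma_1}$. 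I therefore expect
\[
\varphi(t)=\sum_{s\ge0}\frac{(-s\gamma_1)!}{\prod_{j\ge2}(s\gamma_j)!}\,\Gamma^{-s}t^s,
\]
which is a unit-normalised multinomial series whose radius of convergence is $1$ in $t$.

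Next, I would verify that this series satisfies $H(\alpha,\beta)\varphi=0$. Write $a_s$ for the coefficient of $t^s$. The ratio $a_{s+1}/a_s$ is a rational function of $s$. By Gauss multiplication, $(Ns+N)!/(Ns)!=\prod_{r=1}^{N}(Ns+r)=N^N\prod_{r=1}^N(s+r/N)$, and the powers $N^N$ combine exactly to $\Gamma$. Hence
\[
\frac{a_{s+1}}{a_s}=\frac{\prod_{r=1}^{-\gamma_1}(s+\frac{r}{-\gamma_1})}{\prod_{j\ge2}\prod_{r=1}^{\gamma_j}(s+\frac{r}{\gamma_j})}.
\]
The multisets $\{r/N\}$ appearing here are the roots (as $e^{2\pi i\cdot}$) of $x^N-1$ taken in $(0,1]$. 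Consequently, $\Theta\varphi=t\,\Lambda\varphi$ for the possibly reducible operator $\Theta=\prod_{j\ge2}\prod_r(\theta+r/\gamma_j-1)$ and $\Lambda=\prod_r(\theta+r/(-\gamma_1))$. This is $H(\tilde\alpha,\tilde\beta)$ with the non-reduced parameter lists of length $-\gamma_1=\sum_{j\ge 2}\gamma_j$.

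The main obstacle is passing from this non-reduced operator to the irreducible $H(\alpha,\beta)$. The common parameters are pairs $\tilde\alpha_i=\tilde\beta_j$ (both in $(0,1]$). I would cancel them by hand at the level of coefficients: a factor $(s+c)$ appearing in both the numerator and the denominator of $a_{s+1}/a_s$ simply cancels. The cancellation is legitimate because $a_s$ is determined by $a_0=1$ and the ratio, so $a_{s+1}/a_s$ equals the reduced ratio $\prod_i(s+\alpha_i)/\prod_i(s+\beta_i)$, with $\beta$ containing $1$ because $r=N$ always contributes $s+1$. Since $\gamma_1$ contributes the factor $s+1$ too, exactly one $1$ must survive in $\beta$. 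The reduced multisets are precisely those defined by \eqref{eq:gamma-parameters}. Hence $\varphi=F(\alpha,\beta\mid t)$ in the sense of Remark \ref{rem:hypergeometric-functions}, which is annihilated by $H(\alpha,\beta)$.

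The only care needed is to check that, after cancellation, the $\alpha$'s and $\beta$'s match \eqref{eq:gamma-parameters} as multisets in $(0,1]$. This follows because the cancellation of identical factors $(x-e^{2\pi i c})$ in the rational function corresponds exactly to cancellation of the factors $(s+c)$.
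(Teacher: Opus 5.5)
Your proposal is correct and follows the same route as the paper: geometric expansion of $1/(1+g)$ on $\delta$, the residue theorem, the constant-term computation of Lemma~\ref{lem:constant-term}, and identification of the resulting multinomial series with $F(\alpha,\beta\mid t)$. The paper leaves that last step as ``a standard calculation''; you carry it out explicitly via the coefficient ratio $a_{s+1}/a_s$ and cancellation of common factors.

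There are two harmless slips. First, the coefficient of $t^s$ is $\binom{-s\gamma_1}{s\gamma_2,\dots,s\gamma_l}\,|\Gamma|^{s}=\binom{-s\gamma_1}{s\gamma_2,\dots,s\gamma_l}(-1)^{\gamma_1 s}\Gamma^{s}$, not a multiple of $\Gamma^{-s}$. Your ratio formula and your radius-of-convergence claim are already consistent with the correct value. Second, after cancellation $l-2\geq 1$ copies of $1$ remain in $\beta$, not exactly one; this is still all that Remark~\ref{rem:hypergeometric-functions} requires.
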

\begin{proof}
Since $|g| <1$ on $\delta$, we can write 
\begin{align*}
  \varphi(t) = \frac{1}{(2\pi i)^d} \int_\delta \sum_{n=0}^\infty (-g)^n \frac{dx}{x}
\end{align*}
By the residue theorem and Lemma \ref{lem:constant-term} below, the last term is equal to 
\[
\sum_{n=0}^\infty c_{\underline{0}}(-g)^n = \sum_{h=0}^\infty (-1)^{ \ga_1 h} \binom{-\ga_1 h} {\ga_2 h,  \dots ,  \ga_l h }(\Gamma t)^{h}
\]
The last term is equal to the hypergeometric function
$F\biggl(\begin{matrix} \alpha_1, \ \ldots, \ \alpha_n \\ \beta_1, \ \ldots, \ \beta_n \end{matrix} \:\bigg|\:\: t \biggr)$ by a standard calculation. The result then follows from Remark \ref{rem:hypergeometric-functions}. 
\end{proof}

\begin{lem}\label{lem:constant-term}
Let $n\ge0$ be an integer. The constant term $c_{\underline{0}}(g^n)$ vanishes unless $n =-\ga_1h$ for some integer $h$. If $n=-\ga_1 h$, then
\[
c_{\underline{0}}(g^n)=\binom{-\ga_1h   } { \ga_2  h,  \dots  ,\ga_l h} (\Gamma t)^{h}
\]
\end{lem}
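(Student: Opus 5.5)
The plan is to expand $g^n$ by the multinomial theorem and identify which terms can contribute to the constant term, using the fact (Remark \ref{rem:gamma-relations}) that $\gamma$ spans the lattice of integral relations among the vectors $(1,m_j)$. By definition,
\[
g=\sum_{j=2}^l \frac{\gamma_j}{\gamma_1}\, t^{k_j-k_1}\, x^{m_j-m_1},
\]
so the multinomial theorem gives
\[
g^n=\sum_{\substack{n_2+\dots+n_l=n\\ n_j\ge 0}} \binom{n}{n_2,\dots,n_l}\prod_{j=2}^l\left(\frac{\gamma_j}{\gamma_1}\right)^{n_j} t^{\sum_{j\ge2} n_j(k_j-k_1)}\, x^{\sum_{j\ge 2} n_j(m_j-m_1)}.
\]
Since the coefficients are polynomials in $t$ rather than numbers, one should note that distinct tuples $(n_2,\dots,n_l)$ may give the same monomial in $x$. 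This causes no difficulty, because we only need the tuples with trivial $x$-exponent, and we will show there is at most one.

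\emph{Which tuples contribute.} Put $n_1:=-n$. The condition $\sum_{j\ge 2} n_j(m_j-m_1)=0$ is equivalent to $\sum_{j=1}^l n_j m_j=0$, because $\sum_{j\ge2}n_j=n$. Together with the identity $\sum_{j=1}^l n_j=0$, this says that $(n_1,\dots,n_l)$ is an integral relation among the vectors $(1,m_j)$. By Remark \ref{rem:gamma-relations}, it follows that $(n_1,\dots,n_l)=h\gamma$ for some $h\in\ZZ$.

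Because $\gamma_j>0$ for $j\ge2$ and $n_j\ge 0$, we must have $h\ge 0$, and then $n=-n_1=-\gamma_1 h$. Hence the constant term vanishes unless $n$ is of the form $-\gamma_1 h$. When it is, the tuple $n_j=h\gamma_j$ is the unique contributing term, since $h$ is determined by $n$. The case $n=0$, i.e.\ $h=0$, gives $1$, which is consistent with the formula.

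\emph{Evaluating the coefficient.} For $n=-\gamma_1 h$, the multinomial coefficient is $\binom{-\gamma_1 h}{\gamma_2 h,\dots,\gamma_l h}$. The power of $t$ is
\[
h\sum_{j\ge2}\gamma_j(k_j-k_1)=h\left(\sum_{j=1}^l\gamma_j k_j - k_1\sum_{j=1}^l\gamma_j\right)=h,
\]
using condition (2) on the last row of $A$ and $\sum_j\gamma_j=0$. For the constants, $\sum_{j\ge2}\gamma_j=-\gamma_1$ gives
\[
\prod_{j\ge2}\gamma_1^{-\gamma_j h}=\gamma_1^{\gamma_1 h},
\]
so the product equals $\prod_{j=1}^l\gamma_j^{\gamma_j h}=\Gamma^h$. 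Combining these yields $c_{\underline{0}}(g^n)=\binom{-\gamma_1 h}{\gamma_2 h,\dots,\gamma_l h}(\Gamma t)^h$.

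I expect no genuine obstacle. The only step needing care is the lattice step: we must use that $\gamma$ generates the full relation lattice, not just a finite-index sublattice, which is exactly what primality of $\gamma$ and Remark \ref{rem:gamma-relations} provide. Beyond that, the remaining care is in the sign and exponent bookkeeping with $\gamma_1<0$.
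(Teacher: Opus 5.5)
Your proposal is correct and follows the paper's own proof. Both arguments expand $g^n$ by the multinomial theorem, set $n_1=-n$, and observe that a contributing exponent vector is an integral relation among the vectors $(1,m_j)$, hence equals $h\gamma$ by Remark \ref{rem:gamma-relations}; they then evaluate the $t$-exponent using $\sum_j\gamma_j k_j=1$. You also make explicit several points the paper leaves implicit: why $h\ge 0$, why the contributing tuple is unique, and how the constant $\Gamma^h$ is computed.
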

\begin{proof}

Write $n_1 :=-n$. We have
\begin{align*}
    g^n &= (\gamma_1 t^{k_1} x^{m_1})^{-n}  \left(\sum_{j=2}^{l} \gamma_j t^{k_j} x^{m_j}\right)^n\\
    &=\sum_{\substack{0\le n_2,\dots,n_l\le n \\ n_2+\cdots+n_l=n}}^n \binom{n}{n_2, n_3,  \dots , n_l} \prod_{j=1}^l  (\gamma_j t^{k_j} x^{m_j})^{n_j}\\
    &=\sum_{\substack{0\le n_2,\dots,n_l\le n \\ n_2+\cdots+n_l=n}}^n \binom{n}{n_2\, n_3\,  \dots \, n_l} \left(\prod_{j=1}^l  \gamma_j^{n_j} \right) 
     t^{\sum_{j=1}^l k_j n_j}  x^{\sum_{j=1}^l n_jm_j}
\end{align*}
To get the constant term, we impose $\sum_{j=1}^l n_jm_j=0$.
Note that $\sum_{j=1}^l n_j= 0$. 
Since $\gamma$ spans the lattice of integral relations among $(1,m_1), \dots ,(1,m_l)$ (see Remark \ref{rem:gamma-relations}), there is a nonnegative integer $h$ such that $n_j =  \ga_j h$, $j=1,\dots, l$. Then
$\sum_{j=1}^l k_j n_j =\sum_{j=1}^l k_j \ga_j h=  h$, where the last equality holds by the definition of $(k_1, \dots, k_l)$.
Substituting into the last equation gives the result.
\end{proof}
\smallskip

Note that, by definition of $\alpha$ and $\beta$, $\HH$ is isomorphic to the local system of solutions of the differential equation $H(\alpha, \beta)\cdot \varphi=0$.

We now prove Theorem \ref{thm:one-negative-gamma}.
\begin{proof}[Proof of Theorem \ref{thm:one-negative-gamma}]
Since $\varphi$ is not zero, the integrand $\omega$ is a non-trivial section of $\mathcal{H}^d(\widetilde{Z}_U/U)$.
Let $\Delta$ be the convex hull of the $m_j$, $j=1, \dots, l$. Since $\gamma_1$ is the only negative entry of $\gamma$, $\Delta$ is a simplex, and $m_1$ lies in its interior (see Remark \ref{rem:mj-polytope}). It follows that $\omega$ defines a section of ${W}_{d+1}P\mathcal{H}^d(\widetilde{Z}_U/U)$ (see Theorem \ref{thm:Baty-summary} and Section \ref{subsec:relative-coho}).

Since $H(\alpha, \beta)$  is irreducible, it generates the annihilator $\mathrm{Ann}(\varphi)$ of $\varphi$. Thus $\HH$ is isomorphic to the local system of solutions of $\mathrm{Ann} (\varphi)\cdot g=0$. This is a sub-local system of the local system of solutions of $\mathrm{Ann}(\omega) \cdot g=0$, where $\mathrm{Ann}(\omega)$ denotes the annihilator of $\omega$. Now, since $\omega$ is a section of $\mathcal{W}_{d+1}P\mathcal{H}^d(\widetilde{Z}_U/U)$, this last local system is a quotient of $\gr^W_{d-1}PR^d \widetilde{\pi}_{U !} \CC \simeq \gr^W_{\kappa}PR^{\kappa} {\pi}_{U !} \CC$. By Corollary \ref{cor:rank-eq}, $\HH$ and  $\gr^W_{\kappa}PR^{\kappa} {\pi}_{U !} \CC$ have the same rank, thus they are isomorphic. 
\end{proof}

\begin{rem}
If $\gamma$ has only one positive entry, let $\HH^\prime$ and $(Z', \pi')$ be the hypergeometric local system and the pair associated to $\gamma^\prime=-\gamma$. Since $\gamma'$ has only one negative entry, by Theorem \ref{thm:one-negative-gamma}, the isomorphism \eqref{eq:H-(Z, pi)} holds for $\HH'$ and $(Z', \pi')$. This implies that  \eqref{eq:H-(Z, pi)} also holds for $\HH$ and  $(Z, \pi)$. 
Indeed, letting $\iota \colon t \mapsto 1/t$ be the inversion map, we have, on the one hand, $\HH\simeq \iota^\star \HH^\prime$, on the other hand, 
\[
\gr^W_\kappa R^\kappa \pi_{U \ !} \CC \simeq \iota^\star \gr^W_\kappa R^\kappa \pi'_{U \ !} \CC  
\]
In other words, Theorem \ref{thm:one-negative-gamma} establishes  \eqref{eq:H-(Z, pi)} for all gamma vectors with only one negative or only one positive entry. 
\end{rem}

\begin{rem}
When $\gamma$ has more than one negative entry,  constructing a relative cycle producing a period of $(Z, \pi)$ that is not identically zero seems very hard. 
Note also that, for a vector $\gamma$ whose number of negative entries is greater or equal than that of positive entries, the hypergeoemtric function $F(\alpha, \beta \mid t)$, where $\alpha, \beta \in ((0,1]\cap \QQ)^n$ are the unique multisets satisfying \eqref{eq:gamma-alpha-beta}, is not annihilated by the operator $H(\alpha, \beta)$.
This suggests that $F(\alpha, \beta)$ is not a period of $(Z, \pi)$. 
\end{rem}

\section{Proof of Theorem \ref{thm:non-trivial-isomorphism}} 
\label{sec:proof-thm-1}
Pick a toric model for $(Z,\pi)$ as in Equation \eqref{eq:toric-model}.   
Write 
$
f(x,t)\coloneqq \sum_{j=1}^l \gamma_j t^{k_j} x^{m_j}$ and let $\Delta$ be the convex hull of the $m_j$ as in Section \ref{sec:pair}. 

To prove Theorem \ref{thm:non-trivial-isomorphism}, we first prove a series of preliminary results about the sheaf $\mathcal{H}^d(\widetilde{Z}_U/U)$, where $\widetilde{Z}_U$ is the zero locus of 
$\widetilde{f}=x_0f-1$ in $\TT^{d+1} \times U$, which may be of independent interest.

\begin{rem}
\label{rem:primitive}
Recall that, if $d>1$, the sheaves $W_{d+1} 
\mathcal{H}^d(\widetilde{Z}_U/U)$ and $W_{d+1} 
P\mathcal{H}^d(\widetilde{Z}_U/U)$ are the same, while, if $d=1$, the latter sheaf is the quotient of the first one by the rank-2 constant sheaf $\mathcal{H}^1(\TT^{2} \times U/U)$, see Equation \eqref{eq:Z-Ztilde-diagram}. When $d=1$, all the statements of Sections  \ref{sec:1} and  \ref{sec:2} hold with the same hypotheses and replacing $W_{2}
\mathcal{H}^1(\widetilde{Z}_U/U)$ with $W_{2} 
P\mathcal{H}^1(\widetilde{Z}_U/U)$ in the conclusions. 
\end{rem}

\subsection{Monomial forms and hypergeometric operators.} 
\label{sec:1} 
Pick a nonzero monomial form, i.e., a nonzero element
\begin{equation} \label{eq:mono-form-step1}
\omega= \omega_{(\beta_0, \beta)}=\frac{x^\beta}{f^{\beta_0}} \frac{dx}{x} 
\end{equation}
 of $\mathcal{H}^d(\widetilde{Z}_U/U)$, where 
$\beta\coloneqq (\beta_1, \dots, \beta_d) \in \ZZ^d$, $\beta_0 \in \ZZ_{\geq 1}$, and $(\beta_0, \beta_1, \dots, \beta_d)$ is a lattice point of the cone $C(\Delta)$. 
As in Proposition \ref{prop:u-result}, let $u_1, \dots , u_l$ be coordinates on $(\Gm)^l$, let $u \coloneqq (u_1, \dots, u_l)$, and write $\mathcal{F}\coloneqq \sum_{j=1}^l u_j x^{m_j}$.
The form $\omega$ is the specialisation at $u_j=\gamma_jt^{k_j}$, $i=j, \dots, l$, of the form 
\begin{equation}
\label{eq:mono-form-u}
\Omega= \frac{x^\beta}{\mathcal{F}^{\beta_0}} \frac{dx}{x}   
\end{equation}

By Theorem \ref{theo:form-GKZ}, the periods of $\Omega$ are annihilated by the GKZ hypergeometric differential system of operators:
\begin{equation}\label{eq:form-GKZ-1}
\left\{ 
\begin{array}{l}
\begin{aligned}
& \sum_{j=1}^l \; \theta_j   +\beta_0 \\
& \sum_{j=1}^l m_{ij} \theta_j + \beta_i \quad \text{for} \ i=1, \dots, d\\
& \prod_{\gamma_j>0} \partial_j^{\gamma_j} - \prod_{\gamma_j<0}  \partial_j^{-\gamma_j}  \; 
\end{aligned}
\end{array} \right.
\end{equation}
where we have used that the lattice of integral relations $R$ of Theorem \ref{theo:form-GKZ} is generated by $\gamma$.

The solutions of \eqref{eq:form-GKZ-1} are related to generalised hypergeometric equations as follows:
\begin{prop}\label{prop:reduc-of-GKZ}
Let $g=g(u)$ be an analytic solution to \eqref{eq:form-GKZ-1}. Let  $z\coloneqq u^\gamma$ and  let $\theta \coloneqq  z \frac{d}{dz}$.  Then $g$ has the form $g(u) = u^\eta h_\eta(z),$ where $\eta=(\eta_1, \dots, \eta_l)$
satisfies 
\begin{equation}\label{eq:eta-system}
\left\{ 
\begin{array}{l}
\begin{aligned}
\sum_{j=1}^l  \eta_j  &= -\beta_0\\
\sum_{j=1}^l m_{ij} \eta_j &= -\beta_i \ \text{ for } \  i=1,\dots,d
\end{aligned}
\end{array} \right.
\end{equation}
and $h_\eta(z)$ is annihilated by the hypergeometric differential operator 
\begin{equation}
\label{eq:H-red}
H^{\mathrm{GKZ}}_\eta \coloneqq \Gamma \prod_{\gamma_i >0}  \prod_{j=0}^{\gamma_i-1}\left( \theta + \frac{\eta_i- j}{\gamma_i} \right)- z  \prod_{\gamma_i <0}  \prod_{j=0}^{-\gamma_i-1} \left( \theta+ \frac{\eta_i - j}{\gamma_i} \right)  
\end{equation}
where $\Gamma = \prod_{i=1}^l \gamma_i ^{\gamma_i}$. 
\end{prop}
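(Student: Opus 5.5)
The plan is to use the $d+1$ first-order (Euler/homogeneity) equations of \eqref{eq:form-GKZ-1} to reduce $g$ to a function of the single invariant $z=u^\gamma$. Then we translate the box operator $\prod_{\gamma_j>0}\partial_j^{\gamma_j}-\prod_{\gamma_j<0}\partial_j^{-\gamma_j}$ into an ordinary differential operator in $z$.

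\textbf{Reduction to one variable.} Since $\gamma$ spans the lattice of relations among the columns $(1,m_j)$ (Remark \ref{rem:gamma-relations}), the $(d+1)\times l$ matrix with columns $(1,m_j)$ has rank $d+1=l-1$ and its kernel is $\QQ\gamma$. Hence the system \eqref{eq:eta-system} has a one-dimensional affine space of solutions $\eta$, and we fix one. Write $g(u)=u^\eta h(u)$ locally (for a choice of branch of $u^\eta$). Since $\theta_j(u^\eta h)=u^\eta(\theta_j+\eta_j)h$, the first $d+1$ equations become $\sum_j\theta_j h=0$ and $\sum_j m_{ij}\theta_j h=0$. Thus $h$ is invariant under the $(d+1)$-dimensional torus acting through the rows of $(1,m_j)$, whose orbits on $(\CC^\times)^l$ are the level sets of $u^\gamma$, since the invariant characters are exactly the multiples of $\gamma$. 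Concretely, we change coordinates on $(\CC^\times)^l$ using a unimodular completion (as with the matrix $A$ of Section \ref{sec:toric-models}) so that $z=u^\gamma$ is one coordinate and the others are dual to the rows of $(1,m_j)$. In these coordinates the $d+1$ Euler operators become the partial derivatives in the other coordinates, so $h=h_\eta(z)$ locally.

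\textbf{Transforming the box operator.} Use $\partial_j=u_j^{-1}\theta_j$, and the identity $\partial_j^{k}=u_j^{-k}\prod_{s=0}^{k-1}(\theta_j-s)$ for $k\ge0$. For a function of the form $u^\eta h(z)$ we have $\theta_j(u^\eta h(z))=u^\eta(\eta_j+\gamma_j\theta)h(z)$. Therefore
\[
\partial_j^{k}\bigl(u^\eta h\bigr)=u^{\eta}u_j^{-k}\prod_{s=0}^{k-1}\bigl(\gamma_j\theta+\eta_j-s\bigr)h .
\]
Note that multiplying by $u_j^{-k}$ and then applying $\theta_{j'}$ for $j'\neq j$ is harmless. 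When several $\partial_j$ are composed one tracks that $u_j^{-k}$ shifts only $\eta_j$; since the $\partial_j$ for distinct $j$ commute and act on disjoint factors, the product over $j$ is still fine.

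Applying this gives
\[
\prod_{\gamma_j>0}\partial_j^{\gamma_j}(u^\eta h)=u^\eta u^{-\gamma_+}\prod_{\gamma_j>0}\prod_{s=0}^{\gamma_j-1}(\gamma_j\theta+\eta_j-s)h,
\]
and similarly for the negative part with $u^{-\gamma_-}$, where $\gamma_-$ has entries $-\gamma_j$ for $\gamma_j<0$. Equating the two sides and multiplying by $u^{\gamma_+}u^{-\eta}$ turns $u^{\gamma_+-\gamma_-}$ into $u^\gamma=z$. Pulling out the factors $\gamma_j$ yields
\[
\prod_{\gamma_j>0}\gamma_j^{\gamma_j}\prod_{s}\Bigl(\theta+\tfrac{\eta_j-s}{\gamma_j}\Bigr)h
= z\prod_{\gamma_j<0}\gamma_j^{-\gamma_j}\prod_{s}\Bigl(\theta+\tfrac{\eta_j-s}{\gamma_j}\Bigr)h .
\]
Since $(-\gamma_j)^{\ldots}$ signs must be tracked, the second factor carries $\gamma_j^{-\gamma_j}$ with $\gamma_j$ negative. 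Dividing by $\prod_{\gamma_j<0}\gamma_j^{-\gamma_j}$ gives the constant $\Gamma$ in front of the first product, i.e.\ $H^{\mathrm{GKZ}}_\eta h_\eta=0$.

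\textbf{Expected obstacle.} The main obstacle is bookkeeping rather than concept: checking the sign and normalisation so that exactly $\Gamma=\prod\gamma_i^{\gamma_i}$ appears, and justifying the global form $g=u^\eta h_\eta(z)$ on the domain of analyticity. The latter is a local statement, valid on simply connected opens with a chosen branch of $u^\eta$, and that suffices for the use made of it.
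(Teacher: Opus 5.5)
Your proposal is correct and follows the paper's proof. You reduce $g$ to $u^\eta h_\eta(z)$ using the $d+1$ Euler equations, then rewrite the box operator via $u_j^k\partial_j^k=\prod_{s=0}^{k-1}(\theta_j-s)$, $\theta_j u^\eta=u^\eta(\theta_j+\eta_j)$ and $\theta_j=\gamma_j\theta$ on functions of $z$, and pull out the factors $\gamma_j$ to produce $\Gamma$. Two things you add are worth keeping: your coordinate change via the unimodular matrix $A$, which makes $z=u^\gamma$ a coordinate, is a cleaner justification of the one-variable reduction than the paper's ``computation with power series''; and your explicit multiplication by $u^{\gamma_+}u^{-\eta}$ corrects the paper's first display, which silently drops this factor.
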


\begin{proof}  
A computation with power series shows that
an analytic solution $g$ to the first $d+1$ operators in \eqref{eq:form-GKZ-1} is of the form $
g(u) = u^\eta h_\eta(z)$, where $\eta$ and $z$ are as in the statement of the lemma.

Since $u \in (\Gm)^l$, the last operator in \eqref{eq:form-GKZ-1}, applied to $g$, can be rewritten as 
\begin{align*}   
\left[ \prod_{\ga_i >0} \partial_i^{\gamma_i}- \prod_{\gamma_i<0 }  \partial_i^{-\gamma_i}\right] g(u) &=\left[ \prod_{\ga_i >0} u^{\gamma_i} \partial_i^{\gamma_i}- z\prod_{\gamma_i<0 }  u^{-\gamma_i} \partial_i^{-\gamma_i}\right] g(u)  \\
&= \left[  \prod_{\ga_i >0}  \prod_{j=0}^{\gamma_i-1}( \theta_i - j )- z\prod_{\ga_i <0}  \prod_{j=0}^{-\gamma_i-1}( \theta_i - j ) \right] g(u)\\
&= \left[  \prod_{\ga_i >0}  \prod_{j=0}^{\gamma_i-1}( \theta_i - j )- z\prod_{\ga_i <0}  \prod_{j=0}^{-\gamma_i-1}( \theta_i - j ) \right]u^\eta h_\eta(z) \\
&= u^\eta \left[  \prod_{\ga_i >0}  \prod_{j=0}^{\gamma_i-1}( \theta_i +\eta_i- j )- z\prod_{\ga_i <0}  \prod_{j=0}^{-\gamma_i-1}( \theta_i+\eta_i - j ) \right] h_\eta(z)
\end{align*}
Above we have used that $\theta_i$ and $\theta_j$ commute with one another, and the following relations:
\begin{align*}
    u_i^k \partial_i^k &= \theta_i(\theta_i-1)\cdots (\theta_i-k+1) \quad & i=1,\dots,l\\
    \theta_i u^{\eta}&= u^{\eta} (\theta_i+\eta_i) \quad & i=1,\dots,l
\end{align*}

The action of $\theta_i$ on $z$ is given by $\theta_iz=\gamma_i \theta z$. 
Replacing $\theta_i=\gamma_i \theta$, one obtains
\begin{align*}
& u^\eta \left[ \prod_{\ga_i >0}  \prod_{j=0}^{\gamma_i-1}( \gamma_i \theta +\eta_i- j )- z\prod_{\ga_i <0}  \prod_{j=0}^{-\gamma_i-1}( \gamma_i \theta+\eta_i - j) \right] h_\eta(z)\\
& =  u^{\eta} \left[  \prod_{\gamma_i>0}{\gamma_i}^{\gamma_i} \prod_{\ga_i >0}  \prod_{j=0}^{\gamma_i-1}\left( \theta + \frac{\eta_i- j}{\gamma_i} \right)- z \prod_{\gamma_i<0} (\gamma_i) ^{-\gamma_i}  \prod_{\ga_i <0}  \prod_{j=1}^{-\gamma_i} \left( \theta+ \frac{\eta_i - j}{\gamma_i} \right) \right] h_\eta(z)
\end{align*}
It follows that $h_\eta$ is annihilated by $H^{\mathrm{GKZ}}_\eta$.
\end{proof}

\begin{rem}
\label{rem:eta}
The presentation $g(u)=u^\eta h_\eta(z)$, as well as the operator $H^{\mathrm{GKZ}}_\eta$,  depends on the choice of $\eta$. 

Since $\beta/\beta_0$ lies in the polytope $\Delta$, it is the convex linear combination $\beta/\beta_0=\sum_{j=1}^l \mu_j m_j$. 
A solution to \eqref{eq:eta-system} is then given by $\eta_j=-\beta_0 \mu_j$, $j=1, \dots, l$. 
Moreover, if $\overline{\eta}$ is a solution to \eqref{eq:eta-system}, then $\eta=c \cdot \gamma +\overline{\eta}$, with $c \in \CC$.
Finally, there always exists an integer solution to \eqref{eq:eta-system} 
by construction of the vectors $m_j$. 
\end{rem}
 
 \begin{rem}
\label{rem:H-GKZ}
 The operator ${H}^{\mathrm{GKZ}}_\eta$ in \eqref{eq:H-red} is a hypergeometric operator of order equal to $\mathrm{vol}(\gamma)=\sum_{\gamma_i>0} \gamma_i$,  with parameters $\alpha_\eta, \beta_\eta$ given by
 \begin{equation}
     \label{eq:alpha-beta-eta}
 \alpha_\eta \coloneqq \left(\frac{\eta_i -j}{\gamma_i}\right)_{
 \substack{ i \colon \gamma_i <0, \\ j=0, \dots, -\gamma_i-1} }
 \quad \text{and} \quad \beta_\eta \coloneqq 
\left(\frac{\eta_i -j}{\gamma_i}+1 \right)_{ 
\substack{ i \colon \gamma_i >0, \\ j=0, \dots, \gamma_i-1  }} 
\end{equation}
Changing $\eta$ to $\eta^\prime=c \gamma +\eta$ shifts the hypergeometric parameters of the corresponding operator by $c$.
 
The operator ${H}^{\mathrm{GKZ}}_\eta$ is reducible. 
By cancelling pairs $({\alpha_\eta}_i,{\beta_\eta}_j)$ that differ by an integer, one obtains, modulo integers, the parameters of the irreducible hypergeometric operator $H_\gamma$ associated to $\gamma$. 
\end{rem}

Since the matrix ${A}$ in \eqref{eq:matrix-gamma}  has determinant $\pm1$, 
there exists a unique $\bar{\eta} \in \Z^d$ which satisfies the following equations: 
\begin{equation}\label{eq:unique-eta}
\left\{ 
\begin{array}{l}
\begin{aligned}
\sum_{j=1}^l  \bar{\eta}_j  &= -\beta_0\\
\sum_{j=1}^l m_{ij} \bar{\eta}_j &= -\beta_i \ \text{ for } \  i=1,\dots,d\\
\sum_{j=1}^l k_j \bar{\eta}_j &= 0
\end{aligned}
\end{array} \right.
\end{equation}
Write $H^{\text{GKZ}}$ for the hypergeometric differential operator obtained from $H^{\text{GKZ}}_{\bar{\eta}}$ by replacing $z=\Gamma t$.

\begin{lem}
The periods of $\omega$ are annihilated by $H^{GKZ}$.
\end{lem}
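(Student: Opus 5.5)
The plan is to pull back the GKZ statement along the specialisation map $\sigma\colon U \to (\CC^\times)^l$, $t\mapsto u(t)=(\gamma_1 t^{k_1},\dots,\gamma_l t^{k_l})$, and use the particular normalisation $\bar\eta$ of \eqref{eq:unique-eta} to kill the prefactor $u^{\bar\eta}$.

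\emph{Step 1: extend periods to GKZ solutions.} Let $\varphi(t)=\int_{\delta_t}\omega$ be a period of $\omega$ over a locally constant family of cycles $\delta_t$ on $\TT^d\setminus Z_t$, for $t$ in a small disc in $U$. By Proposition \ref{prop:u-result}, the family $\mathcal{Z}$ is $\Delta$-regular, hence locally topologically trivial, away from the discriminant $\prod_j u_j^{\gamma_j}=\Gamma$. Therefore the cycle $\delta_t$ extends locally constantly to a family $\delta_u$ for $u$ in a neighbourhood $B$ of $\sigma(t)$. The function $g(u)=\int_{\delta_u}\Omega$ is analytic on $B$, satisfies $g\circ\sigma=\varphi$, and is annihilated by \eqref{eq:form-GKZ-1} by Theorem \ref{theo:form-GKZ}.

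\emph{Step 2: apply Proposition \ref{prop:reduc-of-GKZ} with $\eta=\bar\eta$.} Since $\bar\eta$ solves \eqref{eq:eta-system}, the first $d+1$ equations give $g(u)=u^{\bar\eta}h(z)$ with $z=u^\gamma$. The last equation then gives $H^{\mathrm{GKZ}}_{\bar\eta}h=0$. The step I expect to need the most care is justifying that the presentation can be taken with this particular $\eta$. By Remark \ref{rem:eta}, $\eta$ is determined only up to adding $c\gamma$. However, $u^{\bar\eta}$ and $u^{\bar\eta+c\gamma}=u^{\bar\eta}z^{c}$ differ by a function of $z$ alone. So if $g=u^{\eta}h_\eta(z)$, then $g=u^{\bar\eta}\,(z^{c}h_\eta(z))$. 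Moreover, the proof of Proposition \ref{prop:reduc-of-GKZ} is valid for any solution $\eta$ of \eqref{eq:eta-system}, so the resulting $h=z^ch_\eta$ is annihilated by $H^{\mathrm{GKZ}}_{\bar\eta}$. This holds locally on simply connected $B$, choosing a branch of $z^c$.

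\emph{Step 3: restrict to the curve $\sigma(U)$.} Along $\sigma$,
\[
u^{\bar\eta}=\prod_j \gamma_j^{\bar\eta_j}\,t^{\sum_j k_j\bar\eta_j}=\prod_j\gamma_j^{\bar\eta_j},
\]
which is a nonzero constant by the last equation of \eqref{eq:unique-eta}. Likewise,
\[
z=\prod_j(\gamma_jt^{k_j})^{\gamma_j}=\Gamma\,t^{\sum_j k_j\gamma_j}=\Gamma t,
\]
using $\sum_j\gamma_jk_j=1$. Hence $\varphi(t)=C\,h(\Gamma t)$ with $C\neq0$. Since $z=\Gamma t$ is a linear rescaling, $z\frac{d}{dz}=t\frac{d}{dt}$, so $H^{\mathrm{GKZ}}$ is exactly $H^{\mathrm{GKZ}}_{\bar\eta}$ with $\theta=t\frac{d}{dt}$ and $z$ replaced by $\Gamma t$. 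Consequently $H^{\mathrm{GKZ}}\varphi=C\,(H^{\mathrm{GKZ}}_{\bar\eta}h)(\Gamma t)=0$.

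\emph{Step 4: conclude at the level of $\omega$.} Since every period of $\omega$ is annihilated by $H^{\mathrm{GKZ}}$, and the periods determine classes in $\mathcal{H}^d(\widetilde Z_U/U)$ by de Rham duality, it follows that $H^{\mathrm{GKZ}}\cdot\omega=0$ under the Gauss--Manin connection, if that stronger form is needed.
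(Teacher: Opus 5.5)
Your proof is correct and follows the same route as the paper. You write the periods of $\Omega$ as $u^{\bar\eta}h(z)$ via Proposition~\ref{prop:reduc-of-GKZ}, then specialise along $u_j=\gamma_j t^{k_j}$: the condition $\sum_j k_j\bar\eta_j=0$ makes $u^{\bar\eta}$ the constant $\prod_j\gamma_j^{\bar\eta_j}$, and $\sum_j k_j\gamma_j=1$ gives $z=\Gamma t$. Your Steps 1 and 2 usefully spell out two points the paper's proof leaves implicit: that periods of $\omega$ are restrictions of periods of $\Omega$, and that one may take $\eta=\bar\eta$ because any other solution differs by $c\gamma$. Step 4 is just the paper's subsequent Corollary~\ref{cor:omega-HGKZ}.
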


\begin{proof}
By Proposition \ref{prop:reduc-of-GKZ}, the periods of $\Omega$ are of the form $u^{\bar{\eta}} h_{\bar{\eta}}(z)$.
Specialization by $u_j=\gamma_j  t^{k_j}$ gives that periods of $\omega$
are of the form
\[
\prod_{j=1}^l \gamma_j^{\bar{\eta}_j } h_{\bar{\eta}}(\Gamma t) 
\] 
(note that it does not depend on  $u$), thus are annihilated by $H^{\mathrm{GKZ}}$. 
\end{proof}

\begin{corol}\label{cor:omega-HGKZ}
The form $\omega$ is annihilated by $H^{\mathrm{GKZ}}$.
\end{corol}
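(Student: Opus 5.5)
The plan is to pass from the statement about periods (the preceding lemma) to a statement about the section $\omega$ of $\mathcal{H}^d(\widetilde{Z}_U/U)$ itself, using the duality between the Gauss--Manin system and its local system of solutions recalled in Section~\ref{sec:local-systems}. Concretely, $H^{\mathrm{GKZ}}$ is a differential operator in $t$ with coefficients regular on $U$. Via the Gauss--Manin connection (the $\mathcal{D}_U$-module structure of Section~\ref{subsec:relative-coho}), it acts on sections, and $H^{\mathrm{GKZ}}\cdot\omega$ is again a section of $\mathcal{H}^d(\widetilde{Z}_U/U)$. We must show this section is zero in relative cohomology.

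First I will check that periods commute with the connection. For any locally constant family of cycles $\delta_t$ (a local section of the homology local system dual to $R^d\widetilde{\pi}_*\CC$), differentiation under the integral sign gives
\[\frac{d}{dt}\int_{\delta_t}\omega=\int_{\delta_t}\nabla_{\frac{d}{dt}}\omega.\]
This holds because the Gauss--Manin connection is defined by \eqref{eq:construction-of-gauss-manin} and is compatible with the analytic one (Deligne). Iterating this and multiplying by the polynomial coefficients in $t$, we get
\[\int_{\delta_t}\bigl(H^{\mathrm{GKZ}}\cdot\omega\bigr)=H^{\mathrm{GKZ}}\Bigl(\int_{\delta_t}\omega\Bigr).\]
The right-hand side vanishes by the preceding lemma, for every local section $\delta_t$.

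Next I will conclude by nondegeneracy of the period pairing. On each stalk, the pairing between $H_d(\widetilde{Z}_t,\CC)$ and $H^d_{dR}(\widetilde{Z}_t)\simeq H^d(\widetilde{Z}_t,\CC)$ is perfect, by de Rham's theorem applied to the smooth affine fibre. A class whose integrals over all cycles vanish is therefore zero. Hence $H^{\mathrm{GKZ}}\cdot\omega=0$ in every fibre over a small disc, and so as a section of $\mathcal{H}^d(\widetilde{Z}_U/U)$ on all of $U$, since the sheaf is locally free and $U$ is connected.

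The main obstacle is making precise that ``periods of $\omega$'' in the preceding lemma means all periods, i.e.\ integrals over every local horizontal cycle, and not just some. Theorem~\ref{theo:form-GKZ} indeed applies to arbitrary periods of $\Omega$ over cycles in the complement of $Z_{\mathcal F}$. The specialisation $u_j=\gamma_jt^{k_j}$ then sends these to all periods of $\omega$: pulling back along the map $U\to(\Gm)^l$, horizontal cycles restrict to horizontal cycles, and every cycle on a fibre over $t$ extends locally over a neighbourhood in $u$-space, because the family over the regular locus in $u$-space is locally trivial (Proposition~\ref{prop:u-result}). A secondary subtlety is the chain rule relating $\frac{d}{dt}$ to the $\partial_j$. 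It is absorbed by the lemma, which has already carried out the change of variables, and for the constant factor $\prod\gamma_j^{\bar\eta_j}$ it is automatic because of the normalisation $\sum k_j\bar\eta_j=0$.
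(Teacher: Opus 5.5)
Your proposal is correct and follows the same route as the paper, whose proof is the single observation that a differential operator annihilating all periods of $\omega$ must annihilate $\omega$. You fill in the details behind that sentence: periods commute with the Gauss--Manin connection, the period pairing on each smooth affine fibre is perfect, and the specialisation $u_j=\gamma_j t^{k_j}$ yields all periods of $\omega$, with the constant factor $\prod_j\gamma_j^{\bar\eta_j}$ coming from $\sum_j k_j\bar\eta_j=0$.
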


\begin{proof}
A differential operator annihilating all periods of $\omega$ must annihilate $\omega$.
\end{proof}

Changing the monomial form $\omega$ changes the operator $H^{\mathrm{GKZ}}$. However, the operators obtained by cancellation of pairs of integers, as in Remark \ref{rem:H-GKZ}, are all isomorphic to each other. 

In what follows, for a given $\omega$ as in \eqref{eq:mono-form-step1}, we make the dependency on $\omega$ explicit and denote $H^{\mathrm{GKZ}}$ by $H^{\mathrm{GKZ}}_\omega$.

\subsection{Monodromy and genuine singularities at $t=1$.}
\label{sec:2}

Recall that a differential operator $L$ on $\Ps^1$ has a \emph{genuine singularity} at  $p \in \PP^1(\CC)$ if the local system of solutions of the differential equation $L g=0$ has non-trivial local monodromy at $p$.
We call \emph{minimal differential operator} of a form in $\mathcal{H}^{d-1}(Z_U/U)$ or in $\mathcal{H}^{d}(\widetilde{Z}_U/U)$ the unique monic generator of the annihilator of the form in $\CC(t)[\frac{d}{dt}]$. 

We first observe the following fact:
\begin{lem}\label{lem:genuine-sing}
If $R^{d-1}\pi_{U \; !} \QQ$ has non-trivial local monodromy at $t=1$, then there exists a monomial form in $\mathcal{H}^{d}(\tilde{Z}_U/U)$ whose minimal differential operator has a genuine singularity at $t=1$. 
\end{lem}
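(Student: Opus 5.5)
We argue by contraposition: assuming that every monomial form in $\mathcal{H}^{d}(\widetilde{Z}_U/U)$ has a minimal differential operator without genuine singularity at $t=1$, we show that the local monodromy of $R^{d-1}\pi_{U\,!}\QQ$ at $t=1$ is trivial.

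The first step is to pass from forms to the whole local system. By Theorem \ref{thm:Baty-summary}(1), the monomial forms $\omega_{(\beta_0,\beta)}$ generate $\mathcal{H}^d(\widetilde{Z}_U/U)$ as an $\mathcal{O}_U$-module. Fix a neighbourhood of $t=1$ and choose finitely many monomial forms $\omega_1,\dots,\omega_N$ spanning the fibre at a nearby base point $t_0$. For each $\omega_k$ consider the map of local systems
\[
\operatorname{Sol}_k\colon R^d\widetilde{\pi}_{U\,!}\CC \to \operatorname{Sol}(L_k),
\]
where $L_k$ is the minimal operator of $\omega_k$. This map sends a (dual) cycle class $\delta$ to the period $t\mapsto\int_\delta\omega_k$. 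It is equivariant for monodromy. The target has trivial local monodromy at $t=1$ by hypothesis, so for the loop $g_1$ each map satisfies $\operatorname{Sol}_k\circ(g_1-\mathrm{id})=0$.

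The second step is to show that the maps $\operatorname{Sol}_k$ are jointly injective. If a class $\delta$ has $\int_\delta\omega_k\equiv 0$ near $t_0$ for all $k$, then $\delta$ pairs to zero with a spanning set of the de Rham fibre. By the perfectness of the period pairing (de Rham comparison, i.e.\ Poincaré duality between $R\widetilde{\pi}_*$ and $R\widetilde{\pi}_!$ as recalled in Section \ref{sec:local-systems}), this forces $\delta=0$. Joint injectivity together with the vanishing of each $\operatorname{Sol}_k\circ(g_1-\mathrm{id})$ gives $g_1=\mathrm{id}$ on $R^d\widetilde{\pi}_{U\,!}\CC$, and hence on the dual local system $R^d\widetilde{\pi}_{U*}\CC$.

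The third step transfers this to $Z$. The residue sequence \eqref{eq:residue-map}, in its relative version from Section \ref{subsec:relative-coho}, gives a monodromy-equivariant surjection
\[
R^d\widetilde{\pi}_{U*}\QQ\to R^{d-1}\pi_{U*}\QQ(-1)
\]
with constant kernel. So $g_1$ acts trivially on $R^{d-1}\pi_{U*}\QQ$. By duality between $R^{d-1}\pi_{U*}$ and $R^{d-1}\pi_{U!}$ (recall $2\kappa-(d-1)=d-1$), it also acts trivially on $R^{d-1}\pi_{U\,!}\QQ$, which contradicts the assumption of the lemma.

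The main obstacle is the second step. We must make sure that the solution local system of the minimal operator really receives the periods, so that each $\operatorname{Sol}_k$ is well defined and monodromy-equivariant even though $L_k$ may have apparent singularities. We must also make sure that using finitely many forms spanning one fibre suffices. We handle the first point by working on a small punctured disc around $t=1$ contained in $U\cup\{1\}$, where the minimal operator's solution space contains all periods of $\omega_k$ by definition of the annihilator. We handle the second point by the fact that periods of a spanning set near $t_0$ determine the cycle. The passage between compactly supported cohomology and ordinary cohomology should be routine via Remark \ref{rem:compact-support}.
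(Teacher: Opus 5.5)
Your proof is correct and is essentially the paper's argument run in contrapositive form. It uses the same ingredients: duality between $R^{d-1}\pi_{U*}\QQ$ and $R^{d-1}\pi_{U\,!}\QQ$, the relative residue map from $\widetilde{Z}$ to $Z$, and generation of $\mathcal{H}^d(\widetilde{Z}_U/U)$ by monomial forms. Your spanning-set and perfect-period-pairing step spells out the paper's one-line claim that ``the form can be chosen to be monomial.''
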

\begin{proof}
The local system $R^{d-1}{\pi_{U}}_!\QQ$ is dual to the local system $R^{d-1}{\pi_{U}}_* \QQ$ of flat sections of $\mathcal{H}^{d-1}(Z_U/U)$. If  $R^{d-1}{\pi_{U}}_!\QQ$ has non-trivial local monodromy at $t=1$, then there exists a section of $\mathcal{H}^{d-1}(Z_U/U)$ whose minimal differential operator has a genuine singularity at $t=1$.
By the relative Poincar\'e residue mapping, there is a section of $\mathcal{H}^{d}({\tilde{Z}_U/U})$ whose minimal differential operator has a genuine singularity at $t=1$. Moreover, since monomial forms generate the sections of $\mathcal{H}^{d}({\tilde{Z}_U/U})$, the form can be chosen to be monomial.
\end{proof}

Now let $\mathcal{W}_i \coloneqq  {W}_{i} \mathcal{H}^d(\widetilde{Z}_U/U)$ be the $i$th weighted piece. We show that, in fact, if $R^{d-1}\pi_{U \; !} \QQ$ has non-trivial local monodromy at $1$, then the minimal differential operator of some monomial form in the weighted piece $\mathcal{W}_{d+1}$ has a genuine singularity at $1$: 

\begin{prop}\label{prop:coh-form-top} If $R^{d-1}\pi_{U \; !} \QQ$ has non-trivial local monodromy at $t=1$,
then there exists a monomial form in $\mathcal{W}_{d+1}$ whose minimal differential operator has a genuine singularity at $t=1$. 
\end{prop}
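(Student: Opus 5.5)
Building on Lemma \ref{lem:genuine-sing}, I would show that the local monodromy at $t=1$ of the whole variation $\mathcal{H}^d(\widetilde{Z}_U/U)$ is detected already on the lowest weight piece $\mathcal{W}_{d+1}$. The geometric input is that $Z_1$ has a single ordinary double point (Proposition \ref{pro:(Z,pi)-prop}) and that the fibrewise compactification $\bar\pi\colon V\to\CC^\times$ of Theorem \ref{thm:compactifying-delta} is smooth near the boundary divisor. Consequently the local monodromy $T$ at $t=1$ is a Picard--Lefschetz transformation
\[
T(x)=x\pm\langle x,\delta\rangle\,\delta ,
\]
where $\delta$ is the vanishing cycle. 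In particular $T-\mathrm{id}$ has rank at most one.

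I would then argue by weights, in four steps.

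\textbf{Step 1.} The boundary $V_t\setminus Z_t$ is a relative normal crossing divisor that extends smoothly across $t=1$. Hence the graded pieces $\gr^W_j$ of $R^{d-1}\pi_{U*}\QQ$ with $j>d-1$ are built out of the cohomology of the boundary strata, and these strata form smooth proper families over all of $\CC^\times$. So these graded pieces extend as local systems across $t=1$ and have trivial local monodromy there.

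\textbf{Step 2.} Suppose $T$ were trivial on $W_{d-1}$, the lowest weight piece, which is pure. Then $T$ acts trivially on every graded piece. Since $T-\mathrm{id}=N$ is a rank-one map of the form $x\mapsto\langle x,\delta\rangle\delta$, the line $\QQ\delta$ lies in $W_{d-1}$, or else $\langle\cdot,\delta\rangle$ kills $W_{d-1}$. The first alternative contradicts the assumption unless $\langle\cdot,\delta\rangle$ vanishes on $W_{d-1}$.

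\textbf{Step 3.} To rule out the remaining cases, I would use that $\delta$ is a class on $Z_t$ supported in a small ball. Its image lies in the image of compactly supported cohomology in ordinary cohomology, which is the pure part $W_{d-1}$ (equivalently $\gr^W_{\kappa}$ of $R^\kappa\pi_!$, via duality). Therefore $N(x)\in\QQ\delta\subset W_{d-1}$ for every $x$.

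\textbf{Step 4.} $N$ respects the polarization on the pure piece. If $N$ vanished on $W_{d-1}$, then $\langle\delta,\delta\rangle=0$ restricted there would force $N$ to factor through $H/W_{d-1}\to W_{d-1}$, compatibly with the weights. Such a morphism of mixed Hodge structures lowers weight by at least $2$, but $N$ is a morphism of type $(-1,-1)$ in the limiting structure. I would try to derive a contradiction from this directly, or more simply note that $N(x)=\langle x,\delta\rangle\delta$ with $\delta\in W_{d-1}$ and $\langle\cdot,\delta\rangle$ nonzero. By Poincaré duality between $W_{d-1}H^{d-1}$ and $\gr^W_{d-1}H^{d-1}_c$, the functional is detected on a pure class, so $N|_{W_{d-1}}\neq 0$.

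From these steps, non-trivial monodromy on $R^{d-1}\pi_{U!}\QQ$ forces non-trivial monodromy on $\gr^W_\kappa$. Dually, the sub-VMHS $\mathcal{W}_{d+1}\cong W_{\kappa}P\mathcal{H}^{d-1}(Z_U/U)(-1)$ (see \eqref{eq:Z-Ztilde-diagram} and Remark \ref{rem:primitive}) has flat sections with non-trivial monodromy. By Theorem \ref{thm:Baty-summary}(3), $\mathcal{W}_{d+1}$ is spanned by monomial forms $\omega_{(\beta_0,\beta)}$ with $(\beta_0,\beta)$ interior to $C(\Delta)$. If every such form had a minimal operator without a genuine singularity at $t=1$, the $\mathcal{D}$-module they generate, which is all of $\mathcal{W}_{d+1}$, would have trivial local monodromy, a contradiction. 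Hence some monomial form in $\mathcal{W}_{d+1}$ has the required property.

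The main obstacle is Step 3: showing rigorously that the vanishing cycle pairs nontrivially with the pure weight piece. This means excluding the case where $\delta$ is orthogonal to $W_{d-1}$, that is, where it is "invisible" in compact support. For this I would rely on the specialization sequence sketched in Section \ref{sec:notes-proof}, together with Step 1.
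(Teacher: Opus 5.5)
Your route differs from the paper's and can be completed, but as written the decisive step is not proved. The problem is that your formula $T(x)=x\pm\langle x,\delta\rangle\delta$ conflates the vanishing class $\delta_c\in H^{d-1}_c(Z_t,\QQ)$ with its image $\iota(\delta_c)$ under $\iota\colon H^{d-1}_c(Z_t,\QQ)\to H^{d-1}(Z_t,\QQ)$.

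The first argument of Step 4 does not work. $N$ is of type $(-1,-1)$ for the limiting MHS, whose weight filtration is the monodromy one. The $W_\bullet$ of Steps 2--4 is the weight filtration of the smooth fibre, for which $N$ is not a morphism at all. The second argument assumes that $\langle\cdot,\delta_c\rangle$ is non-zero on $W_{d-1}$, which is exactly what you list as the obstacle. Also, in Step 3 you only have $\operatorname{im}\iota\subseteq W_{d-1}$, not equality; inclusion is all you need.

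The specialization sequence is not what closes the gap; keeping $H$ and $H_c$ apart does. Tameness at infinity (the relative normal crossing compactification of Theorem \ref{thm:compactifying-delta}) lets you realise the monodromy by a Picard--Lefschetz twist supported in a Milnor ball. This gives $T-\mathrm{id}=V\circ\iota$ on $H^{d-1}_c(Z_t)$ and $T-\mathrm{id}=\iota\circ V$ on $H^{d-1}(Z_t)$, with $V(x)=\pm\langle x,\delta_c\rangle\,\delta_c$.

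If $T\neq\mathrm{id}$ on $H^{d-1}_c(Z_t)$, choose $y$ with $\langle\iota y,\delta_c\rangle\neq0$. Since $\langle\iota y,\delta_c\rangle=\pm\langle\iota\delta_c,y\rangle$ (both equal $\int y\cup\delta_c$ up to sign), you get $\iota(\delta_c)\neq0$. As $\iota$ is a morphism of MHS from weights $\le d-1$ to weights $\ge d-1$, it kills $W_{d-2}H^{d-1}_c$. Hence $\delta_c$ has non-zero image $\bar\delta_c$ in $\gr^W_{d-1}H^{d-1}_c(Z_t)$, and there $T\bar y=\bar y\pm\langle\iota y,\delta_c\rangle\bar\delta_c\neq\bar y$.

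This works for either parity of $\kappa$, including the unipotent (transvection) case, and makes Steps 1 and 2 unnecessary. Your closing paragraph is correct: the periods embed the local system dual to $\mathcal{W}_{d+1}$ into the direct sum of the solution systems of the minimal operators of a spanning family of interior monomial forms. So one of these operators has a genuine singularity at $t=1$.

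The paper does not use the topology of the degeneration here. It starts from the monomial form of Lemma \ref{lem:genuine-sing} and applies Proposition \ref{lem:quotients-Gm}: on $\mathcal{W}_{d+k}/\mathcal{W}_{d+k-1}$, $k\ge2$, $\theta$ acts on monomial forms by rational scalars. This lets it split off factors, $L=L'\prod_j(\theta-c_j)$, where $L'$ is the minimal operator of a form in $\mathcal{W}_{d+1}$. That argument stays inside the Dwork--Katz/GKZ description and gives extra information on minimal operators, such as the order bounds of Remark \ref{rem:orders-ranks}. Proposition \ref{lem:quotients-Gm} is the algebraic counterpart of your Step 1.

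Your argument needs the ODP, tameness at infinity and Poincar\'e duality with weights, and does not use Proposition \ref{lem:quotients-Gm}. In return it shows directly that if $R^{\kappa}\pi_{U!}\QQ$ has non-trivial local monodromy at $t=1$, then so does $\gr^W_\kappa R^{\kappa}\pi_{U!}\QQ$, which is what the proof of Theorem \ref{thm:non-trivial-isomorphism} ultimately uses. It also avoids having to check that a genuine singularity survives each factorisation $L=L_c(\theta-c)$.
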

\smallskip

To prove Proposition \ref{prop:coh-form-top}, we use the following results on the weighted pieces $\mathcal{W}_i$, whose proofs we defer to the end of this section. 

\begin{lem}\label{lem:weight-submodule}
    For $k=1, \dots, d$, $\mathcal{W}_{d+k}$  is a differential sub-module of  $\mathcal{H}^d(\widetilde{Z}_U/U)$.
\end{lem}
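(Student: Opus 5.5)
The lemma follows from two facts: the Gauss--Manin connection respects the weight filtration of a variation of mixed Hodge structure, and the $\mathcal{W}_{d+k}$ are subbundles of the right kind. The plan is to argue at the level of the variation of mixed Hodge structure on $\mathcal{H}^d(\widetilde{Z}_U/U)$ recalled in Section \ref{subsec:relative-coho}, and to use the combinatorial description of Theorem \ref{thm:Baty-summary} only to identify $\mathcal{W}_{d+k}$ concretely.

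\emph{Step 1: $\mathcal{W}_{d+k}$ is a sub-bundle.} By Theorem \ref{thm:Baty-summary}(3), applied fibrewise, $\mathcal{W}_{d+k}$ is the image of $\mathcal{I}^k$ in the quotient $S_\Delta^+/\bigoplus_i \mathcal{D}_i S_\Delta^+$. In relative form, it is the $\OO_U$-submodule spanned by the monomial forms $\omega_{(\beta_0,\beta)}$ with $(\beta_0,\beta)$ not lying on any face of $C(\Delta)$ of codimension $k$. Since the weight filtration of a VMHS consists of local sub-systems, the ranks of these pieces are constant on $U$. Hence $\mathcal{W}_{d+k}$ is a locally free subsheaf, with locally free quotient, of the vector bundle $\mathcal{H}^d(\widetilde{Z}_U/U)$.

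\emph{Step 2: $\nabla$-stability.} The weight filtration $W_\bullet R^d\widetilde{\pi}_{U*}\QQ$ is a filtration by sub-local systems; this is part of the definition of a (graded-polarisable, admissible) VMHS, and here it comes from the fibrewise compactification of Theorem \ref{thm:compactifying-delta} via Deligne's construction. Under the Riemann--Hilbert correspondence, the analytic Gauss--Manin connection is the analytification of the algebraic one (Section \ref{sec:local-systems}). So $W_{d+k}\otimes\OO_U$ is the sub-bundle generated by flat sections of a sub-local system, and it is preserved by $\nabla$. It coincides with $\mathcal{W}_{d+k}$ by Step 1. Therefore $\nabla_{d/dt}\mathcal{W}_{d+k}\subset \mathcal{W}_{d+k}$, and $\mathcal{W}_{d+k}$ is a $\mathcal{D}_U$-submodule.

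\emph{Optional direct check.} As a sanity check, one can verify Step 2 combinatorially. By \eqref{eq:construction-of-gauss-manin},
\[
\theta\,\omega_{(\beta_0,\beta)}=-\beta_0\sum_j k_j\gamma_j t^{k_j}\,\omega_{(\beta_0+1,\beta+m_j)},
\]
and $(\beta_0+1,\beta+m_j)=(\beta_0,\beta)+(1,m_j)$. Adding a point of $C(\Delta)$ never moves a lattice point onto a smaller face: a face containing the sum must contain both summands. So the ideal $\mathcal{I}^k$ is stable under this operation, which gives Step 2 directly.

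\emph{Main obstacle.} The main point needing care is Step 1, namely the compatibility of Batyrev's fibrewise description with the relative setting. One must know that the images of $\mathcal{I}^k$ really do form a sub-bundle whose fibres are the $W_{d+k}$. The direct combinatorial argument above avoids this issue for $\nabla$-stability, so I would present that as the main proof and use the VMHS argument only to justify that the span is the weight piece.
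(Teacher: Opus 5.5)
Your proposal is correct, and the direct check you choose as the main proof is essentially the paper's argument: the paper differentiates the universal form $\Omega = x^\beta \mathcal{F}^{-\beta_0}\,\frac{dx}{x}$ in $u_j$, observes that $\frac{m_j+\beta}{\beta_0+1}$ is a convex combination of $m_j$ and $\beta/\beta_0$ and so lies in the relative interior of a face of $\Delta$ of dimension at least that of the face containing $\beta/\beta_0$ (this is your observation that a face of $C(\Delta)$ containing a sum contains both summands), and then obtains $\frac{d}{dt}\omega$ by the chain rule through $u_h=\gamma_h t^{k_h}$, whereas you differentiate in $\theta$ directly on the specialised family. Your VMHS argument (the weight pieces of a VMHS are sub-local systems, hence flat sub-bundles) is also a complete proof on its own; it relies on the same identification of the weight pieces with Batyrev's combinatorial description that the paper takes for granted.
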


\begin{prop}\label{lem:quotients-Gm}
Suppose that $d \geq 2$. Let $\theta \coloneqq t \frac{d}{dt}$.   Let $\omega$ be a monomial form in $\mathcal{W}_{d+k}$. Then
\[
\theta \omega = c \, \omega \pmod{\mathcal{W}_{d+k-1}} \quad \text{ for some $c \in \QQ$}
\]
Thus
\begin{equation} \label{eq:W-decomposition-t}
     \mathcal{W}_{d+k}/\mathcal{W}_{d+k-1} = \bigoplus_{h} \CC(t)[\theta]/(\theta-c_h)
\end{equation}
for certain numbers $c_h \in \QQ$. 
\end{prop}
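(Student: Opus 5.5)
We compute $\theta\omega$ directly on monomial forms and reduce modulo $\mathcal{D}_i S_\Delta^+$, using the combinatorial description of the weight filtration in Theorem \ref{thm:Baty-summary}(3). Let $\omega=\omega_{(\beta_0,\beta)}$ correspond to the lattice point $(\beta_0,\beta)$ of $C(\Delta)$, so that $x_0^{\beta_0}x^\beta$ lies in $\mathcal{I}^{k}$ (this is the condition for $\omega\in\mathcal{W}_{d+k}$). Since $f=\sum_j \gamma_j t^{k_j}x^{m_j}$, differentiating gives
\[
\theta\omega=-\beta_0\frac{x^\beta\,\theta f}{f^{\beta_0+1}}\frac{dx}{x}=-\beta_0\sum_{j=1}^l k_j\gamma_j t^{k_j}\frac{x^{\beta+m_j}}{f^{\beta_0+1}}\frac{dx}{x}.
\]
In the Dwork--Katz ring this is, up to a scalar, $x_0^{\beta_0}x^\beta\cdot x_0\sum_j k_j u_j x^{m_j}$, where $u_j=\gamma_jt^{k_j}$.

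The key step is to rewrite $\sum_j k_j u_j x^{m_j}$ using the relations $\widetilde f_i = x_i\partial_i\widetilde f$. We have $\widetilde f_0 = x_0 f$ and $\widetilde f_i=x_0\sum_j m_{ij}u_jx^{m_j}$. Because the matrix $A$ is invertible, the rows $(1,\dots,1)$ and $(m_{i1},\dots,m_{il})$ span, over $\QQ$, every vector orthogonal to $\gamma$. Since $\sum_j k_j\gamma_j=1$, I write
\[
k=\lambda_0(1,\dots,1)+\sum_i\lambda_i m_{i\cdot}+c\,e,
\]
where $e$ is any fixed vector with $\gamma\cdot e=1$. A convenient choice is $e=\gamma/|\gamma|^2$, which gives $k\equiv \gamma/|\gamma|^2$ modulo the span of the rows. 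Then $x_0\sum_j k_ju_jx^{m_j}$ equals a $\QQ$-combination of the $\widetilde f_i$ plus $|\gamma|^{-2}x_0\sum_j\gamma_ju_jx^{m_j}$. The $\widetilde f_i$ terms are handled by $\mathcal{D}_i=x_i\partial_i+\widetilde f_i$: multiplying by the monomial $P=x_0^{\beta_0}x^\beta$, we get $P\widetilde f_i \equiv -x_i\partial_i P=-\beta_i P$ (with $\beta_0$ for $i=0$). These contribute rational multiples of $\omega$ itself, and this is the source of the constant $c\in\QQ$.

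It remains to show that the leftover term $P\,x_0\sum_j\gamma_j u_j x^{m_j}$ lies in $\mathcal{W}_{d+k-1}$, i.e.\ in $\rho(\mathcal{I}^{k-1})$ after reduction. This is the main obstacle. Its monomials are $x_0^{\beta_0+1}x^{\beta+m_j}$, and the face of $C(\Delta)$ minimal containing such a point is in general larger than that of $P$. The plan is as follows. If $P$ lies in the relative interior of a face $\sigma$ of codimension $<k$, take a linear functional $v$ cutting out a facet containing $\sigma$. The weighted sum $\sum_j\gamma_j\langle v,(1,m_j)\rangle=0$ lets me regroup $\sum_j\gamma_ju_jx^{m_j}$ into combinations that push the exponent strictly inside or strictly higher in codimension. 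More precisely, I use again the freedom in the choice of $e$ (any $e$ with $\gamma\cdot e=1$, modulo the row span) to choose $e$ supported on indices $j$ with $m_j\notin\sigma$'s supporting face. When $\sigma$ is proper this is possible because $\Delta$ is simplicial and $\gamma_j\neq0$ for all $j$. When $\sigma$ is interior the claim is vacuous for $k=1$, since $\mathcal{W}_d=0$.

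With this choice, every monomial $x_0^{\beta_0+1}x^{\beta+m_j}$ with $e_j\ne0$ lies off the faces of codimension $\le$ that of $\sigma$, hence in $\mathcal{I}^{k-1}$. Here $d\ge2$ is used to guarantee enough vertices off each face, and it also avoids the primitive-part correction of Remark \ref{rem:primitive}. This gives $\theta\omega\equiv c\,\omega \pmod{\mathcal{W}_{d+k-1}}$. Finally, by Lemma \ref{lem:weight-submodule} each graded piece is a $\CC(t)[\theta]$-module spanned by monomial forms, each of which is an eigenvector of $\theta$ with a rational eigenvalue. A module spanned by such eigenvectors is semisimple, which yields the decomposition \eqref{eq:W-decomposition-t}.
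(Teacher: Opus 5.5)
Your route is different from the paper's, and for most monomial forms it is cleaner. The paper computes each $\theta_j\Omega$ separately modulo lower weight. It drops the terms $u_jx^{m_j+\beta}/\mathcal{F}^{\beta_0+1}$ with $m_j\notin F$ and solves the $d+1$ relations \eqref{eq:relation-face}, \eqref{eq:exact-relations-face} for the remaining ones. This gives $\mu_j\Omega$, with $\mu$ the barycentric coordinates of $\beta/\beta_0$, hence $c=-\beta_0\sum_j k_j\mu_j$.

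You handle only the combination $\theta=\sum_j k_j\theta_j$. You split $k=\lambda+e$, with $\lambda$ in the $\QQ$-span of the first $l-1$ rows of $A$, which the $\mathcal{D}_i$ absorb into $-(\lambda_0\beta_0+\sum_i\lambda_i\beta_i)P$. For $e$, take the vector equal to $1/\gamma_{j_0}$ at a single index with $m_{j_0}\notin\sigma$ and $0$ elsewhere. The leftover $x_0^{\beta_0+1}x^{\beta+m_{j_0}}$ then lies over the relative interior of a face strictly containing $\sigma$, hence in $\mathcal{I}^{k-1}$. When $\beta/\beta_0$ is in the relative interior of a face of codimension $k-1$, this is correct, gives the same $c$, and needs no uniqueness statement.

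Two claims in your write-up are wrong and should be removed.
\begin{enumerate}
\item With $e=\gamma/|\gamma|^2$ the leftover is not in lower weight. Modulo $\mathcal{W}_{d+k-1}$ it equals $-\beta_0\bigl(\sum_j\gamma_j\mu_j\bigr)P$. For instance, for $\beta/\beta_0=(2,1)$, the midpoint of the edge $[m_1,m_3]$ in Example~\ref{exa:curve-ft}, it equals $P\neq 0$.
\item The case $k=1$ is not vacuous. Since $\mathcal{W}_d=0$, the claim would be the identity $\theta\omega=c\,\omega$, which is false: the forms in the Appendix have minimal operators of order $4$. The statement is meant for $2\le k\le d$, as in the paper's proof, and for such $k$ an interior $P$ already lies in $\mathcal{I}^{k-1}$.
\end{enumerate}

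The genuine gap is the vertex case for $k=d$. You identify ``$\omega\in\mathcal{W}_{d+k}$'' with $P\in\mathcal{I}^k$. But Theorem~\ref{thm:Baty-summary}(3) gives $\mathcal{W}_{d+j}=\rho(\mathcal{I}^j)$ only for $j\le d-1$. $\mathcal{W}_{2d}$ is everything, and it contains the forms with $P=x_0^{\beta_0}x^{\beta_0 m_h}$, $m_h$ a vertex. These are not in $\mathcal{I}^d$, and your argument never treats them.

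For such $P$ your trick needs an index $j_0$ with $m_{j_0}$ not joined to $m_h$ by an edge; otherwise $x_0^{\beta_0+1}x^{\beta_0m_h+m_{j_0}}$ lies over an edge and survives modulo $\mathcal{I}^{d-1}$. Such a $j_0$ exists when $\Delta$ is a simplex (take the interior point) or when $d=2$ (take the opposite vertex), but not in general. Two of the $m_j$ span an edge if and only if the remaining entries of $\gamma$ have both signs. So for $\gamma=(-3,-3,2,2,2)$, where $\Delta$ is a triangular bipyramid, each equatorial vertex is joined to all four other $m_j$.

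In that case all $l$ terms $y_j=u_jx_0^{\beta_0+1}x^{\beta_0 m_h+m_j}$ survive modulo $\mathcal{I}^{d-1}$. The $d+1=l-1$ relations $\mathcal{D}_iP$ determine $\sum_j v_jy_j$ only for $v\perp\gamma$, while $\gamma\cdot k=1$, so no choice of $e$ closes the argument. The paper treats vertices with the same linear system over the $m_j$ adjacent to $m_h$ and asserts there are at most $l-1$ of them; that assertion also fails in this example.

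What is missing is an argument that $y_j\equiv 0$ in $\gr^W_{2d}$ for $j\neq h$. In the bipyramid this follows from the relations attached to an adjacent apex, which does have a non-neighbour, together with the relations at the lattice points inside the edge joining it to $m_h$. When $\gamma$ has at least three entries of each sign, no vertex has a non-neighbour. There you need some other input, for instance a period computation over the torus cycle at $m_h$ and the small tori around the edges through $m_h$. All such periods of the vertex form are constant multiples of $u_h^{-\beta_0}$.
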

\smallskip

\begin{proof}[Proof of Proposition \ref{prop:coh-form-top}]
Let $\omega$ be an output of Lemma \ref{lem:genuine-sing}. Assume that $\omega$ is in $\mathcal{W}_{d+k}\setminus \mathcal{W}_{d+k-1}$, where $k >1$. 
Then, by Proposition \ref{lem:quotients-Gm}, there exists $c$ so that $(\theta-c)  \omega \in \mathcal{W}_{d+i}$ with $d+i < d+k$. 

Let $L$ be the minimal differential operator of $\omega$ and let $r$ be its order. Then one can write:
\[
L \; \omega= \left(\sum_{j=1}^r q_j(t) (\theta-c)^j \right) \omega+q_0(t) \omega
\] for certain rational functions $q_j(t)$ for $j=0, \dots, r$.
Combining this with the fact that the first summand on the right-hand side is in $\mathcal{W}_{d+i}$, one finds that $q_0(t)=0$. In other words, 
$L= L_c  \; (\theta -c)$ for some operator $L_c$ with a genuine singularity at $t=1$. By minimality of $L$, up to dividing by a rational function, the operator $L_c$ is the minimal differential operator of $(\theta -c)\omega \in \mathcal{W}_{d+i}$.
Applying this argument repeatedly, one finds:
\[ 
L\; \omega= L' \prod_j (\theta- c_j) \; \omega
\] for some $c_j \in \QQ$ and some operator $L^\prime$ with a genuine singularity at $t=1$. Moreover, $L^\prime$ is the minimal differential operator of the form $\prod_j (\theta- c) \; \omega \in \mathcal{W}_{d+1}$. This proves the statement.
\end{proof}

\begin{rem} 
\label{rem:orders-ranks}
By Lemma \ref{lem:weight-submodule}, the minimal differential operator of a monomial form in $\mathcal{W}_{d+k}$ has order at most equal to $\rk\mathcal{W}_{d+k}$. In addition, the proof of Proposition \ref{prop:coh-form-top} shows that this order is at most equal to $ \operatorname{rk}  \mathcal{W}_{d+1} + k-1$. 
Note that this does not hold for a general form in $\mathcal{W}_{d+k}$. Indeed, $\mathcal{W}_{d+k}$ must have a cyclic vector, and the rank of $\mathcal{W}_{d+k}$ is generically greater than
$ \operatorname{rk}  \mathcal{W}_{d+1} + k-1$. 
\end{rem}

Let $\mathcal{Z}$ be as in Proposition \ref{prop:u-result}, and let $\mathcal{U}=(\CC^\times)^l \setminus \{\prod_{j=1}^l u_j^{\gamma_j} -\Gamma\}$. Let $\mathscr{W}_i={W}_i\mathcal{H}^d(\widetilde{\mathcal{Z}}_\mathcal{U}/\mathcal{U})$.
Lemma \ref{lem:weight-submodule} follows immediately from the following more general Lemma \ref{lem:weight-submodule-general} as, for any monomial form $\omega$ as in Equation \eqref{eq:mono-form-step1}, we have
\[
\frac{d}{dt} \omega= \sum_{j=1}^l \gamma_j k_jt^{k_j-1} 
\left(\partial_j \Omega\right)_{\big|\substack{u_h=\gamma_h t^{k_h}}} 
\]
In the same way, Proposition \ref{lem:quotients-Gm} follows immediately from the following more general Proposition \ref{lem:quotients-Gm-general} as
\begin{align*}
\theta  \omega   =  \sum_{j=1}^l \gamma_j k_j t^{k_j} \left( {\partial_j \Omega} \right)_{\big|\substack{u_h=\gamma_h t^{k_h}}}=  \sum_{j=1}^l  k_j \left( \theta_j \Omega \right)_{\big|\substack{u_h=\gamma_h t^{k_h}}}
\end{align*}

\begin{lem}\label{lem:weight-submodule-general}
    For $k=1, \dots, d$, $\mathscr{W}_{d+k}$  is a differential sub-module of $\mathcal{H}(\widetilde{\mathcal{Z}}_\mathcal{U}/\mathcal{U})$.
\end{lem}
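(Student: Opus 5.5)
We have to show that, for $k=1,\dots,d$, the weighted piece $\mathscr{W}_{d+k}$ is stable under $\nabla_{\partial_j}$ for every $j=1,\dots,l$. There is a soft argument: the Gauss--Manin connection of a VMHS satisfies Griffiths transversality for $F^\bullet$ but preserves $W_\bullet$, so each $W_i$ is a flat subbundle. That alone would prove the claim. However, the paper works with the Dwork--Katz description of the weight filtration, and later (Proposition \ref{lem:quotients-Gm-general}) needs an explicit formula for the action modulo lower weight. I would therefore give the direct combinatorial proof, which also sets up that formula.

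By Theorem \ref{thm:Baty-summary}(3), applied fibrewise over $\mathcal{U}$, $\mathscr{W}_{d+k}$ is spanned by the classes $\rho(x_0^{\beta_0}x^\beta)$ of monomial forms $\omega_{(\beta_0,\beta)}$. Here $(\beta_0,\beta)$ ranges over lattice points of $C(\Delta)$ lying in $\mathcal{I}^{k}$, i.e.\ points not on any face of $C(\Delta)$ of codimension $k$. Equivalently, the minimal face of $C(\Delta)$ containing $(\beta_0,\beta)$ has codimension $<k$.

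The key computation uses \eqref{eq:construction-of-gauss-manin}:
\[
\partial_j\, \omega_{(\beta_0,\beta)} = -\beta_0\,\frac{x^{\beta+m_j}}{\mathcal{F}^{\beta_0+1}}\frac{dx}{x} = -\beta_0\,\omega_{(\beta_0+1,\beta+m_j)},
\]
which corresponds to multiplication by the monomial $x_0x^{m_j}$ in $S_\Delta^+$ (up to the normalising factor of $\mathcal{R}$). The point $(1,m_j)$ lies in $C(\Delta)$, since $m_j$ is a vertex of $\Delta$. Any face of $C(\Delta)$ is cut out by a supporting linear functional $\ell\ge 0$ on $C(\Delta)$. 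If $\ell(\beta_0+1,\beta+m_j)=0$, then both $\ell(\beta_0,\beta)=0$ and $\ell(1,m_j)=0$. Hence every face containing the sum also contains $(\beta_0,\beta)$. So the minimal face of the sum contains the minimal face of $(\beta_0,\beta)$, and its codimension is no larger. In other words, $\mathcal{I}^k$ is an ideal of $S_\Delta^+$; this is stated in its definition, but the face argument confirms it. Therefore $\partial_j\omega_{(\beta_0,\beta)}\in\mathscr{W}_{d+k}$.

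Since $\mathscr{W}_{d+k}$ is the $\mathcal{O}_{\mathcal{U}}$-span of such forms, the Leibniz rule extends the stability to all sections. The remaining concern is that the Dwork--Katz identification holds in the relative setting over $\mathcal{U}$. By the remark in Section \ref{subsec:relative-coho}, the fibrewise description globalises, and the classes $\rho(\mathcal{I}^k)$ form a subsheaf whose fibres give $W_{d+k}$. I expect this identification step to be the main (mild) obstacle. I would handle it by the flatness argument above: the weight filtration is a filtration by flat subbundles, and the monomial spanning set is compatible with specialisation.

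For $k=d$, $\mathscr{W}_{2d}$ is the whole sheaf, so that case is trivial.
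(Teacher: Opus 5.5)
Your proof is correct and follows essentially the same route as the paper. You compute $\partial_j\Omega_{(\beta_0,\beta)}=-\beta_0\,\Omega_{(\beta_0+1,\beta+m_j)}$ and observe that adding $(1,m_j)$ cannot move the point into a smaller face; your supporting-functional argument is a cleaner phrasing of the paper's convex-combination remark, and the paper likewise takes the relative Dwork--Katz description of $W_\bullet$ for granted. One harmless slip: $(1,m_j)\in C(\Delta)$ because $m_j\in\Delta$, not because $m_j$ is a vertex, since when $\pm\gamma$ has a single negative entry one $m_j$ lies in the interior of the simplex $\Delta$.
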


\begin{prop}\label{lem:quotients-Gm-general}
Suppose that $d \geq 2$.  Let $\Omega$ be a monomial form in $\mathscr{W}_{d+k}$. Then, for $j=1, \dots ,l$,
\[
\theta_j\Omega = d_{j} \, \Omega \pmod{\mathscr{W}_{d+k-1}} \quad \text{ for some $d_{j} \in \QQ$}
\]
Thus
\begin{equation} \label{eq:W-decomposition-u}
     \mathscr{W}_{d+k}/\mathscr{W}_{d+k-1} = \bigoplus_{h} \CC(u_1, \dots, u_l)[\theta_1, \dots, \theta_l]/(\theta_1-d_{h,1}, \dots, \theta_l-d_{h,l})
\end{equation}
for certain numbers $d_{h,j} \in \QQ$.
\end{prop}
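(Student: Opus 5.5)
The plan is to compute $\theta_j\Omega$ directly for a monomial form $\Omega=\omega_{(\beta_0,\beta)}=x^\beta\mathcal{F}^{-\beta_0}\frac{dx}{x}$. By the Gauss--Manin formula \eqref{eq:construction-of-gauss-manin},
\[
\theta_j \Omega = -\beta_0\, \frac{u_j x^{\beta+m_j}}{\mathcal{F}^{\beta_0+1}}\frac{dx}{x},
\]
which under the Dwork--Katz dictionary of Theorem \ref{thm:Baty-summary} is (up to the normalising constant $\beta_0$ in $\mathcal{R}$) the class of the monomial $u_j x_0^{\beta_0+1}x^{\beta+m_j}$. So the question becomes combinatorial: modulo the relations $\mathcal{D}_i S_\Delta^+$ and modulo the weight piece $\rho(\mathcal{I}^{k-1})$, we must show $x_0 u_j x^{m_j}\cdot(x_0^{\beta_0}x^\beta)$ is congruent to a rational multiple of $x_0^{\beta_0}x^\beta$. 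Note that $\Omega\in\mathscr{W}_{d+k}$ means the point $p=(\beta_0,\beta)$ lies in the relative interior of a face $\sigma$ of $C(\Delta)$ of codimension at most $k-1$; one reduces to the case of monomials lying in $\mathcal{I}^{k}\setminus\mathcal{I}^{k-1}$, i.e. exactly on a codimension-$(k-1)$ face $\sigma$ (the deeper ones are handled by induction/already in $\mathscr{W}_{d+k-1}$).

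The key step is a case split on $j$. If $(1,m_j)\notin\sigma$, then $p+(1,m_j)$ lies in the relative interior of a strictly larger face (smaller codimension), hence in $\mathcal{I}^{k-1}$, so $\theta_j\Omega\equiv 0$ mod $\mathscr{W}_{d+k-1}$ and $d_j=0$. If $(1,m_j)\in\sigma$, I use the operators $\mathcal{D}_i$ restricted to the face: since $\sigma$ is a proper face (using $d\ge 2$ and that proper faces of $\Delta$ are simplices by Remark \ref{rem:mj-polytope}), the generators $(1,m_j)$ with $m_j\in\sigma$ are linearly independent, say there are $r+1$ of them with $r+1\le d$. Consider the relations $\mathcal{D}_i(x_0^{\beta_0}x^\beta)$ for $i=0,\dots,d$: they read $p_i\, x^p + \sum_{h} (1,m_h)_i\, u_h x_0 x^{m_h} x^p \equiv 0$ (with $\mathcal{D}_0$ giving $\beta_0 x^p+x_0\mathcal{F}x^p$, up to the $-1$ term which is absorbed since $\widetilde f=x_0\mathcal F-1$). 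Modulo $\mathcal{I}^{k-1}$ the terms with $(1,m_h)\notin\sigma$ vanish by the previous case, leaving the linear system $\sum_{h:\,m_h\in\sigma}(1,m_h)\,[u_hx_0x^{m_h}x^p]\equiv -p\,[x^p]$. Since $p$ lies in the cone spanned by the independent vectors $(1,m_h)$, $m_h\in\sigma$, write $p=\sum_h \lambda_h(1,m_h)$ with $\lambda_h\in\QQ_{>0}$; linear independence then lets us solve uniquely: $[u_h x_0x^{m_h}x^p]\equiv-\lambda_h[x^p]$. Translating back through $\mathcal{R}$ gives $\theta_j\Omega\equiv d_j\Omega$ with $d_j=-\lambda_j$ (rational since $\Delta$ is a lattice polytope), and $d_j=0$ for the other $j$.

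The decomposition \eqref{eq:W-decomposition-u} then follows: the graded piece is spanned by classes of monomial forms on faces of codimension exactly $k-1$, each an eigenvector for all $\theta_j$, so choosing a basis among them gives the stated direct sum of rank-one modules $\CC(u)[\theta]/(\theta_j-d_{h,j})$. The main obstacle I expect is justifying that the quotient $\mathscr{W}_{d+k}/\mathscr{W}_{d+k-1}$ really is computed by the face-restricted Jacobian-type relations, i.e. that the images of $\mathcal{D}_i S_\Delta^+$ are compatible with the filtration by the $\mathcal{I}^{\ell}$ (the filtration by faces must be strict for the Dwork--Katz complex); I would invoke the graded description in \cite[Sections 8--9]{Batyrev-tori}, where $\gr^W$ is identified with a sum over faces of the Jacobian rings of $\mathcal{F}_{|\sigma}$, and for simplex faces these Jacobian-type rings are exactly what the above linear algebra computes. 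The case $d\ge2$ is needed so that relevant faces are proper simplicial faces where independence holds.
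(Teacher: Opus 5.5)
You have the right mechanism, and it is the paper's, for a monomial whose point $p=(\beta_0,\beta)$ lies in the relative interior of a face of codimension exactly $k-1$, with $2\le k\le d$. The gap is the top graded piece $k=d$: there the vertex monomials must also be treated, and your first case fails for them.

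\textbf{The case you do cover.} The paper's relations $\Omega=\sum_j u_j x^{m_j+\beta}\mathcal{F}^{-\beta_0-1}\frac{dx}{x}$ and its exactness relations are exactly your $\mathcal{D}_0$- and $\mathcal{D}_i$-relations. The paper drops the terms with $m_j\notin F$ modulo $\mathscr{W}_{d+k-1}$ and solves using independence of the vertices of the simplex $F$. This gives $d_j=-\beta_0\mu_j$, which is your $-\lambda_j$.

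Two side remarks. First, the obstacle you anticipate (strictness, the Jacobian-ring description of $\gr^W$) does not arise. The relations hold in $\mathcal{H}^d$, and you only reduce them modulo $\mathscr{W}_{d+k-1}=\rho(\mathcal{I}^{k-1})$. A linear system whose coefficient vectors are independent has a unique solution in any vector space, in particular in the quotient. Second, properness of $\sigma$ comes from $k\ge 2$, not from $d\ge 2$; for $k=1$ the claim is false.

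\textbf{The gap at $k=d$.} You identify $\mathscr{W}_{d+k}$ with monomials on faces of codimension at most $k-1$, but this holds only for $k\le d-1$. By Theorem \ref{thm:Baty-summary}(3), $\mathscr{W}_{2d}$ is all of $\mathcal{H}^d$. So $\mathscr{W}_{2d}/\mathscr{W}_{2d-1}$ also contains the vertex monomials with $\beta/\beta_0=m_h$. For instance, $\frac{dx}{x}=\sum_s u_s\frac{x^{m_s}}{\mathcal{F}}\frac{dx}{x}$ is a combination of such monomials and is nonzero there.

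For such $\Omega$ your first case breaks. If $[m_h,m_j]$ is an edge of $\Delta$, then $p+(1,m_j)$ lies in the relative interior of the cone over that edge. That face has codimension $d-1=k-1$, so $p+(1,m_j)$ is not in $\mathcal{I}^{d-1}$. Hence $\theta_j\Omega$ need not vanish modulo $\mathscr{W}_{2d-1}$, and the system you solve is missing unknowns.

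The paper treats this case separately. It takes $F$ to be $m_h$ together with all $m_j$ joined to $m_h$ by an edge, keeps exactly those unknowns, and solves the same system. This gives $\theta_h\Omega\equiv-\beta_0\Omega$ and $\theta_j\Omega\equiv 0$ for $j\neq h$. You need to add this case.

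\textbf{A caution for that case.} Uniqueness needs the $(1,m_j)$, $m_j\in F$, to be independent, that is $|F|\le l-1$, and this is not automatic. Take $d=3$ and $\gamma$ with two entries of one sign and three of the other. Then $\Delta$ is a bipyramid, and each equatorial vertex is joined to all other $m_j$, leaving a one-parameter ambiguity along $\gamma$. There you must also use the relations at an apex, where $|F|=l-1$. Combined with the three-term relations along each edge cone, these force the relevant apex--equator edge terms to vanish modulo $\mathscr{W}_{2d-1}$, which removes the ambiguity.
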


\begin{proof}[Proof of Lemma \ref{lem:weight-submodule-general}] If $d=1$, there is nothing to prove. Let $d \geq 2$. 
By Theorem \ref{thm:Baty-summary} it is enough to check that, for all  $i \in \{2, \dots, d\}$, if $\Omega=\Omega_{(\beta_0, \beta)} $ is a monomial form such that
$\beta/\beta_0$ is in the relative interior of a face of $\Delta$ of dimension at least $i$, then $
\partial_j\Omega$ 
is a linear combination of monomial forms $\Omega_{(\beta^j_0, \beta^j)}$ such that $\beta^j/\beta^j_0$ lies in the relative interior of a face of $\Delta$ of dimension at least $i$. 

Fix $i$ and $\Omega$.
We have:
\begin{align*}
\partial_j \Omega =   -\beta_0  \frac{x^{m_{j}+\beta} }{\mathcal{F}^{\beta_0+1}}   \frac{dx}{x}
\end{align*}
Now, we can write
\[
\frac{m_{j}+\beta}{\beta_0+1} =\frac{1}{\beta_0+1} \; m_{j} + \frac{\beta_0}{\beta_0+1}\frac{\beta}{\beta_0}
\]
In other words, $\frac{m_{j}+\beta}{\beta_0+1}$ is a convex linear combination of two points of $\Delta$,  one of which lies in the relative interior of a face of dimension at least $i$. 
Then $\frac{m_{j}+\beta}{\beta_0+1}$ necessarily lies in the relative interior of a face of dimension at least $i$. 
\end{proof}

\begin{proof}[Proof of Proposition \ref{lem:quotients-Gm-general}]
Fix $k \in \{2,\dots, d\}$. It is enough to prove that, for all $j=1, \dots, l$ and for each monomial form $\Omega$ in $\mathscr{W}_{d+k}/\mathscr{W}_{d+k-1}$, there exists $d_j\in \mathbb{Q}$ such that $\theta_j  \Omega=d_j \,  \Omega$. 

Let $\Omega=\Omega_{(\beta_0, \beta)}$ be such a form. 
If $k<d$, then $\beta/\beta_0$ is in the relative interior a face $F$ of $\Delta$ of dimension $d-(k-1)$. If $k=d$, then $\beta/\beta_0$ is either in the relative interior of an edge $F$ of $\Delta$ or is a vertex of $\Delta$. 

For $j=1, \dots, l$, we have 
\begin{equation}
\label{eq:thetaj-omega} 
    \theta_j \Omega = 
    -\beta_0 u_j \frac{x^{m_{j}+\beta}}{\mathcal{F}^{\beta_0+1}} \frac{dx}{x}
    \end{equation}
We have the relation
\begin{equation}
\label{eq:exact-relation-zero}
0= \Omega -\frac{x^\beta \mathcal{F} }{{\mathcal{F}}^{\beta_0+1}}\frac{dx}{x} =
\Omega-\sum_{j=1}^l u_j \frac{x^{m_{j}+\beta} }{{\mathcal{F}}^{\beta_0+1}}\frac{dx}{x} 
\end{equation}   
and, for $i=1,\dots,d$, we have the exactness relations

\begin{equation}
\label{eq:exact-relations}
\begin{split}
    0 = \pm d \left(\frac{x^{\beta}}{\mathcal{F}^{\beta_0}} \frac{dx_1}{x_1} \wedge \dots \wedge \widehat{\frac{dx_i}{x_i}} \wedge \dots \wedge \frac{dx_{d}}{x_d}\right)
    = -\beta_i \Omega + \beta_0 \sum_{j=1}^l u_j m_{ij}\frac{x^{m_{j}+\beta}}{\mathcal{F}^{\beta_0+1}}\frac{dx}{x} 
\end{split} 
\end{equation}
where $d=\sum_{i=1}^d \frac{d}{dx_i} \; dx_i$.

For each $j=1, \dots, l$,
the term 
\[
\frac{x^{m_{j}+\beta} }{{\mathcal{F}}^{\beta_0+1}}\frac{dx}{x}  \in \mathscr{W}_{d+k} 
\]
If it is nonzero modulo $\mathscr{W}_{d+k-1}$, then $\frac{m_{j}+\beta}{\beta_0+1}$  does not belong to the relative interior of any face of $\Delta$ of dimension strictly greater than $d-(k-1)$. 
If $k<d$, this is true if and only if $m_j \in F$.  
If $k=d$ and $\beta/\beta_0$ is in the relative interior of the edge $F$, this is true if and only if $m_j \in F$. If $k=d$ and $\beta/\beta_0$ is a vertex, this is true if and only if $\beta/\beta_0$ and $m_j$ are contained in an edge. In this last case, we denote by $F$ the set of all $m_j$ such that $\beta/\beta_0$ and $m_j$ are contained in an edge.  Note that, in each case, the number of $m_j$ in $F$ is $\leq l-1$.

Then, in $\mathscr{W}_{d+k}/\mathscr{W}_{d+k-1}$, we have that
\begin{equation}
\label{eq:theta-j-cases}
   \theta_j \Omega = \begin{cases}
        -\beta_0 u_j\frac{x^{m_{j}+\beta}}{\mathcal{F}^{\beta_0+1}} & \text{if}   \ \beta/\beta_0 \in F\\
      0 & \text{ otherwise}
  \end{cases} \quad \text{for} \ j=1, \dots, l
\end{equation}
and that
\begin{equation}
\label{eq:relation-face}
    \Omega=\sum_{\substack{m_{j} \in F}}  u_j \frac{x^{m_{j}+\beta} }{{\mathcal{F}}^{\beta_0+1}}\frac{dx}{x} 
\end{equation}
\begin{equation}
    \label{eq:exact-relations-face}
     \frac{\beta_i}{\beta_0} \Omega = \sum_{\substack{m_{j} \in F}}  m_{ij} u_j \frac{x^{m_{j}+\beta}} {\mathcal{F}^{\beta_0+1}}\frac{dx}{x} \quad \text{for } i=1,\dots,d 
\end{equation}
Equations \eqref{eq:relation-face} and \eqref{eq:exact-relations-face}, seen as $d+1$ linear equations in the unknowns $u_j \frac{x^{m_{j}+\beta} }{{F}^{\beta_0+1}}\frac{dx}{x}$, admit the solution 
\begin{equation}
\label{eq:solution}
u_j\frac{x^{m_{j}+\beta} }{{\mathcal{F}}^{\beta_0+1}}\frac{dx}{x}=\mu_j \cdot \Omega
\end{equation}
where $\beta/\beta_0$ is the convex linear combination \[ 
\beta/\beta_0=\sum_{m_j \in F} \mu_j m_j\] (if $\beta/\beta_0=m_h$, then $\mu_h=1$, and $\mu_j=0$ for $j \neq h$). 
This solution is unique since the number of $m_j$ in $F$ is $\leq l-1$ (see the proof of Proposition \ref{prop:u-result}).
Substituting \eqref{eq:solution} in  \eqref{eq:theta-j-cases} gives the statement. 
\end{proof}

\subsection{Proof of Theorem \ref{thm:non-trivial-isomorphism}} \label{sec:3}
Pick a monomial form $\omega \in {W}_{d+1} P\mathcal{H}^d(\widetilde{Z}_U/U)$ such that its minimal differential operator $L\coloneqq L_{\omega}$ has a genuine singularity at $t=1$. Such a form exists by Remark \ref{rem:primitive} and Proposition \ref{prop:coh-form-top}.

By Lemma \ref{lem:weight-submodule}, the Poincar\'e residue mapping, and Corollary \ref{cor:rank-eq}, we have that

\begin{equation} 
\label{eq:inequality-1}
\mathrm{ord} \, L \leq \rk \mathcal{W}_{d+1}P\mathcal{H}^d(\widetilde{Z}_U/U)= \rk {W}_{d-1}(P\mathcal{H}^{d-1}(Z_U/U))= \rk\HH
\end{equation}

Let $H \coloneqq H^{\mathrm{GKZ}}_\omega$.
For $D=L$ or $D$, let $\rho_{D}$ be the monodromy representation of the local system of solutions of $D \cdot \varphi=0$, and let $\rho_{D,s}$ be the local monodromy at $t=s$. 

By Corollary \ref{cor:omega-HGKZ},  $\omega$ is annihilated by $H$. Thus  $H$ is a left multiple of $L$. Hence, $\rho_L$ is a sub-representation of  $\rho_H$. 

Since $H$ is hypergeometric, $\rho_{H, 1}$ is a pseudoreflection (see Section \ref{sec:hgm-ls}). Therefore, $\rho_{L,1}$ is either a pseudoreflection or is the identity. 
Since $L$ has a genuine singularity at $t=1$, the latter cannot happen. 
It follows that the quotient representation 
\[ \bar{\rho}=\rho_H/\rho_L
\] is trivial at $t=1$, that is,  $\bar{\rho}_{\infty}= {\bar{\rho}_{0}}^{\; -1}$. In particular, the eigenvalues of 
$\bar{\rho}_{\infty}$ and of ${\bar{\rho}_{0}}^{\; -1}$ coincide.
The eigenvalues of 
$\bar{\rho}_{\infty}$ and of ${\bar{\rho}_{0}}^{\; -1}$ form a subset of the eigenvalues of 
${\rho}_{H,\infty}$ and of ${{\rho}_{H,0}}^{-1}$, which are given, respectively, by the $\alpha_{\bar{\eta},i}$ and the $\beta_{\bar{\eta},i}$ in \eqref{eq:alpha-beta-eta} with $\bar{\eta}$ the unique solution to \eqref{eq:unique-eta}. Therefore, the dimension of $\bar{\rho}$ is bounded above by the number of $\alpha_{\bar{\eta},i}$ and $\beta_{\bar{\eta},i}$ that differ by an integer, i.e., by the difference $\mathrm{ord}\,H - \rk \HH$ (see the end of Remark \ref{rem:H-GKZ}). 
This implies that 
\begin{equation}
\label{eq:inequality-2}
\mathrm{ord} \,L \geq \rk \HH 
\end{equation} 

Combining \eqref{eq:inequality-1} and \eqref{eq:inequality-2} we find
$
\mathrm{ord} L= \rk\HH
$. It follows that the local system of solutions of $L \cdot \varphi =0$ is isomorphic to 
the local system
$\gr^W_{\kappa} PR^{\kappa} \pi_{U !} \CC$. 
Moreover, the local monodromies $\rho_{L,s}$ have the same eigenvalues as the local monodromies $\rho_s$ of $\HH$.
But then, by Proposition \ref{pro:levelt}, $\gr^W_{\kappa} PR^{\kappa} \pi_{U !} \CC$ is isomorphic to $\HH$.  \qed
\smallskip

\section{Proof of Theorem \ref{thm:curves-even}}
\label{sec:proof-thm-2}
We begin by stating two results that form the key ingredients of the proof.
As before, we write $Z_t=\pi^{-1}(t)$.
We denote by $x$ the unique singular point of $Z_1$, and by $F_x$ the Milnor fibre at $x$. 
We refer to \cite{Milnor} for the notions of Milnor fibration and Milnor fibre.
We write $\mathrm{id}$ for the identity map.

\begin{prop} 
\label{pro:pro-sequence} 
Let $t$ be a regular value of $\pi$. Let $\rho_{\pi,1}$ be the local monodromy of $R^{d-1} {\pi_U}_{ !}  \QQ$ at $1$. Then:
\begin{enumerate}
\item[(i)] There is an exact sequence: 
\begin{equation}
\label{eq:sequence}
0 \to H^{d-1}_c(Z_1, \QQ) \to H^{d-1}_c(Z_t, \QQ) \to H^{d-1}(F_x, \QQ) \to H^d_c(Z_1, \QQ) \to H^d_c(Z_t, \QQ) \to 0
\end{equation} and  $H^{d-1}(F_x, \QQ)\simeq \QQ$.
\item[(ii)] 
The monodromy operators $M_1$ on the terms of \eqref{eq:sequence} given by
\begin{itemize}
\item for $i=d-1,d$, $M_1\coloneqq \mathrm{id}$ on $H^{i}_c(Z_1, \QQ)$
\item for $i=d-1,d$, $M_1\coloneqq \rho_{\pi, 1}$ on $H^{i}_c(Z_t, \QQ)$
\item $M_1\coloneqq-\mathrm{id}$ on $H^{d-1}(F_x, \QQ)$ if $d$ is odd, $M_1\coloneqq\mathrm{id}$ on $H^{d-1}(F_x, \QQ)$ if $d$ is even
\end{itemize}
are compatible with  \eqref{eq:sequence}.
\end{enumerate}
\end{prop}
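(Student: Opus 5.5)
We will derive the sequence \eqref{eq:sequence} from the formalism of nearby and vanishing cycles, applied to the proper map $\bar\pi\colon V\to\CC^\times$ of Theorem \ref{thm:compactifying-delta} rather than to $\pi$ itself. Restrict to a small disc $D$ around $t=1$ containing no other critical value; by Proposition \ref{pro:(Z,pi)-prop}, Theorem \ref{thm:compactifying-delta} applies there. Compactly supported cohomology of $Z_s$ equals $H^*(V_s, j_!\QQ)$. Since $\bar\pi$ is proper, proper base change identifies $H^i(V_1,\psi_{\bar\pi}(j_!\QQ))$ with $H^i_c(Z_t,\QQ)$, compatibly with the monodromy $\rho_{\pi,1}$. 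We then use the distinguished triangle
\[
i^*j_!\QQ\to\psi_{\bar\pi}(j_!\QQ)\to\phi_{\bar\pi}(j_!\QQ)\xrightarrow{+1},
\]
where $i\colon V_1\hookrightarrow V$. Here $H^*(V_1,i^*j_!\QQ)=H^*_c(Z_1,\QQ)$, and monodromy acts trivially on this term.

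The key point is to show that $\phi_{\bar\pi}(j_!\QQ)$ is supported only at $x$. Away from $j(Z)$, the boundary $V\setminus j(Z)$ is a normal crossing divisor \emph{over} the base, and by (iii) $\bar\pi$ has no critical points there. Locally $(V,V\setminus Z)\to D$ is therefore a product, and $j_!\QQ$ is locally constant along the fibres of such a product, so its vanishing cycles vanish. On $Z$ itself, $j_!\QQ=\QQ$ and the only critical point of $\pi$ is the ODP $x$ from Proposition \ref{pro:(Z,pi)-prop}(2). Consequently $\phi_{\bar\pi}(j_!\QQ)$ is a skyscraper at $x$ whose stalk cohomology is the reduced cohomology $\tilde H^*(F_x,\QQ)$ of the Milnor fibre.

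For an ODP in a $(d-1)$-dimensional fibre, i.e.\ a Morse critical point of a function on the $d$-dimensional smooth $Z$, Milnor theory gives $F_x\simeq S^{d-1}$. Hence the only nonzero reduced cohomology is $\tilde H^{d-1}(F_x)=\QQ$. (When $d=1$ we use reduced cohomology, which is the convention implicit in the statement.) The long exact sequence of hypercohomology then has only one nonzero vanishing term. Below degree $d-1$ it gives isomorphisms $H^i_c(Z_1)\cong H^i_c(Z_t)$, and in degrees $d-1,d$ it gives exactly \eqref{eq:sequence}. The left zero is automatic, since the vanishing-cycle term in degree $d-2$ is zero. The right zero requires that $H^{d}(F_x)=0$ and that the map $H^{d}_c(Z_1)\to H^{d}_c(Z_t)$ be followed by zero; this holds because the next vanishing term, in degree $d$, is zero.

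For (ii), the maps in the triangle are equivariant for the monodromy action on $\psi$ and $\phi$, with trivial action on $i^*$. Monodromy on $\psi$ computes $\rho_{\pi,1}$ via the proper base change identification. On $\tilde H^{d-1}(F_x)$ the monodromy is the classical Picard--Lefschetz action $(-1)^{d}$, for a Morse singularity in $d$ variables. This gives $-\mathrm{id}$ for $d$ odd and $\mathrm{id}$ for $d$ even.

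The main obstacle is the second step: rigorously justifying that the vanishing cycles of $j_!\QQ$ vanish along the boundary. This needs a local product structure for the pair $(V,V\setminus Z)$ over $D$. We would deduce it from (ii)--(iii) of Theorem \ref{thm:compactifying-delta} via local coordinates in which $\bar\pi$ is a coordinate function and the divisor is a union of coordinate hyperplanes transverse to it, then apply Künneth for $j_!$.
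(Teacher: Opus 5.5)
Your proposal is correct and follows the paper's proof: you take nearby and vanishing cycles of $j_!\QQ_Z$ for the proper compactification $\bar\pi$ and show the vanishing cycles are a skyscraper at the ODP $x$, with stalk $\widetilde{H}^{d-1}(F_x,\QQ)=\QQ$ and Milnor monodromy $(-1)^d$. The only difference is how you justify that $\varphi_{\bar\pi}(j_!\QQ)$ vanishes along the boundary: you argue directly from the local product structure of the relative normal crossing divisor, whereas the paper stratifies $B$ into pieces $B^k_i$ that are smooth over $D$ and cites Dimca's cohomological tameness results.
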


\begin{cor}
\label{cor:if-iso}
If the map 
\begin{equation}
\label{eq:sp_d}
 	H^d_c(Z_1, \QQ) \to H^d_c(Z_t,  \QQ)
 \end{equation}
in the sequence \eqref{eq:sequence} is an isomorphism, then $\rho_{\pi,1}$ is non-trivial.  
\end{cor}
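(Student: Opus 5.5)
We argue by contraposition: assume $\rho_{\pi,1}$ is trivial, i.e.\ $M_1=\mathrm{id}$ on $H^{d-1}_c(Z_t,\QQ)$ and on $H^{d}_c(Z_t,\QQ)$, and deduce that the map \eqref{eq:sp_d} is not an isomorphism. Write the sequence \eqref{eq:sequence} as
\[
0 \to A \xrightarrow{a} B \xrightarrow{b} \QQ \xrightarrow{c} C \xrightarrow{e} D \to 0,
\]
with $A=H^{d-1}_c(Z_1,\QQ)$, $B=H^{d-1}_c(Z_t,\QQ)$, $C=H^d_c(Z_1,\QQ)$, $D=H^d_c(Z_t,\QQ)$ and $\QQ=H^{d-1}(F_x,\QQ)$. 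By exactness, $e$ is an isomorphism if and only if $c=0$, which holds if and only if $b$ is surjective, i.e.\ $b\neq 0$ since the target has rank one. So we must show that triviality of $\rho_{\pi,1}$ forces $b=0$. The compatibility in Proposition \ref{pro:pro-sequence}(ii) gives the relation we use: $b\circ \rho_{\pi,1} = M_1\circ b$ on $B$.

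The argument splits into two cases according to the parity of $d$.

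\emph{Case $d$ odd.} Here $M_1=-\mathrm{id}$ on $H^{d-1}(F_x,\QQ)$. If $\rho_{\pi,1}=\mathrm{id}$ on $B$, then $b = b\circ\rho_{\pi,1} = -b$, so $b=0$. Hence $c$ is injective, $c\neq 0$, and $e$ is not an isomorphism. This is a one-line argument.

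\emph{Case $d$ even.} Here $M_1=\mathrm{id}$ on the Milnor term, so the compatibility relation alone gives no information. We use the finer structure coming from nearby and vanishing cycles: the variation map together with the Picard--Lefschetz formula. The monodromy on $H^{d-1}_c(Z_t)$ has the form $\rho_{\pi,1}(\xi)=\xi \pm \langle \xi,\delta\rangle\,\delta'$, where $b(\xi)$ is, up to sign, the pairing of $\xi$ with the vanishing cycle $\delta$, and $\delta'$ is the image of the generator of the local cohomology (vanishing cycle) in $B$. So $\rho_{\pi,1}-\mathrm{id}$ factors as $B\xrightarrow{b}\QQ\xrightarrow{\mathrm{var}} B$. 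We then show that $\mathrm{var}$ is injective, i.e.\ that the vanishing class is nonzero in $B$. If that holds, then $\rho_{\pi,1}=\mathrm{id}$ forces $b=0$ exactly as in the odd case. Injectivity of $\mathrm{var}$ should follow from dualising the same sequence, which amounts to the cospecialisation sequence for ordinary cohomology. Alternatively, one can use the fact that for an ODP in even relative dimension the self-intersection $\langle\delta,\delta\rangle=\pm2\neq0$: composing $b\circ\mathrm{var}$ gives multiplication by $\pm2$ on $\QQ$, so both $b$ and $\mathrm{var}$ are nonzero. This makes $b\neq0$ automatic, and $\rho_{\pi,1}$ is then a genuine reflection and in particular non-trivial.

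The main obstacle is the even case. The formal relation in Proposition \ref{pro:pro-sequence}(ii) is insufficient there, and one must import the Picard--Lefschetz description of $\rho_{\pi,1}-\mathrm{id}$ through the variation map. One must also check that the relevant intersection form on the compactly supported cohomology of the affine fibre is compatible with the local Milnor-fibre pairing. If that route becomes awkward, I would instead verify $b\neq0$ directly from the hypothesis that \eqref{eq:sp_d} is an isomorphism: that hypothesis already gives $c=0$, hence $b$ surjective. Then $\mathrm{var}\circ b=\rho_{\pi,1}-\mathrm{id}$ is nonzero as long as $\mathrm{var}\neq0$. So the whole corollary reduces to showing that the local variation map is nonzero, which is a purely local statement about the Milnor fibration of an ODP and holds for every $d$.
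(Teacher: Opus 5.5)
Your odd-$d$ case is correct; it is exactly the argument of Remark \ref{rem:even-proof}. The even-$d$ case has a genuine gap at the step you isolate yourself: that $\mathrm{var}\colon \QQ\to H^{d-1}_c(Z_t,\QQ)$ is nonzero, i.e.\ that the vanishing class survives in the compactly supported cohomology of the whole affine fibre. None of your justifications establishes this.

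\begin{itemize}
\item \emph{Self-intersection.} This route fails in exactly this case. For $d$ even the fibre has odd dimension $d-1$, so the intersection form on middle cohomology is skew-symmetric. Hence $\langle\delta,\delta\rangle=0$ and $b\circ\mathrm{var}=0$, consistent with $M_1=\mathrm{id}$ on the Milnor term. Your conclusion ``$b\neq 0$ automatically'' would make \eqref{eq:sp_d} always an isomorphism. That would settle unconditionally the odd-dimensional case the paper explicitly leaves open, which should have been a warning sign.
\item \emph{``Purely local''.} This claim is false. The local variation $H^{d-1}(F_x,\QQ)\to H^{d-1}_c(F_x^\circ,\QQ)$ (with $F_x^\circ$ the open Milnor fibre) is an isomorphism. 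But your $\mathrm{var}$ is that map followed by extension by zero into $H^{d-1}_c(Z_t,\QQ)$. This composite is global and can vanish in general: a separating vanishing cycle in a proper family of curves gives a Dehn twist acting trivially on $H^1$.
\item \emph{Dualising.} Dualising \eqref{eq:sequence} only gives information about $b$, not about $\mathrm{var}$.
\item \emph{Factorisation on the open fibre.} The identity $\rho_{\pi,1}-\mathrm{id}=\mathrm{var}\circ b$ on the non-compact fibre needs the monodromy to be trivial near infinity. You do not justify this; it comes from the relative normal crossing compactification of Theorem \ref{thm:compactifying-delta}.
\end{itemize}

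The missing observation is that you do not need $\mathrm{var}$ to be injective unconditionally. Under the hypothesis, $b\neq 0$, and $b$ is itself (up to sign) the pairing with the same vanishing class that $\mathrm{var}$ produces. So $b\neq0$ forces that class to be nonzero by bilinearity, and then $\rho_{\pi,1}-\mathrm{id}=\mathrm{var}\circ b\neq 0$.

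The paper carries this out on the compact fibre, where both ingredients are classical:
\begin{itemize}
\item Take $\beta$ with $\mathrm{can}(\beta)\neq 0$ and set $\alpha=j_{t\star}\beta\in H^{d-1}(V_t,\QQ)$.
\item Commutativity of \eqref{eq:diagram-Bt-Zt} gives $\mathrm{can}(\alpha)\neq 0$. Since $\mathrm{can}(\alpha)=\pm\langle\alpha,\delta\rangle$, this forces $\delta\neq 0$.
\item Picard--Lefschetz on $V_t$ and compatibility of $j_{t\star}$ with $M_1$ give $j_{t\star}(M_1\beta-\beta)=\mathrm{can}(\alpha)\delta\neq 0$.
\end{itemize}
This argument is uniform in $d$, so no parity split is needed.

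If you prefer to keep your route and prove $\mathrm{var}$ injective unconditionally, the input you are missing is Artin vanishing. In the ordinary-cohomology cospecialisation sequence $H^{d-1}(Z_t,\QQ)\to H^{d-1}(F_x,\QQ)\to H^d(Z_1,\QQ)$, the last group vanishes because $Z_1$ is affine of dimension $d-1$. So restriction to $F_x$ is surjective. By Poincar\'e duality on $Z_t$ and $F_x^\circ$, this is equivalent to injectivity of extension by zero $H^{d-1}_c(F_x^\circ,\QQ)\to H^{d-1}_c(Z_t,\QQ)$.
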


\begin{rem}
\label{eq:can}
By the first part of Proposition \ref{pro:pro-sequence} , the map \eqref{eq:sp_d} is an isomorphism if and only if the map 
$H^{d-1}_c(Z_t,  \QQ) \to H^{d-1}(F_x, \QQ)$ in \eqref{eq:sequence} is not zero. 
Moreover, 
if $\rho_{\pi,1}$ is non-trivial, the latter map is not zero. Indeed, if the map was zero, then, by the exactness of \eqref{eq:sequence}, the map $H^{d-1}_c(Z_1,\QQ) \to H^{d-1}_c(Z_t,\QQ)$ in \eqref{eq:sequence} would be an isomorphism. But then, since this map is compatible with the operators $M_1$, and $M_1$ is the identity on $H^{d-1}_c(Z_1,\QQ)$ and $\rho_{\pi,1}$ on $H^{d-1}_c(Z_t,  \QQ)$, we would obtain that $\rho_{\pi,1}=\mathrm{id}$.    
It follows that the condition of the corollary is equivalent to the non-triviality of $\rho_{\pi,1}$, although we will not use this fact to prove the theorem. 
 \end{rem}
\smallskip

We defer the proofs of Proposition \ref{pro:pro-sequence} and Corollary
\ref{cor:if-iso} to the end of the section. We now prove Theorem \ref{thm:curves-even}. 

\begin{proof}[Proof of Theorem \ref{thm:curves-even}]
By the exactness of the sequence \eqref{eq:sequence}, the map \eqref{eq:sp_d} is surjective. 
We prove that, under assumption (1) or assumption (2), this map is an isomorphism. By Corollary \ref{cor:if-iso}, this ends the proof.

(1) We have that $d=2$. 
We argue that both the domain and the target of the map \eqref{eq:sp_d} have rank one.
Recall that for a curve $X$, the rank of $H^2_c(X, \QQ)$ is the number of its irreducible components.
By Proposition \ref{prop:irreduciblity}, the groups $H^{2}_c(Z_1,\mathbb{Q})$ and $H^{2}_c(Z_t,\mathbb{Q})$  have rank $1$. By surjectivity of \eqref{eq:sp_d}, it follows that \eqref{eq:sp_d} is an isomorphism.

(2) 
We have that $d$ is odd. The map $H^{d-1}(F_x, \QQ) \to H^d_c(Z_1,\QQ)$ in \eqref{eq:sequence} is the zero map.  
Indeed, this map is compatible with the operators $M_1$, and $M_1$ acts non-trivially on its one-dimensional domain and trivially on the target.
By exactness of \eqref{eq:sequence}, the map \eqref{eq:sp_d} is an isomorphism.
\end{proof}

\begin{rem}\label{rem:irreducibility-is-not-enough}
If the varieties $Z_t$ have odd dimension $d-1>1$, then irreducibility alone does not  guarantee that $H^d_c(Z_1, \QQ)$ and $H^d_c(Z_1, \QQ)$ have the same rank (see for instance \cite{Lin20}). 
\end{rem}
\smallskip

We now prove Proposition \ref{pro:pro-sequence} and Corollary \ref{cor:if-iso}. The proofs rely on the formalism of nearby and vanishing cycles. References for this material are \cite{DeligneKatz, Dimca, Sawin, ML, Massey}.

\begin{proof}[Proof of Proposition \ref{pro:pro-sequence}]
Let $\pi \colon Z \to \CC^\times$ be as above. 
Let $D \subset \CC^\times$ be a small analytic neighbourhood of $1 \in \CC^\times$. By abusing notation, we denote by $\pi \colon Z \to D$ the restriction $\pi \colon \pi^{-1}(D) \to D$. 

(i) Since $\pi\colon Z \to D$ is not proper, we first show that it is cohomologically tame with respect to a suitable compactification, in the sense of \cite[Definition 6.2.12]{Dimca}. This property will allow us to use the nearby and vanishing cycle functors of the compactification to derive the sequence \eqref{eq:sequence}.

Let $\bar{\pi} \colon V \to D$ be a fibrewise compactification as in Theorem \ref{thm:compactifying-delta}. Then, the boundary $B\coloneqq V \setminus Z$ is a relative normal crossing divisor. 
We have the following decomposition of $B$ in terms of intersections of its components.
The divisor $B$ is the union of smooth irreducible divisors $B_1, \dots, B_n$. For each subset $I$ of $\{1, \dots, n\}$ define 
\[
B^I \coloneqq \bigcap_{i \in I} B_i
\] Since the intersections of the $B_i$ are transverse, $B^I$ is smooth. 
For each $k \geq 1$, let \[B^k \coloneqq \bigcup_{|I|=k} B^I \setminus \left( \cup_{|I|=k+1} B^I\right) \]
Then $B^k$ is the disjoint union of smooth varieties 
$\{B^k_i\}_{i=1, \dots, n_k}$ which are locally closed in $B$,
and $B$ is the disjoint union of all such varieties
$\{B^k_i\}_{k \geq 1, i=1, \dots, n_k}$.
We obtain a decomposition 
\[ V= Z \cup \bigcup_{k,i} B^k_i
\]
such that, for each $k$ and $i$, the restriction $\bar{\pi}^k_i \coloneqq \bar{\pi}_{|B^k_i}$ is smooth. 
In other words, $\pi$ is cohomologically tame with respect to the $\bar{\pi}$. 

Denote by $Z_s$, $V_s$ the fibre at $s \in D$  of $\pi$, $\bar{\pi}$. Let $t \in D \setminus \{1\}$. Consider the diagram: 
\begin{equation}
 \label{eq:diagram}
  \begin{tikzcd}
Z_1 \rar{g_1}  \dar{j_1} &   Z  \dar{j} & \lar{g_t}   Z_t \dar{j_t}\\
V_1 \rar{\bar{g}_1}  \dar{} &   V \dar{\bar{\pi}}   & \lar{\bar{g}_t}  V_t  \dar{}\\
\{ 1 \}  \rar{} & D & \lar{} \{ t \} 
\end{tikzcd}
\end{equation}
For $f=\pi,\bar{\pi}$, denote by $\psi_{f}$ the nearby cycle functor of $f$ and by $\varphi_{f}$   the vanishing cycle functor of $f$. 
For any complex $\mathcal{F}$ of constructible sheaves on $V$ there is a distinguished triangle on $V_1$:
\begin{equation}
\label{eq:triangle}
 {\bar{g}_1}^\star \mathcal{F} \xrightarrow{sp} \psi_{\bar{\pi}} \mathcal{F} \xrightarrow{can} \varphi_{\bar{\pi}}\mathcal{F}  \xrightarrow{+1}
\end{equation}
where ${sp}$ and ${can}$ denote the specialization morphism and the canonical morphism.
Taking hypercohomology gives the long exact sequence
\begin{equation}
\label{eq:hypercohomology}
    \dots \to \HH^i(V_1, {\bar{g}_1}^\star \mathcal{F}) \to  \HH^i(V_1, \psi_{\bar{\pi}} \mathcal{F}) \to \HH^i(V_1, \varphi_{\bar{\pi}} \mathcal{F}) \to  \HH^{i+1}(V_1, {\bar{g}_1}^\star \mathcal{F}) \to \dots
\end{equation}

For a variety $X$, denote by $\QQ_X$ the constant sheaf on $X$.
Setting $\mathcal{F}= Rj_{!} \QQ_Z$, we have that:
\begin{equation}
\label{eq:pullback}
\HH^i(V_1, {\bar{g}_1}^\star \mathcal{F})= 
\HH^i(V_1,  R_{j_1!}\QQ_{Z_1}) =H^i_c(Z_1, \QQ)
\end{equation}
and 
\begin{equation}
\label{eq:nearby}
\HH^i(V_1, {\psi_{\bar{\pi}} }\mathcal{F})= \HH^i(V_t, {\bar{g}_t}^\star \mathcal{F})=\HH^i(V_t, R_{j_t !} \QQ_{Z_t})=H^i_c(Z_t, \QQ)
\end{equation}
where the first equality in \eqref{eq:pullback} and the second equality in \eqref{eq:nearby} hold by proper base change, and the first equality in \eqref{eq:nearby} follows from \cite[Theorem 4.14]{ML}, since $\bar{\pi}$ is proper. 
Finally, 
\begin{equation}
\label{eq:vanishing}
\begin{aligned}
\HH^i(V_1, {\varphi_{\bar{\pi}} }\mathcal{F})&=\HH^i(V_1, R_{j_1 !} {\varphi_{{\pi}} }\QQ_{Z})= \HH^i(V_1, H^{d-1}(F_x, \QQ)[-(d-1)])\\
& = \left\{ \begin{array}{ll}
H^{d-1}(F_x, \QQ) & \textrm{if} \ i=d-1\\
0 & \textrm{otherwise}
\end{array} \right.
\end{aligned}
\end{equation}
where the first equality holds by cohomological tameness of $\pi$  with respect to $\bar{\pi}$ (see the proof of \cite[Lemma 6.2.14]{Dimca}), and the second equality holds as $x$ is the unique singular point of $Z_1$. 

Then \eqref{eq:hypercohomology} yields  the isomorphisms:
\begin{equation} 
\label{eq:isomorps} 
H^i(Z_1, \QQ) \xrightarrow{\simeq} H^i(Z_t, \QQ) \quad \text{for} \ i \neq d-1,d
\end{equation}
and the exact sequence \eqref{eq:sequence}.
Moreover, since $x \in Z_1$ is an ordinary double point, one has that
$H^{d-1}(F_x, \QQ)\simeq \QQ$.

(ii) The functors $\psi_{\bar{\pi}}$ and $\varphi_{\bar{\pi}}$  
are endowed with monodromy transformations which induce the trivial action on
$H^i_c(Z_1, \QQ)$, the action of $\rho_{\pi,1}$ on $H^i_c(Z_t, \QQ)$, and the Milnor monodromy action on $\widetilde{H}^{d-1}(F_x, \QQ)$. Moreover, \eqref{eq:isomorps} and \eqref{eq:sequence} are compatible with these actions (see \cite[Section 4.2]{Dimca}). Since $x \in Z_1$ is an ordinary double point, the Milnor monodromy acts as $-\mathrm{id}$ if  $Z_1$ is even-dimensional, as $\mathrm{id}$ if $Z_1$ is odd-dimensional (see \cite[Section 3.3]{Dimca}). 

\end{proof}

\begin{rem}
\label{rem:more-ODPs} If $Z_1$ had more than a singular point, but only isolated singularities, the same argument would yield the sequence:
\[ 
0 \to H^{d-1}_c(Z_1, \QQ) \to H^{d-1}_c(Z_t, \QQ) \to \bigoplus_{x \in \mathrm{S}(Z_1) } H^{d-1}(F_x, \QQ) \to H^d_c(Z_1, \QQ) \to H^d_c(Z_t, \QQ) \to 0
\] where $\mathrm{S}(Z_1)$ is the set of singular points of $Z_1$. 
\end{rem}

\begin{proof}[Proof of Corollary \ref{cor:if-iso}]
We continue to use the notation of the previous proof. 
Let $i \colon B \to V$ be the inclusion. There is a distinguished triangle:
\begin{equation}
\label{eq:B-triangle}
Rj_{!}  j^\star \QQ_V = Rj_{!}  \QQ_Z \xrightarrow{} \QQ_V \xrightarrow{} i_\star i^\star \QQ_V= i_\star \QQ_B \xrightarrow{+1}	
\end{equation}
Combining it with the triangle \eqref{eq:triangle}, we obtain the diagram: 
\begin{equation}
    \label{eq:diagram-triangles}
\begin{tikzcd}
\textcolor{white}{---} & \textcolor{white}{---} & \textcolor{white}{---}& \textcolor{white}{---}\\
{\bar{g}_1}^\star i_\star \QQ_B \rar{}  \uar{+1}  &    \psi_{\bar{\pi}} i_\star \QQ_B  \rar{} \uar{+1} & \varphi_{\bar{\pi}}  i_\star \QQ_B \rar{+1}  \uar{+1} & \textcolor{white}{---}\\
{\bar{g}_1}^\star \QQ_V  \rar{}  \uar{} &   \psi_{\bar{\pi}} \QQ_V \rar{} \uar{} & \varphi_{\bar{\pi}} \QQ_V \rar{+1}  \uar{}  & \textcolor{white}{---} \\
{\bar{g}_1}^\star Rj_{!}  \QQ_Z  \rar{}   \uar{}& \psi_{\bar{\pi}} Rj_{!}  \QQ_Z  \rar{} \uar{ }& \varphi_{\bar{\pi}} Rj_{!}  \QQ_Z \rar{+1} \uar{} & \textcolor{white}{---} \\
\end{tikzcd}
\end{equation}
Since $\pi$ is cohomologically tame with respect to $\bar{\pi}$, we have that the morphism $\varphi_{\bar{\pi}} Rj_{!}\QQ_Z  \to \varphi_{\bar{\pi}} \QQ_V$ in the last column is an isomorphism 
\cite[Theorem 6.2.15]{Dimca}. It follows that 
$\varphi_{\bar{\pi}}  i_\star \QQ_B =0$. 

Let $\bar{\pi}_B\coloneqq \bar{\pi}_{|B}$ be the restriction of $\bar{\pi}$ to $B$. Write $B_t \coloneqq {\bar{\pi}_B}^{-1}(t)$ and denote by $i_t \colon B_t \to V_t$ the inclusion. 
Applying hypercohomology to the diagram above, we obtain the following commutative diagram:
\begin{equation}
\label{eq:diagram-Bt-Zt}
\begin{tikzcd}
\textcolor{white}{---} & \textcolor{white}{---} & \textcolor{white}{---} & \textcolor{white}{---}\\
0 \rar & H^{d-1}(B_1, \QQ) \rar{\simeq}  \uar{+1}  &    H^{d-1}(B_t,\QQ)  \rar{} \uar{+1} & 0 \rar{+1}  \uar{+1} & \textcolor{white}{---}\\
0 \rar & H^{d-1}(V_1, \QQ) \rar{}  \uar{} &    H^{d-1}(V_t, \QQ)  \rar{can} \uar{{i_{t}}^{\star}} & \QQ \rar{+1} \uar{} & \textcolor{white}{---} \\
0 \rar & H_c^{d-1}(Z_1, \QQ) \rar{}   \uar{} &    H_c^{d-1}(Z_t, \QQ)  \rar{can} \uar{{j_t}_\star} & \QQ \rar{+1} \uar{\simeq} & \textcolor{white}{---} \\
\textcolor{white}{---}   & \textcolor{white}{---} \uar{} & \textcolor{white}{---} \uar{} &  \textcolor{white}{---} \uar{} & \textcolor{white}{---} 
\end{tikzcd}
\end{equation}
where the two maps denoted by $can$ are induced by the canonical morphism in \eqref{eq:triangle}. 

As before, each term of the sequences carries a monodromy action, and the diagram is compatible with these actions. 
As in the statement of Proposition \ref{pro:pro-sequence}, 
we denote all the monodromy operators uniformly by $M_1$. 
For $x \in H^{d-1}(V_t, \QQ)$ we have that $can(x)=\pm \langle x, \delta \rangle$,  where the sign $\pm$ is determined by dimension of $V_t$,  $\delta \in H^{d-1}(V_t, \QQ)$ is the class of the vanishing cycle, and $\langle \cdot, \cdot \rangle$ is the intersection pairing, 
and the action of $M_1$ on $H^{d-1}(V_t, \QQ)$ is given by the Picard--Lefschetz formula
\begin{equation} 
\label{eq:PL} M_1(x) = x +can(x) \delta 
\end{equation} see \cite[Section 3.3]{Dimca92}.

The sequence in the bottom row of \eqref{eq:diagram-Bt-Zt} is \eqref{eq:sequence}. Assume that the map \eqref{eq:sp_d} is an isomorphism. Then
the map ${can} \colon H^{d-1}_c(Z_t, \QQ) \to \QQ $ is not zero. Let $\beta \in H^{d-1}_c(Z_t, \QQ)$ be such that $\mathrm{can}(\beta) \neq 0$ and let $\alpha={j_{t}}_{\star} (\beta)$.  By the commutativity of the diagram, we have that $\mathrm{can}(\alpha) \neq 0$ (in particular $\alpha \neq 0$ and $\delta \neq 0$). Moreover, by exactness of the sequence in the central column of \eqref{eq:diagram-Bt-Zt}, we have that ${i_{t}}^{\star} (\alpha)=0$.
Then: 
\[ 0= M_1({i_{t}}^{\star}  (\alpha))= {i_{t}}^{\star} (M_1(\alpha))={i_{t}}^{\star}  \left(\alpha + \mathrm{can}(\alpha) \delta\right)
=\mathrm{can}(\alpha) {i_{t}}^{\star} (\delta) 
\]
where the second equality holds by the compatibility of the diagram with monodromy and the third equality holds by \eqref{eq:PL}. It follows that ${i_{t}}^{\star}(\delta)=0$. This implies that there this a non-trivial class $\epsilon \in H^{d-1}_c(Z_t, \QQ)$ such that $j_{t \star}(\epsilon)=\delta$. Now
\[ {j_{t}}_{\star}(M_1(\beta))=M_1(\alpha)=\alpha + \mathrm{can}(\alpha)\delta= {j_{t}}_{\star}(\beta+\mathrm{can}(\alpha)\epsilon) \] 
Hence
\begin{equation}
\label{eq:Z-PL}
M_1(\beta)=\beta + \mathrm{can}(\beta) \epsilon +\zeta
\end{equation} for some $\zeta$ in the kernel of ${j_{t}}_{\star}$. It follows that $M_1$ acts non-trivially on $H^{d-1}_c(Z_t, \QQ)$. This ends the proof.
\end{proof}

We end the section with a few remarks.

\begin{rem}
\label{rem:even-proof}	
If $Z_t$ is even-dimensional, Corollary \ref{cor:if-iso} has an easier proof. Indeed,
proving the corollary is the same as showing that, if $\rho_{\pi,1}$ is trivial, then $can \colon H^{d-1}_c(Z_t, \QQ) \to \QQ$ is zero. But this follows immediately from the second part of Proposition \ref{pro:pro-sequence}. Indeed, $can$ is compatible with $M_1$, and $M_1=\rho_{\pi,1}$ on  $H^{d-1}_c(Z_t, \QQ)$,  $M_1 =- \mathrm{id}$ on $\QQ$.
\end{rem}

\begin{rem}
\label{rem:consequences}	
The first part of the proof of the corollary shows that $M_1$ acts trivially on $H^{l}(B_t, \QQ)$ for any $l$ (without any assumption on the map \eqref{eq:sp_d}). 
The second part of the proof shows that, for $\beta \in H^{d-1}_c(Z_t, \QQ)$, 
\[can(\beta)\neq 0 \implies M_1(\beta) \neq \beta\]
which is a stronger statement than the one in the corollary. 
In particular this yields an invariant-cycle theorem for $Z_t$, i.e., the image of  $H^{d-1}_c(Z_1, \QQ)$ is the $\rho_{\pi,1}$-invariant subspace of
$H^{d-1}_c(Z_t, \QQ)$.
\end{rem}

\begin{rem}
\label{rem:more-generality}
    It is clear from the our proofs that Proposition \ref{pro:pro-sequence} and Corollary \ref{cor:if-iso} hold more generally for morphisms $\pi \colon Z \to \CC^\times$ with isolated critical values and such that the fibre over one such critical value $s$ has a unique ordinary double point and the restriction of $\pi$ over a small analytic neighbourhood of $s$ is cohomologically tame. 
    The same is true for Theorem \ref{thm:curves-even} (assuming irreducibility of fibres in case (1)). 
\end{rem}

\section*{Appendix.  Computer algebra in an example}

Throughout the paper, we have considered the example of the gamma vector $\gamma=(-5,-2,3,4)$. In Example \ref{exa:curve-cone}, we provided a basis $\omega_1, \omega_2, \omega_3, \omega_4$ for the weighted piece $W_3\mathcal{H}^2(\widetilde{Z}_U/U)$. 

Using computer algebra, for each class $\omega_i$, we computed the minimal differential operator that annihilates it. This operator is, as expected, an irreducible hypergeometric differential operator with parameters $\alpha, \beta$ corresponding to $\gamma$.
Below, we collect the four classes and the corresponding operators.

We note that the parameters associated with any of the two classes are the same modulo $\Z$ but are not identical.

We also note that, for each $\alpha,\beta$ in Table \ref{tab:computer-algebra},  the hypergeometric function 
\[
F\biggl(\begin{matrix} \alpha_1, \ \ldots, \ \alpha_4 \\ \beta_1, \ \ldots, \ \beta_4 \end{matrix} \:\bigg|\:\: t \biggr)
\] is not annihilated by the hypergeometric operator $H(\alpha, \beta)$.

\begin{table}[ht!]
\centering
\renewcommand{\arraystretch}{2.4} 
\setlength{\tabcolsep}{12pt}
\begin{tabular}{|c|c|c|}
\hline
\text{Form} & $\alpha$ & $\beta$ \\
\hline
$w_1=\dfrac{x_1 x_2}{f}\dfrac{dx}{x}$
&
$\left( \dfrac{2}{5}, \dfrac{3}{5}, \dfrac{4}{5}, \dfrac{6}{5} \right)$
&
$\left( \dfrac{2}{3}, \dfrac{3}{4}, \dfrac{5}{4}, \dfrac{4}{3} \right)$
\\
\hline

$w_2=\dfrac{x_1 x_2^2}{f}\dfrac{dx}{x}$
&
$\left( \dfrac{4}{5}, \dfrac{6}{5}, \dfrac{7}{5}, \dfrac{8}{5} \right)$
&
$\left( \dfrac{5}{4}, \dfrac{4}{3}, \dfrac{5}{3}, \dfrac{7}{4} \right)$
\\
\hline

$w_3=\dfrac{x_1^2 x_2^2}{f^2}\dfrac{dx}{x}$
&
$\left( \dfrac{4}{5}, \dfrac{6}{5}, \dfrac{7}{5}, \dfrac{8}{5} \right)$
&
$\left( \dfrac{3}{4}, \dfrac{5}{4}, \dfrac{4}{3}, \dfrac{5}{3} \right)$
\\
\hline

$w_4=\dfrac{x_1^2 x_2^4}{f}\dfrac{dx}{x}$
&
$\left( \dfrac{8}{5}, \dfrac{9}{5}, \dfrac{11}{5}, \dfrac{12}{5} \right)$
&
$\left( \dfrac{5}{3}, \dfrac{7}{4}, \dfrac{9}{4}, \dfrac{7}{3} \right)$
\\
\hline
\end{tabular}
\caption{\label{tab:computer-algebra} The four generators $w_i$ of $W_3\mathcal{H}^2(\widetilde{Z}_U/U)$ and the parameters $\alpha,\beta$ of the corresponding irreducible hypergeometric operators.}
\end{table}

\bibliographystyle{amsalpha}
\bibliography{bib-aagg}
\end{document}